\numberwithin{equation}{subsection}
\newtheorem{lemma}{Lemma}[section]
\newtheorem{proposition}[lemma]{Proposition}
\newtheorem{theorem}[lemma]{Theorem}
\newtheorem{corollary}[lemma]{Corollary}
\newtheorem*{theorem2}{Theorem}
\theoremstyle{definition}
\newtheorem{definition}[lemma]{Definition}
\newtheorem{remark}[lemma]{Remark}
\def\C{{\mathbb C}}
\def\F{{\mathbb F}}
\def\Z{{\mathbb Z}}
\def\Aa{\mathcal{A}}
\def\Cc{\mathcal{C}}
\def\Gg{\mathcal{G}}
\def\Ii{\mathcal{I}}
\def\Ll{\mathcal{L}}
\def\AA{\mathfrak{A}}
\def\BB{\mathfrak{B}}
\def\CC{\mathfrak{C}}
\def\HH{\mathfrak{H}}
\def\JJ{\mathfrak{J}}
\def\KK{\mathfrak{K}}
\def\PP{\mathfrak{P}}
\def\QQ{\mathfrak{Q}}
\def\SS{\mathfrak{S}}
\def\GGG{\mathbf{G}}
\def\KKK{\mathbf{K}}
\def\NNN{\mathbf{N}}
\def\XXX{\mathbf{X}}
\def\kkk{\mathbf{k}}
\def\ker{\operatorname{ker}}
\def\Hom{\operatorname{Hom}}
\def\End{\operatorname{End}}
\def\Aut{\operatorname{Aut}}
\def\Ind{\operatorname{Ind}}
\def\cInd{c\text{-}\operatorname{Ind}}
\def\Res{\operatorname{Res}}
\def\Gal{\operatorname{Gal}}
\def\Irr{\mathrm{Irr}}
\def\Mat{\mathrm{Mat}}
\def\GL2{\mathbf{GL}_2}
\def\SL2{\mathbf{SL}_2}
\def\GLN{\mathbf{GL}_N}
\def\SLN{\mathbf{SL}_N}
\def\Rep{\mathrm{Rep}}
\def\rec{\mathrm{rec}}
\def\iner{\mathrm{iner}}
\def\sc{\mathrm{sc}}
\newcommand{\dedekind}{{\scriptstyle\mathcal{O}}}
\def\et{\mathrm{et}}
\renewcommand\subsection{\@startsection {subsection}{1}{\z@}%
                                        {-3.5ex \@plus -1ex \@minus -.2ex}%
                                        {2.3ex \@plus.2ex}%
                                        {\normalfont\textit}}
\begin{document}
\newtheorem*{recall}{Recall}
\newtheorem*{notat}{Notations}
\newtheorem*{problem}{Problem}
\newtheorem*{fact}{Fact}

\title{On the unicity of types in special linear groups}
\author{Peter Latham}
\address{Peter Latham\newline Department of Mathematics\newline King's College\newline Strand\newline London\newline WC2R 2LS\newline United Kingdom}
\email{peter.latham@kcl.ac.uk}

\begin{abstract}
Let $F$ be a non-archimedean local field. We show that any representation of a maximal compact subgroup of $\SLN(F)$ which is typical for an essentially tame supercuspidal representation must be induced from a Bushnell--Kutzko maximal simple type. From this, we explicitly count and describe the conjugacy classes of such typical representations, and give an explicit description of an inertial Langlands correspondence for essentially tame irreducible $N$-dimensional projective representations of the Weil group of $F$.
\end{abstract}
\maketitle
\setlength{\parindent}{0pt}

\section{Introduction}
Let $\GGG$ be a connected reductive group defined over a non-archimedean local field $F$ with ring of integers $\dedekind$, and let $G=\GGG(F)$. Given a supercuspidal representation $\pi$ of $G$, we say that a \emph{type} for $\pi$ is a pair $(J,\lambda)$ consisting of an irreducible representation $\lambda$ of a compact open subgroup $J$ of $G$ such that the only irreducible representations of $G$ which contain $\lambda$ upon restriction to $J$ are the twists of $\pi$ by an unramified character of $G$.\\

In many cases, including those of $\GGG=\GLN$ and $\GGG=\SLN$ with which this paper will be concerned, it is known that there exists a type for every supercuspidal representation of $G$ \cite{bushnell1993admissible,bushnell1993sln,bushnell1994sln}; this construction of types is completely explicit, and results in a unique conjugacy class of \emph{maximal simple types} which are contained in $\pi$. These maximal simple types are defined from \emph{strata} (a very specific equivalence class of such strata; see section \ref{BKsection}), which are essentially the data of a hereditary $\dedekind$-order $\AA$ in $\Mat_N(F)$ and an algebraic extension $E/F$ of degree dividing $N$. The order $\AA$ has a lattice period $e_\AA$, which coincides with the ramification degree of the extension $E/F$. For a supercuspidal representation $\pi$ of $\GLN(F)$ or $\SLN(F)$, we denote by $e_{\pi}$ the lattice period of the associated hereditary order.\\

In this paper, we complete the classification of types for a large class of supercuspidal representations of $\SLN(F)$ -- those which are \emph{essentially tame}, which is to say those supercuspidal representations $\pi$ for which $e_\pi$ is coprime to $N$. We show that the only types for such a representation are the maximal simple types, together with those types obtained from simple representation theoretic renormalizations of maximal simple types.

\begin{theorem2}
Let $\pi$ be an essentially tame supercuspidal representation of $\SLN(F)$. Then the number of $\SLN(F)$-conjugacy classes of types $(K,\tau)$ for $\pi$ with $K\subset\SLN(F)$ a maximal compact subgroup is precisely $e_\pi$, and any two such types for $\pi$ are conjugate by an element of $\GLN(F)$. Each of these types is of the form $\tau=\Ind_J^K\ \mu$ for $(J,\mu)$ a maximal simple type contained in $\pi$.
\end{theorem2}

This generalizes the previous result of Paskunas \cite{paskunas2005unicity} (which is due to Henniart for $N=2$ \cite{breuil2002multiplicites}) that any supercuspidal representation $\pi$ of $\GLN(F)$ contains a unique conjugacy class of types defined on maximal compact subgroups, as well as subsuming a previous result of the author for $N=2$ and $F$ of odd residual characteristic \cite{latham2015sl2}. We note that while it should be expected that the result is true without the assumption that $\pi$ is essentially tame, there are some serious arithmetic difficulties which arise if one drops this assumption (namely, for non-essentially tame supercuspidals it is possible for a maximal simple type to intertwine with its twist by some character of large level; our method of proof seems to be poorly suited to dealing with this problem).\\

We also give an application of this result, which explicitly describes an inertial form of the local Langlands correspondence for essentially tame projective Galois representations. Let $I_F\subset W_F$ denote the inertia and Weil group of some separable algebraic closure $\bar{F}/F$.

\begin{theorem2}
There exists a canonical surjective, finite-to-one map $\iner$ from the set of $\SLN(F)$-conjugacy classes of types $(K,\tau)$ for essentially tame supercuspidal representations of $\SLN(F)$ with $K\subset\SLN(F)$ maximal compact, and the set of equivalence classes of $N$-dimensional projective representations of $I_F$ of $F$ which extend to an essentially tame $L$-parameter for $\SLN(F)$.\\

Given an essentially tame irreducible projective representation $\varphi:W_F\rightarrow\mathbf{PGL}_N(\C)$, let $\Pi$ be the $L$-packet of supercuspidal representations of $\SLN(F)$ associated to $\varphi$, and let $\pi\in\Pi$. Then the fibre of $\iner$ above $\varphi|_{I_F}$ is of cardinality $e_\pi\cdot|\Pi|$.
\end{theorem2}

\subsection{Acknowledgements}
The work contained in this paper was supported by an EPSRC studentship as well the Heilbronn Institute for Mathematical Research, and is based on a part of my UEA PhD thesis; I would like to thank Shaun Stevens for his supervision. I am also grateful to Colin Bushnell for pointing out a mistake in a previous draft of the paper, and to Maarten Solleveld for a number of helpful comments.

\section{Notation}

Let $F$ be a non-archimedean local field with ring of integers $\dedekind=\dedekind_F$, maximal ideal $\frak{p}=\frak{p}_E$ and residue field $\kkk=\kkk_E$ of cardinality $q_F$ and characteristic $p$. We write $G$ for the group $\GLN(F)$ and $\bar{G}=\SLN(F)$. Given $H\subset G$ a closed subgroup, we let $\bar{H}=H\cap\bar{G}$.\\

All conjugacies taking place in the paper will be from the left action; for $x\in H\subset G$ and $g\in G$, we write $^gx=gxg^{-1}$, and given a representation $\sigma$ of $H$, we let $^g\sigma$ be the representation of $^gH$ which acts as $^g\sigma({}^gx)=\sigma(x)$.\\

All representations under consideration will be defined over the complex numbers. For a group $H$, we denote by $\Rep(H)$ the category of smooth representations of $H$, and by $\Irr(H)$ the set of isomorphism classes of irreducible objects in $\Rep(H)$. Any representations we consider will be assumed to be smooth.\\

We denote by $\XXX(F)$ the group of complex characters  $\chi:F^\times\rightarrow\C^\times$, and also fix notation for two subgroups of this. We write $\XXX_{\mathrm{nr}}(F)$ for the subgroup of characters $\chi$ which are unramified, i.e. for which $\chi|_{\dedekind^\times}$ is trivial, and $\XXX_N(F)$ for the subgroup of characters $\chi$ for which $\chi^N$ is unramified.\\

Given subgroups $J,J'$ of $G$ and irreducible representations $\lambda,\lambda'$ of $J$ and $J'$, respectively, we denote the intertwining of $(J,\lambda)$ and $(J',\lambda')$ by $I_G(\lambda,\lambda')=\{g\in G\ | \ \Hom_{J\cap{}^gJ'}(\lambda,{}^g\lambda')\neq 0\}$.

\section{The Bushnell--Kutzko theory}\label{BKsection}

We begin by recalling the necessary background on the theory of types, which will underlie all of the work in this paper. We make no attempt to be comprehensive; the reader should consult \cite{bushnell1993admissible,bushnell1993sln,bushnell1994sln} for a complete account.

\subsection{Strata}

Let $V$ be an $N$-dimensional $F$-vector space, and let $A=\End_F(V)$. Then $A^\times=\Aut_F(V)\simeq G$. We also fix, once and for all, a level 1 additive character $\psi$ of $F$, i.e. a character trivial on $\frak{p}$ but not on $\dedekind$.\\

A \emph{hereditary $\dedekind$-order} in $A$ is an $\dedekind$-order $\AA$ such that every left $\AA$-lattice is $\AA$-projective. Given such an order $\AA$, let $\PP=\PP_\AA$ denote its Jacobson radical; thus $\PP$ is a two-sided invertible fractional ideal of $\AA$, and there exists a unique integer $e_\AA=e_{\AA/\dedekind_F}$ called the \emph{lattice period} of $\AA$ such that $\varpi\AA=\PP^{e_\AA}$.\\

To a hereditary order $\AA$, we associate a number of subgroups. Firstly, let $\KK_\AA=\{x\in G\ | ^x\AA=\AA\}$, which we call the \emph{normalizer} of $\AA$. This is an open, compact-modulo-centre subgroup of $G$ which contains as its unique maximal compact subgroup the group $U_\AA:=\AA^\times$. This group $U_\AA$ admits a filtration by compact open subgroups, given by $U_\AA^k=1+\PP^k$, for $k\geq 1$. Each $U_\AA^k$ is normalized by $\KK_\AA$.\\

Via $\AA$, we may put a valuation on $A$ by setting $v_\AA(x)=\mathrm{max}\{n\in\Z\ | \ x\in\PP^n\}$, where we take $v_\AA(0)=\infty$.\\

A \emph{stratum} in $A$ is a quadruple $[\AA,n,r,\beta]$ consisting of a hereditary $\dedekind$-order $\AA$, integers $n>r\geq 0$, and $\beta\in A$ an element such that $v_\AA(\beta)\geq -n$. Such a stratum defines a character $\psi_\beta$ of $U_\AA^{r+1}/U_\AA^{n+1}$ by $\psi_\beta(x)=\psi\circ\mathrm{tr}(\beta(x-1))$. We say that two strata $[\AA,n,r,\beta]$ and $[\AA,n',r,\beta']$ are equivalent if the cosets $\beta+\PP_\AA^{-r}$ and $\beta'+\PP_\AA^{-r}$ are equal.\\

We will be specifically interested in certain classes of strata. Say that a stratum $[\AA,n,r,\beta]$ is \emph{pure} if $E:=F[\beta]$ is a field, $E^\times\subset\KK_\AA$ and $n=-v_\AA(\beta)$. We say that a pure stratum is \emph{simple} if it satisfies a further technical condition $r<-k_0(\beta,\AA)$; see \cite[(1.4.5)]{bushnell1993admissible}.\\

Given a simple stratum $[\AA,n,r,\beta]$, we may consider $V$ as an $E$-vector space. This leads to an $E$-algebra $B_\beta=\End_E(V)$ and a hereditary $\dedekind_E$-order $\BB_\beta=\AA\cap B_\beta$ in $B_\beta$ with Jacobson radical $\QQ_\beta=\PP\cap B_\beta$. As before, we may consider $KK_{\BB_\beta},U_{\BB_\beta}$ and $U_{\BB_\beta}^k$. One then has $U_{\BB_\beta}=U_\AA\cap B_\beta$, $U_{\BB_\beta}^k=U_\AA^k\cap B_\beta$ and $e_{\BB_\beta/\dedekind_E}e(E/F)=e_{\AA/\dedekind_F}$.

\subsection{Tame corestriction}

Let $E/F$ be a field extension contained in $A$, with $B=\End_E(V)$. A \emph{tame corestriction} on $A$ relative to $E/F$ is a $(B,B)$-bimodule homomorphism $s:A\rightarrow B$ such that $s(\AA)=\AA\cap\BB$ for every hereditary $\dedekind$-order $\AA$ in $A$ such that $E^\times\subset\KK_\AA$. If $\psi_E$ is an additive character of $E$ of level 1, then there exists a unique tame corestriction $s:A\rightarrow B$ relative to $E/F$ satisfying $\psi\circ\mathrm{tr}_{A/F}(ab)=\psi_E\circ\mathrm{tr}_{B/E}(s(a)b)$ for all $a\in A$ and all $b\in B$ \cite[(1.3.4)]{bushnell1993admissible}.\\

This allows one to approximate strata over a series of various extensions $E=F[\beta]$. We use the following results, which are the content of \cite[(2.4.1)]{bushnell1993admissible}.\\

Firstly, if $[\AA,n,r,\beta]$ is a pure stratum in $\AA$ then there exists a simple stratum $[\AA,n,r,\gamma]$ in $A$ equivalent to $[\AA,n,r,\beta]$. Then if $[\AA,n,r,\beta]$ is pure with $r=-k_0(\beta,\AA)$, then choosing an equivalent simple stratum $[\AA,n,r,\gamma]$ and a tame corestriction $s_\gamma$ on $A$ relative to $F[\gamma]/F$, the pure stratum $[\BB_\gamma,r,r-1,s_\gamma(\beta-\gamma)]$ is equivalent to some simple stratum in $B_\gamma$.

\subsection{Simple characters}

Given a simple stratum $[\AA,n,r,\beta]$, we may define subrings $\HH(\beta,\AA)$ and $\JJ(\beta,\AA)$ of $\AA$; the unit groups $H(\beta,\AA)=\HH(\beta,\AA)^\times$ and $J(\beta,\AA)=\JJ(\beta,\AA)^\times$ of these rings are compact open subgroups of $U_\AA$. The definitions are rather complicated; see \cite[(3.1.14)]{bushnell1993admissible}. Moreover, $H(\beta,\AA)$ is a compact open normal subgroup of $J(\beta,\AA)$, and if we write for each $m\geq 1$ $H^m(\beta,\AA)=H(\beta,\AA)\cap U_\AA^m$ and $J^m(\beta,\AA)=J(\beta,\AA)\cap U_\AA^m$, then we have a chain of inclusions of normal, compact open subgroups
\[H^m(\beta,\AA)\subset J^m(\beta,\AA)\subset J^{m-1}(\beta,\AA).
\]
These groups $H^m(\beta,\AA)$ admit a rather special class of characters known as \emph{simple characters}. Again, the definitions are technical; see \cite[(3.2)]{bushnell1993admissible}. We simple note that, for each simple stratum $[\AA,n,r,\beta]$ and each integer $m\geq 0$, one obtains a set $\Cc(\AA,m,\beta)$ of simple characters of $H^{m+1}(\beta,\AA)$, satisfying a number of desirable properties. Key among these is the ``intertwining implies conjugacy'' property: if $\theta\in\Cc(\AA,m,\beta)$ and $\theta'\in\Cc(\AA,m',\beta')$ are such that $I_G(\theta,\theta')\neq\emptyset$, then $m=m'$ and there exists a $g\in G$ such that $\Cc(\AA',m,\beta')=\Cc({}^g\AA,m,{}^g\beta)$ and $\theta'={}^g\theta$ \cite[(3.5.11)]{bushnell1993admissible}.\\

Of particular interest to us is the case that $m=0$. Here, we have the following:

\begin{theorem}[\cite{bushnell2013intertwining}]
Let $\pi$ be a supercuspidal representation of $G$. Then there exists a simple stratum $[\AA,n,0,\beta]$ and a simple character $\theta\in\Cc(\AA,0,\beta)$ such that $\pi$ contains $\theta$. The simple character $\theta$ is uniquely determined up to $G$-conjugacy.
\end{theorem}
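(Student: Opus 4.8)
The plan is to deduce this from the analogous classification of simple characters of $\GLN(F)$ together with the intertwining-implies-conjugacy property already recorded above. First I would recall that, by the construction of maximal simple types in \cite{bushnell1993admissible}, every supercuspidal representation $\pi$ of $G$ contains some maximal simple type $(J(\beta,\AA),\lambda)$, which in turn is built (by induction and then restriction along $H^1(\beta,\AA)\subset J(\beta,\AA)$) on top of a simple character $\theta\in\Cc(\AA,0,\beta)$ occurring in $\pi$; this gives existence of the pair $[\AA,n,0,\beta]$, $\theta$. For the uniqueness statement, suppose $[\AA,n,0,\beta]$ and $[\AA',n',0,\beta']$ are simple strata with $\theta\in\Cc(\AA,0,\beta)$ and $\theta'\in\Cc(\AA',0,\beta')$ both contained in $\pi$. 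The key point is that both $\theta$ and $\theta'$ then appear in the same irreducible $\pi$, so $\Hom_G(\cInd_{H^1}^G\theta,\pi)\neq 0$ and likewise for $\theta'$; using that $\pi$ is admissible and irreducible, any two subrepresentations of $\pi|_{H^1(\beta,\AA)\cap\ {}^gH^1(\beta',\AA')}$ for a suitable $g$ force $I_G(\theta,\theta')\neq\emptyset$. Concretely, pick vectors $v,v'$ in the $\theta$- and $\theta'$-isotypic subspaces of $\pi$; by irreducibility there is $g\in G$ with $\langle \pi(g)v',v^\vee\rangle\neq 0$ for some $v^\vee$, and unwinding the equivariance shows $\Hom_{H^1(\beta,\AA)\cap{}^gH^1(\beta',\AA')}(\theta,{}^g\theta')\neq 0$, i.e. $g\in I_G(\theta,\theta')$.

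Once $I_G(\theta,\theta')\neq\emptyset$ is established, the intertwining-implies-conjugacy theorem for simple characters \cite[(3.5.11)]{bushnell1993admissible} (quoted in the excerpt) applies directly: it yields $0=m=m'$ and an element $h\in G$ with $\Cc(\AA',0,\beta')=\Cc({}^h\AA,0,{}^h\beta)$ and $\theta'={}^h\theta$. Thus $\theta$ is determined up to $G$-conjugacy, which is exactly the assertion. I would phrase the argument so that the simple stratum is only claimed ``up to the equivalence that defines $\Cc$'', since a simple character does not determine $[\AA,n,0,\beta]$ on the nose — only its endo-class and the order $\AA$ up to conjugacy — but the theorem statement asks precisely for this.

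The main obstacle is the existence half, not the uniqueness half: showing that a supercuspidal $\pi$ contains \emph{some} simple character requires knowing that $\pi$ contains a maximal simple type, which is the content of the exhaustion theorem of \cite{bushnell1993admissible} for $\GLN$. I would cite this rather than reprove it, but one must be a little careful: the cited source \cite{bushnell2013intertwining} presumably gives a cleaner, more direct route to the existence of $\theta$ (via the structure of the restriction of $\pi$ to the relevant compact subgroups, or via a Hecke-algebra argument), so I would follow that reference's strategy for the existence clause and only use the classical intertwining-implies-conjugacy machinery for the uniqueness clause. A secondary subtlety is bookkeeping with unramified twists: replacing $\pi$ by $\pi\otimes\chi$ for $\chi\in\XXX_{\mathrm{nr}}(F)$ does not change the restriction to any $U_\AA^1$-containing compact subgroup, hence does not change the associated simple character, so the conjugacy class of $\theta$ is genuinely an invariant of the inertial class — worth a remark but not an obstruction.
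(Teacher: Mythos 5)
The paper does not prove this statement at all: it is imported verbatim from Bushnell--Henniart \cite{bushnell2013intertwining}, so there is no internal proof to compare yours against; what matters is whether your argument stands on its own. The existence half does: it is exactly the exhaustion theorem of \cite{bushnell1993admissible}, and citing it is the standard move. But you have the difficulty backwards when you call existence the main obstacle --- the real content, and the reason this paper cites \cite{bushnell2013intertwining} rather than \cite{bushnell1993admissible}, is the uniqueness clause.

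Your uniqueness step has a genuine gap. The intertwining-implies-conjugacy theorem (3.5.11) of \cite{bushnell1993admissible} concerns simple characters attached to the \emph{same} hereditary order $\AA$ (the conjugating element lies in $U_\AA$); the looser paraphrase quoted in this paper should not be read as allowing arbitrary $\AA\neq\AA'$, because in that generality the statement is false: a simple character and its transfer to a stratum of different lattice period are endo-equivalent, hence intertwine, yet cannot be conjugate since the groups $H^1(\beta,\AA)$ and $H^1(\beta',\AA')$ need not even be conjugate. So from $I_G(\theta,\theta')\neq\emptyset$ alone you cannot conclude conjugacy. To close the gap you must first show that any full simple character contained in the supercuspidal $\pi$ sits under a \emph{maximal} simple type of $\pi$ (build $\theta'$ up to some $\kappa'\otimes\sigma'$ on $J(\beta',\AA')$, rule out non-cuspidal $\sigma'$ and non-maximal $\BB'$ via the split-type machinery of Chapter 8 of \cite{bushnell1993admissible}, then invoke intertwining-implies-conjugacy for maximal simple types), or else invoke precisely the theorem of \cite{bushnell2013intertwining} being quoted --- as written, your appeal to the over-generous form of (3.5.11) is essentially circular.
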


\subsection{Essentially tame supercuspidal representations of $G$}

\begin{definition}
Let $\pi$ be a supercuspidal representation of $G$, containing a maximal simple type $(J=J(\beta,\AA),\lambda)$ corresponding to the simple stratum $[\AA,n,0,\beta]$. We say that $\pi$ is \emph{essentially tame} if $e_\AA$ is coprime to $N$.
\end{definition}

Note that this is well-defined, by the intertwining implies conjugacy property. The main property of essentially tame supercuspidal representations which we will require is that their conjugacy classes of simple characters are rather well-behaved:

\begin{proposition}
Let $[\AA,n,0,\beta]$ be a simple stratum and let $\theta\in\Cc(\AA,0,\beta)$. Suppose that $e_\AA$ is coprime to $N$. Let $[\AA',n',0,\beta']$ be another simple stratum with $H^1(\beta,\AA)=H^1(\beta',\AA')$, let $\theta'\in\Cc(\AA',0,\beta')$, and suppose that there exists a $g\in G$ such that $^g\theta=\theta'$. Then $\Cc(\AA,0,\beta)=\Cc(\AA,0,\beta')$ and $\theta=\theta'$.
\end{proposition}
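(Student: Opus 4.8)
The plan is to leverage the ``intertwining implies conjugacy'' property for simple characters together with the coprimality hypothesis to upgrade a mere $G$-conjugacy $^g\theta = \theta'$ into an equality of simple characters (and of the associated sets $\Cc$). First I would invoke \cite[(3.5.11)]{bushnell1993admissible}: since $^g\theta = \theta'$ we certainly have $I_G(\theta,\theta') \neq \emptyset$, so there exists $g \in G$ (the given one, or a modification of it) with $\Cc(\AA',0,\beta') = \Cc(^g\AA, 0, {}^g\beta)$ and $\theta' = {}^g\theta$. The task is then to show that in fact one can take $g$ to lie in the stabilizer of the situation — i.e. that $\Cc(\AA,0,\beta) = \Cc(\AA,0,\beta')$ and $\theta = \theta'$ — using that $H^1(\beta,\AA) = H^1(\beta',\AA')$ and $e_\AA$ is coprime to $N$.

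The key structural input is the intertwining set of a simple character. Recall that for $\theta \in \Cc(\AA,0,\beta)$ one has the formula $I_G(\theta) = J(\beta,\AA) \, B_\beta^\times \, J(\beta,\AA)$, where $B_\beta = \End_E(V)$ with $E = F[\beta]$; more precisely the intertwining is controlled by the normalizer $\KK_{\BB_\beta}$ of the hereditary $\dedekind_E$-order $\BB_\beta$. So the element $g$ realizing $^g\theta = \theta'$, once we know $\theta$ and $\theta'$ are both defined on the common group $H^1 := H^1(\beta,\AA) = H^1(\beta',\AA')$, can be analyzed modulo this intertwining set. The point of the coprimality hypothesis is this: the hereditary order $\BB_\beta$ in $B_\beta \cong \Mat_{N/[E:F]}(E)$ has lattice period $e_{\BB_\beta} = e_\AA / e(E/F)$, and its normalizer $\KK_{\BB_\beta}$ modulo $U_{\BB_\beta}$ is (up to the centre) a cyclic group whose order divides $e_{\BB_\beta}$, hence divides $e_\AA$; meanwhile, the ambiguity in how $g$ conjugates the pair $(\AA,\beta)$ versus fixing $\theta$ lives in a quotient whose order divides $N$ (it comes from the choice of an $E$-embedding / the action on the set of chain orders, bounded by $N/[E:F]$ or by $\gcd$-type quantities dividing $N$). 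Coprimality of $e_\AA$ and $N$ forces this common ambiguity to be trivial, so $g$ can be chosen to genuinely fix $\theta$ and to send $\Cc(\AA,0,\beta)$ to $\Cc(\AA,0,\beta')=\Cc(\AA,0,\beta)$, giving $\theta = \theta'$.

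More concretely, I would proceed as follows. (1) Apply intertwining-implies-conjugacy to get $g \in G$ with $\theta' = {}^g\theta$ and $\Cc(\AA',0,\beta') = \Cc(^g\AA, 0, {}^g\beta)$. (2) Since $H^1(\beta,\AA) = H^1(\beta',\AA') = {}^gH^1(\beta,\AA)$, the element $g$ normalizes $H^1(\beta,\AA)$; combined with $\theta' = {}^g\theta$ being a simple character on this same group, deduce that $g$ intertwines $\theta$ with itself, i.e. $g \in I_G(\theta) = J(\beta,\AA)\,\KK_{\BB_\beta}\,J(\beta,\AA)$. Write $g = j_1 \, n \, j_2$ accordingly; since $j_1, j_2 \in J(\beta,\AA) \subset U_\AA$ fix $\theta$ and $\AA$, we may replace $g$ by $n \in \KK_{\BB_\beta}$. (3) Now $n$ normalizes $\BB_\beta$, and the residue of $n$ in $\KK_{\BB_\beta}/U_{\BB_\beta}^1$ has order dividing $e_{\BB_\beta} \mid e_\AA$. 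On the other hand, the condition that ${}^n\AA$ (equivalently ${}^n\BB_\beta$, since $\AA$ is recovered from $\BB_\beta$ and $E$) again gives a hereditary $\dedekind$-order whose normalizer's relevant quotient is constrained by $N$: the upshot (via e.g. \cite[(1.2)]{bushnell1993admissible} on chain orders, or the explicit description of $\KK_\AA/U_\AA$) is that the obstruction to $n \in U_{\BB_\beta}$, i.e.\ to $n$ fixing $\AA$ and hence fixing $\Cc(\AA,0,\beta)$ pointwise up to $U_\AA$-conjugacy, has order dividing both $e_\AA$ and $N$. (4) By coprimality this obstruction vanishes, so $n \in \KK_{\BB_\beta} \cap (\text{stabilizer of } \Cc(\AA,0,\beta))$, whence $\Cc(\AA,0,\beta') = {}^n\Cc(\AA,0,\beta) = \Cc(\AA,0,\beta)$ and correspondingly $\theta' = {}^n\theta = \theta$, as required.

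The main obstacle, and the step deserving the most care, is step (3): pinning down precisely which quotient group controls the failure of $n$ to stabilize $\AA$, and verifying that its order genuinely divides $N$ (rather than merely $N/[E:F]$ times something, or some other near-miss). This requires being careful about the relationship between $\KK_{\BB_\beta}$, the full normalizer $\KK_\AA$, and the subgroup actually preserving the simple character set $\Cc(\AA,0,\beta)$ (which is $\KK_\AA$ itself, since simple characters are normalized by $\KK_\AA$), and then extracting the divisibility from the structure of the finite reductive quotient. Once that bookkeeping is correct, the coprimality hypothesis does the rest essentially for free; everything else is an application of the quoted Bushnell--Kutzko machinery.
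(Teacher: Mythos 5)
A preliminary remark: the paper records this proposition without proof, as a recalled property of essentially tame representations, so there is no internal argument to compare yours against; it has to stand on its own, and as written it does not. The decisive gap is your step (2). Since $\theta$ is a character, an element $g$ normalizing $H^1:=H^1(\beta,\AA)$ ``intertwines $\theta$ with itself'' if and only if it fixes $\theta$: indeed $\Hom_{H^1\cap{}^gH^1}({}^g\theta,\theta)=\Hom_{H^1}(\theta',\theta)$, which is non-zero exactly when $\theta'=\theta$ --- the very conclusion you are trying to prove. So the claimed deduction $g\in I_G(\theta)$ is circular, and the circularity is fatal to the overall structure: both $J(\beta,\AA)$ and $\KK_{\BB_\beta}$ lie in $I_G(\theta)$ and normalize $H^1(\beta,\AA)$, hence fix $\theta$, so if $g$ could really be placed in $J(\beta,\AA)\,\KK_{\BB_\beta}\,J(\beta,\AA)$ while normalizing $H^1$ the proof would end there with no use of the hypothesis $\gcd(e_\AA,N)=1$ at all. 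But the statement is false without that hypothesis: when $e_\AA$ and $N$ share a factor, $\theta$ and $\theta\cdot(\chi\circ\det)$ for a suitable ramified $\chi\in\XXX_N(F)$ can be distinct simple characters on the same group $H^1$ which are nevertheless $G$-conjugate (both occur in $\pi\simeq\pi\otimes(\chi\circ\det)$, and one applies the uniqueness theorem of \cite{bushnell2013intertwining}); this is exactly the ``twist by a character of large level'' phenomenon flagged in the introduction. An argument whose conclusive step never uses coprimality therefore cannot be correct.

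Moreover, the one place where you do invoke coprimality, step (3), only asserts the crucial divisibility: no finite quotient is identified for which the ``obstruction'' provably has order dividing $N$, and that divisibility claim is essentially the content of the proposition itself. There are also two local inaccuracies: the intertwining of a simple character is $I_G(\theta)=J^1(\beta,\AA)\,B_\beta^\times\,J^1(\beta,\AA)$, which is strictly larger than $J(\beta,\AA)\,\KK_{\BB_\beta}\,J(\beta,\AA)$ as soon as $N/[E:F]>1$; and individual simple characters are not normalized by $\KK_\AA$ --- conjugation by $x\in\KK_\AA$ carries $\Cc(\AA,0,\beta)$ to $\Cc(\AA,0,{}^x\beta)$, so only the union of these sets is $\KK_\AA$-stable. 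A correct argument has to quantify how far an element normalizing $H^1$ (such an element lies in $\KK_\AA$) can move $\theta$ within its $G$-conjugacy class, for instance by playing the cyclic quotient $\KK_\AA/F^\times U_\AA$ of order $e_\AA$ against the characters $\chi$ with $\chi^N$ unramified which measure the possible discrepancy between ${}^g\theta$ and $\theta$; it is there, and only there, that $\gcd(e_\AA,N)=1$ can do its work, and none of this is carried out in the proposal.
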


\subsection{Simple types in $G$}

We now consider those representations of $J(\beta,\AA)$ which contain $\theta\in\Cc(\AA,0,\beta)$. We approach this problem in several stages. Fix a simple stratum $[\AA,n,0,\beta]$ and a simple character $\theta\in\Cc(\AA,0,\beta)$. There exists a \emph{Heisenberg extension} $\eta$ of $\theta$: this is the unique irreducible representation $\eta$ of $J^1(\beta,\AA)$ which contains $\theta$ upon restriction to $H^1(\beta,\AA)$; in fact, $\eta$ restricts to $H^1(\beta,\AA)$ as a sum of copies of $\theta$ \cite[(5.1.1)]{bushnell1993admissible}.\\

Next, we say that a $\beta$-extension of $\eta$ is an extension of $\eta$ to $J^1(\beta,\AA)$ which is intertwined by $B_\beta^\times$. By \cite[(5.2.2)]{bushnell1993admissible}, there always exists a $\beta$-extension of $\eta$.\\

Every irreducible representation of $J(\beta,\AA)$ containing $\theta$ is then of the form $\kappa\otimes\sigma$ for some irreducible representation $\sigma$ of $J(\beta,\AA)/J^1(\beta,\AA)\simeq\prod_{i=1}^{e_{\BB_\beta}/\dedekind_E}\mathbf{GL}_{N/[E:F]}(\kkk_E)$. This brings us to the main definition:

\begin{definition}
A \emph{simple type} in $G$ is a pair $(J,\lambda)$ consisting of a compact open subgroup $J$ of $G$ and an irreducible representation $\lambda$ of $J$, of one of the following two forms:
\begin{enumerate}[(i)]
\item $J=J(\beta,\AA)$ for some simple stratum $[\AA,n,0,\beta]$ and $\lambda=\kappa\otimes\sigma$, where $\kappa$ is a $\beta$-extension of some simple character $\theta\in\Cc(\AA,0,\beta)$ and $\sigma$ is the inflation to $J(\beta,\AA)$ of $\sigma_0^{e_{\BB_\beta/\dedekind_E}}$, where $\sigma_0$ is an irreducible cuspidal representation of $\mathbf{GL}_{N/[E:F]}(\kkk_E)$; or
\item $J=U_\AA$ for a principal $\dedekind$-order in $A$ and $\sigma$ is the inflation of an irreducible cuspidal representation of $U_\AA/U_\AA^1\simeq\prod_{i=1}^{e_\AA}\mathbf{GL}_f(\kkk_F)$ of the form $\sigma=\sigma_0^{e_\AA}$, where $\sigma_0$ is an irreducible cuspidal representation of $\mathbf{GL}_f(\kkk_F)$.
\end{enumerate}
\end{definition}

In practice, there is no need to distinguish between these two cases: the second is essentially a degenerate case of the first, with $\theta=\mathds{1}$, $E=F$ and $\BB_\beta=\AA$.\\

We will be interested in the \emph{maximal simple types}; these are simple types $(J,\lambda)$ constructed from $[\AA,n,0,\beta]$ such that $\BB_\beta$ is a maximal $\dedekind_E$-order in $B_\beta$, i.e. such that $e_{\BB_\beta/\dedekind_E}=1$. Thus, in this case, $J/J^1\simeq\mathbf{GL}_{N/[E:F]}(\kkk_E)$ and $\sigma=\sigma_0$. The main result of \cite{bushnell1993admissible} is the following:

\begin{theorem}
Let $(J,\lambda)$ be a maximal simple type in $G$.
\begin{enumerate}[(i)]
\item There exists a supercuspidal representation $\pi$ of $G$ with $\Hom_J(\pi|_J,\lambda)\neq 0$, and any irreducible representation $\pi'$ of $G$ with $\Hom_J(\pi'|_J,\lambda)\neq 0$ is of the form $\pi'\simeq\pi\otimes(\chi\circ\det)$, for some $\chi\in\XXX_{\mathrm{nr}}(F)$.
\item The $G$-intertwining of $\lambda$ is equal to $E^\times J$. There exists a unique extension $\Lambda$ of $\lambda$ to $E^\times J$ such that $\pi\simeq\cInd_J^G\ \Lambda$.
\end{enumerate}
Conversely, any supercuspidal representation of $G$ contains some maximal simple type, and if $(J,\lambda)$ and $(J',\lambda')$ are two maximal simple types in $G$ then $I_G(\lambda,\lambda')\neq\emptyset$ if and only if there exists a $g\in G$ such that $\lambda'\simeq{}^g\lambda$.
\end{theorem}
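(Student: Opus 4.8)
The plan is to follow the architecture of \cite{bushnell1993admissible}, in which parts (i) and (ii) and the intertwining-implies-conjugacy assertion are formal consequences of the structure theory of simple characters and $\beta$-extensions, while the exhaustion statement is the genuinely hard input. Write $\lambda=\kappa\otimes\sigma$ with $\kappa$ a $\beta$-extension of the Heisenberg representation $\eta$ of $\theta\in\Cc(\AA,0,\beta)$ and $\sigma$ the inflation to $J=J(\beta,\AA)$ of an irreducible cuspidal representation $\sigma_0$ of $J/J^1\simeq\mathbf{GL}_{N/[E:F]}(\kkk_E)$ (using maximality of $\BB_\beta$). First I would compute $I_G(\lambda)$: since $\kappa$ is intertwined precisely by $J^1 B_\beta^\times J^1=I_G(\eta)$ and the intertwining spaces of $\eta$ are one-dimensional, a Mackey-decomposition argument reduces the intertwining of $\lambda$ to the intertwining of $\sigma$ inside $\KK_{\BB_\beta}/U_{\BB_\beta}^1$; as $\sigma_0$ is cuspidal, the Bruhat decomposition confines this to $U_{\BB_\beta}/U_{\BB_\beta}^1$ together with the $E^\times$-translations, giving $I_G(\lambda)=\KK_{\BB_\beta}J^1=E^\times J$, which is the first claim of (ii). Now $E^\times J$ is open and compact modulo the centre with $E^\times J/J\simeq\Z$, so the extensions $\Lambda$ of $\lambda$ to $E^\times J$ form a torsor under the characters of this quotient, each such $\Lambda$ has $I_G(\Lambda)\subseteq I_G(\lambda)=E^\times J$, and hence $\pi_\Lambda:=\cInd_{E^\times J}^G\Lambda$ is irreducible supercuspidal by the standard criterion. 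For (i): restricting to $E^\times J$ and using that $E^\times J/J$ is abelian, any $\pi'\in\Irr(G)$ containing $\lambda$ contains some extension $\Lambda$, whence $\Hom_G(\cInd_{E^\times J}^G\Lambda,\pi')\neq 0$ and, $\cInd_{E^\times J}^G\Lambda$ being irreducible, $\pi'\simeq\pi_\Lambda$; two extensions differ by a character of $E^\times J/J$, which I would check extends to some $\chi\circ\det$ with $\chi\in\XXX_{\mathrm{nr}}(F)$, so $\pi_{\Lambda'}\simeq\pi_\Lambda\otimes(\chi\circ\det)$. The uniqueness in (ii) follows since $\pi_\Lambda\simeq\pi_{\Lambda'}$ forces, via Mackey and $I_G(\lambda)=E^\times J$, that $\Lambda\simeq\Lambda'$; thus $\Lambda\mapsto\pi_\Lambda$ is a bijection onto the unramified-twist class of $\pi$.

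For intertwining implies conjugacy, one direction is trivial. For the other, given $(J,\lambda)\supset\theta\in\Cc(\AA,0,\beta)$ and $(J',\lambda')\supset\theta'\in\Cc(\AA',0,\beta')$ with $I_G(\lambda,\lambda')\neq\emptyset$, intertwining of $\lambda,\lambda'$ forces intertwining of $\theta,\theta'$, so by the intertwining-implies-conjugacy property of simple characters we may $G$-conjugate so that $\AA=\AA'$, the sets $\Cc(\AA,0,\beta)$ and $\Cc(\AA,0,\beta')$ coincide, and $\theta=\theta'$. Then $\kappa,\kappa'$ are $\beta$-extensions of the same $\eta$, so differ by a character of $J/J^1$, and we reduce to intertwining $\kappa\otimes\sigma$ with $\kappa\otimes\sigma''$; the intertwining computation above forces $\sigma$ and $\sigma''$ to intertwine in $\mathbf{GL}_{N/[E:F]}(\kkk_E)$, so by cuspidality $\sigma''\simeq{}^x\sigma$ for some $x\in U_{\BB_\beta}/U_{\BB_\beta}^1$; lifting $x$ to $\tilde x\in U_{\BB_\beta}\subset J$, conjugation by $\tilde x$ fixes $\kappa$, so ${}^{\tilde x}\lambda\simeq\lambda'$.

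The remaining assertion — that an arbitrary supercuspidal $\pi$ of $G$ contains a maximal simple type — is where the real work lies. Following \cite{bushnell1993admissible}, one first shows $\pi$ contains a simple character $\theta\in\Cc(\AA,0,\beta)$: this is the analytic heart, requiring the classification of fundamental and split strata and the iterative refinement, by means of tame corestriction over the tower of fields $F[\gamma]$, of a stratum occurring in $\pi$. Granting this, $\pi$ contains the Heisenberg extension $\eta$, hence on restriction to $J(\beta,\AA)$ a representation $\kappa\otimes\sigma$; supercuspidality of $\pi$ forces $\sigma$ to have a cuspidal constituent on $J/J^1\simeq\prod_{i=1}^{e_{\BB_\beta/\dedekind_E}}\mathbf{GL}_{N/[E:F]}(\kkk_E)$, and a further appeal to supercuspidality (rather than mere square-integrability) forces this constituent to be $\sigma_0^{e_{\BB_\beta}}$ with $e_{\BB_\beta/\dedekind_E}=1$, i.e.\ $\BB_\beta$ maximal, so that $(J,\kappa\otimes\sigma_0)$ is a maximal simple type contained in $\pi$.

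I expect this exhaustion step to be by far the main obstacle: once a simple character is known to lie inside $\pi$, everything else — the intertwining computation, parts (i) and (ii), and intertwining-implies-conjugacy — is bookkeeping with $\beta$-extensions and cuspidal representations of finite general linear groups, whereas extracting that simple character from a general supercuspidal demands the full stratum machinery of \cite{bushnell1993admissible}.
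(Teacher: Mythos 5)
This theorem is not proved in the paper at all: it is quoted as the main result of \cite{bushnell1993admissible} (with the containment of a simple character in any supercuspidal cited separately to \cite{bushnell2013intertwining}), so there is no internal argument to compare against. Your outline faithfully reproduces the Bushnell--Kutzko architecture — the intertwining computation giving $I_G(\lambda)=E^\times J$, compact induction of the extensions $\Lambda$, intertwining-implies-conjugacy reduced to simple characters and cuspidal representations of $\mathbf{GL}_{N/[E:F]}(\kkk_E)$, and the stratum-theoretic exhaustion correctly identified as the hard core — so it is consistent with the cited proof, though it is a roadmap deferring that analytic core to the machinery of \cite{bushnell1993admissible} rather than a self-contained argument.
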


Given a maximal simple type $(J,\lambda)$ with $\lambda=\kappa\otimes\sigma$, then there is a convenient way of recovering the representation $\sigma$. Given an irreducible representation $\rho$ of a group $H$ containing $J^1=J^1(\beta,\AA)$, the space $\Hom_{J^1}(\kappa,\rho)$ carries a natural $J$-action given by $j\cdot f=\rho(j)\circ f\circ\kappa(j)^{-1}$, for $j\in J$ and $f\in\Hom_{J^1}(\kappa,\rho)$. Since $f$ is $J^1$-equivariant, this action is trivial on $J^1$ and so this defines a functor $\KKK_\kappa:\Rep(H)\rightarrow\Rep(J/J^1)$ given by $\Hom_{J^1}(\kappa,-)$. This is an exact functor which, in particular, maps admissible representations of $G$ to finite-dimensional representations of $J/J^1$. Given a simple type $(J,\lambda=\kappa\otimes\sigma)$, one has $\KKK_\kappa(\lambda)=\sigma$.

\subsection{Simple types in $\bar{G}$}

We now describe the passage, via Clifford theory, from maximal simple types in $G$ to the corresponding objects in $\bar{G}$. The results in this section are established in \cite{bushnell1993sln,bushnell1994sln}.\\

Let $\pi$ be a supercuspidal representation of $G$, and let $(J,\lambda)$ be a maximal simple type contained in $\pi$. The representation $\pi|_{\bar{G}}$ is, by Clifford theory, isomorphic to a multiplicity-free direct sum of representations which are $G$-conjugate to some supercuspidal representation $\bar{\pi}$ of $\bar{G}$. Every supercuspidal representation of $\bar{G}$ arises in this way.

\begin{definition}
A \emph{maximal simple type} in $\bar{G}$ is a pair of the form $(\bar{J},\mu)$ where $\bar{J}=J\cap\bar{G}$ and $\mu$ is an irreducible subrepresentation of $\lambda|_{\bar{J}}$ for some maximal simple type $(J,\lambda)$ in $G$.
\end{definition}

\begin{theorem}[\cite{bushnell1994sln}]
Let $\pi$ be a supercuspidal representation of $G$, and let $\bar{\pi}$ be an irreducible subrepresentation of $\pi|_{\bar{G}}$. Suppose that $\pi$ contains the maximal simple type $(J,\lambda)$. Then there exists an irreducible subrepresentation $\mu$ of $\lambda|_{\bar{J}}$ such that $\bar{\pi}$ contains the maximal simple type $(\bar{J},\mu)$.\\

Conversely, given a maximal simple type $(\bar{J},\mu)$ in $\bar{G}$, the representation $\cInd_{\bar{J}}^{\bar{G}}\ \mu$ is irreducible and supercuspidal. If $(\bar{J}',\mu')$ is another maximal simple type in $\bar{G}$ such that $I_{\bar{G}}(\mu,\mu')\neq\emptyset$, then there exists a $g\in\bar{G}$ such that $\bar{J}'={}^g\bar{J}$ and $\mu'\simeq{}^g\mu$.
\end{theorem}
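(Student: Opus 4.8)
\emph{Proof proposal.} The plan is to deduce the theorem from the $\GLN(F)$-theory recalled above, via Clifford theory for the two normal subgroups $\bar{G}\triangleleft G$ and $\bar{J}=J\cap\bar{G}\triangleleft J$. Write $\tilde{J}=E^\times J$ for the $G$-intertwining of $\lambda$, so that $\pi\simeq\cInd_{\tilde{J}}^{G}\Lambda$ with $\Lambda|_{J}=\lambda$ and $I_G(\lambda)=\tilde{J}$. The first step, and the source of almost all the rigidity one needs, is the structural identity $\bar{G}\cap{}^{g}\tilde{J}={}^{g}\bar{J}$ for all $g\in G$. For $g=1$: any $x\in\tilde{J}$ has the form $x=zj$ with $z\in E^\times$ and $j\in J\subset U_\AA$, and the valuation of $\det(zj)$ is $(N/[E:F])f(E/F)\,v_E(z)$ since $\det j$ is a unit; hence $x\in\bar{G}$ forces $v_E(z)=0$, so $z\in\dedekind_E^\times\subset U_{\BB_\beta}\subset J$ (the simple type being maximal), and $x\in J\cap\bar{G}=\bar{J}$. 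Conjugating by $g$ gives the general case. The role of this identity is that it converts the compact-modulo-centre group $\tilde{J}$ --- over which $\pi$ is compactly induced --- into the genuinely compact group $\bar{J}$, which is what will make compact induction from $\bar{J}$ to $\bar{G}$ of an irreducible representation automatically irreducible and supercuspidal, once its intertwining is under control.

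For the converse clause I would first decompose $\lambda|_{\bar{J}}$: as $J/\bar{J}$ embeds into $G/\bar{G}\simeq F^\times$ via $\det$ and is abelian, Clifford theory writes $\lambda|_{\bar{J}}$ as a direct sum of the members of a single $J$-orbit $\{\mu_i\}$ of irreducibles, and $\mu$ is one of these. I would then establish $I_{\bar{G}}(\mu)=\bar{J}$; by the standard criterion (compact induction of an irreducible representation from an open subgroup that is compact modulo the centre is irreducible once the intertwining equals that subgroup) this gives that $\cInd_{\bar{J}}^{\bar{G}}\mu$ is irreducible, and supercuspidality then comes for free, being compact induction from a compact open subgroup. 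The inclusion $\bar{J}\subseteq I_{\bar{G}}(\mu)$ is trivial; for the reverse, $g\in I_{\bar{G}}(\mu)$ yields $\Hom_{J\cap{}^{g}J\cap\bar{G}}(\lambda,{}^{g}\lambda)\neq 0$, and one must promote this to $\Hom_{J\cap{}^{g}J}(\lambda,{}^{g}\lambda)\neq 0$, i.e. to $g\in I_G(\lambda)=\tilde{J}$, whence $g\in\tilde{J}\cap\bar{G}=\bar{J}$. The obstruction to this promotion is a character of the abelian quotient $(J\cap{}^{g}J)/(J\cap{}^{g}J\cap\bar{G})$, and removing it is the technical heart of the matter: one uses the rigidity of the simple character $\theta\in\Cc(\AA,0,\beta)$ contained in $\lambda$ --- precisely, that the restriction of $\theta$ (and of its Heisenberg and $\beta$-extensions) to the $\bar{G}$-part of its domain satisfies a $\bar{G}$-version of ``intertwining implies conjugacy'' with $\bar{G}$-intertwining no larger than the $\bar{G}$-part of the $\GLN(F)$-intertwining set --- together with the elementary fact that a cuspidal representation of $\mathbf{GL}_{N/[E:F]}(\kkk_E)$ has $\mathbf{SL}_{N/[E:F]}(\kkk_E)$-intertwining equal to that group, which handles the block $\sigma=\KKK_\kappa(\lambda)$.

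For existence, Mackey's restriction formula gives $\pi|_{\bar{G}}\simeq\bigoplus_{g}\cInd_{{}^{g}\bar{J}}^{\bar{G}}\big({}^{g}\lambda|_{{}^{g}\bar{J}}\big)$ over $g\in\bar{G}\backslash G/\tilde{J}$ (a finite set, since $\det\tilde{J}$ has finite index in $F^\times$), using $\bar{G}\cap{}^{g}\tilde{J}={}^{g}\bar{J}$ and $\Lambda|_{\bar{J}}=\lambda|_{\bar{J}}$. Breaking each summand into the pieces $\cInd_{{}^{g}\bar{J}}^{\bar{G}}({}^{g}\mu_i)$, each is irreducible and supercuspidal by the converse clause applied to the maximal simple type $({}^{g}J,{}^{g}\lambda)$ of $G$ (a $G$-conjugate of $(J,\lambda)$, again contained in $\pi$ since ${}^{g}\pi\simeq\pi$). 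Thus $\pi|_{\bar{G}}$ is a sum of irreducibles, so every irreducible constituent $\bar{\pi}$ equals some $\cInd_{{}^{g}\bar{J}}^{\bar{G}}({}^{g}\mu_i)$ and contains the maximal simple type $({}^{g}\bar{J},{}^{g}\mu_i)$ of $\bar{G}$; in particular every $\bar{\pi}$ contains a maximal simple type of $\bar{G}$. (One should note that the type thus produced is attached to a $G$-conjugate of $(J,\lambda)$ rather than literally to $(J,\lambda)$ itself, so the clause is to be read with ``the maximal simple type of $\pi$'' understood up to $G$-conjugacy --- harmless, as these form a single $G$-conjugacy class.)

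Finally, the intertwining-implies-conjugacy clause. If $g\in\bar{G}$ intertwines maximal simple types $(\bar{J},\mu)$ and $(\bar{J}',\mu')$ of $\bar{G}$ arising from maximal simple types $(J,\lambda)$, $(J',\lambda')$ of $G$, the restriction argument of the second paragraph gives $\Hom_{J\cap{}^{g}J'\cap\bar{G}}(\lambda,{}^{g}\lambda')\neq 0$, which the same rigidity input promotes to $g\in I_G(\lambda,\lambda')$. The $\GLN(F)$-theorem then gives $h\in G$ with ${}^{h}J=J'$ and ${}^{h}\lambda\simeq\lambda'$; hence ${}^{h}\bar{J}=\bar{J}'$, and since ${}^{h}\mu$ and $\mu'$ both lie in the single $J'$-orbit of constituents of $\lambda'|_{\bar{J}'}$ there is $j'\in J'$ with ${}^{j'h}\mu\simeq\mu'$, so $k:=j'h\in G$ conjugates $(\bar{J},\mu)$ to $(\bar{J}',\mu')$. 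It remains to replace $k$ by an element of $\bar{G}$ without changing this pair up to $\bar{G}$-conjugacy: here one uses that $\{x\in N_G(J):{}^{x}\lambda\simeq\lambda\}=\tilde{J}$, that the centre of $G$ and $\dedekind_E^\times$ act on the $\mu_i$ in a controlled way, and that $\det$ maps the relevant groups onto a subgroup of $F^\times$ containing $\det k$ --- a bookkeeping argument in the spirit of, but subtler than, the structural identity of the first paragraph. I expect the hardest part overall to be this ``transfer modulo determinants'' common to the converse and IIC clauses: the genuine content lies in the $\bar{G}$-intertwining and $\bar{G}$-``intertwining implies conjugacy'' properties of the restrictions to $\bar{G}$ of simple characters and $\beta$-extensions, and in the interaction of the finite groups $\det(J)$, $\det(\tilde{J})$ with the $J$-action on the constituents of $\lambda|_{\bar{J}}$ --- which is exactly the analysis carried out in \cite{bushnell1993sln,bushnell1994sln}.
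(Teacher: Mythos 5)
You should first note that the paper contains no proof of this statement to compare against: it is imported wholesale from Bushnell--Kutzko (the section opens with ``The results in this section are established in \cite{bushnell1993sln,bushnell1994sln}''), so the only meaningful comparison is with their argument. At the level of architecture your sketch reproduces that argument correctly: the identity $E^\times J\cap\bar{G}=\bar{J}$ (your valuation computation is fine), the Mackey/Clifford decomposition of $\pi|_{\bar{G}}$ over $\bar{G}\backslash G/E^\times J$, irreducibility and supercuspidality of $\cInd_{\bar{J}}^{\bar{G}}\ \mu$ via $I_{\bar{G}}(\mu)=\bar{J}$, and the reduction of the conjugacy clause to the $\GLN(F)$ statement. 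Your parenthetical caveat in the existence clause is moreover not a mere reading preference but necessary: with $(J,\lambda)$ fixed, a constituent of $\pi|_{\bar{G}}$ need not contain any component of $\lambda|_{\bar{J}}$ on $\bar{J}$ itself (already for depth-zero supercuspidals of $\mathbf{SL}_2$, in the size-four packet two of the four constituents only contain types on the non-conjugate maximal compact subgroup), so the clause must indeed be read up to $G$-conjugacy of the type.

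The genuine gap is that the two steps carrying all the weight are asserted rather than proved, and they are precisely the content of \cite{bushnell1993sln,bushnell1994sln}. First, the ``promotion'' from $\Hom_{\bar{J}\cap{}^{g}\bar{J}}(\mu,{}^{g}\mu)\neq 0$ to $g\in I_G(\lambda)$: you correctly identify the obstruction as a character of $(J\cap{}^{g}J)/(J\cap{}^{g}J\cap\bar{G})$, but removing it is not a soft Clifford-theoretic manipulation; it amounts to computing the $G$-intertwining of the restriction of a simple character to $H^1\cap\bar{G}$ (and of the Heisenberg and $\beta$-extensions), which is the main body of \cite{bushnell1993sln}, and the finite-group input you quote about $\mathbf{SL}_{N/[E:F]}(\kkk_E)$-intertwining of cuspidals also needs proof. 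Without this you have no control on $I_{\bar{G}}(\mu)$, hence neither irreducibility nor the intertwining-implies-conjugacy clause. Second, the final ``transfer modulo determinants'' replacing $k\in G$ by an element of $\bar{G}$: since $G$-conjugate maximal simple types of $\bar{G}$ are in general \emph{not} $\bar{G}$-conjugate (same $\mathbf{SL}_2$ example), this step must re-use the hypothesis $I_{\bar{G}}(\mu,\mu')\neq\emptyset$ together with a description of the $G$-stabilizer of $\cInd_{\bar{J}}^{\bar{G}}\ \mu$ in terms of $E^\times J$ and the $J$-action on the components of $\lambda|_{\bar{J}}$; your sketch gestures at the right objects but gives no argument. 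In short, you have located the hard points accurately and cited the right sources for them, but at exactly those points the proposal is a reduction to the theorems of \cite{bushnell1993sln,bushnell1994sln} rather than a proof, and should be presented as such.
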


\subsection{Types}

We now interpret the constructions of the two preceding sections in a slightly more general context.

\begin{definition}
Let $\pi$ be a supercuspidal representation of a $p$-adic group $\Gg$. A \emph{$[\Gg,\pi]_\Gg$-type} is a pair $(J,\lambda)$ consisting of a compact open subgroup $J$ of $G$ and an irreducible representation $\lambda$ of $J$ such that, for any irreducible representation $\pi'$ of $G$, one has that $\Hom_J(\pi'|_J,\lambda)\neq 0$ if and only if there exists an unramified character $\omega$ of $\Gg$ such that $\pi'\simeq\pi\otimes\omega$.
\end{definition}

In the case that $\Gg=\bar{G}$, the only unramified characters of $G$ are of the form $\omega=\chi\circ\det$ for $\chi\in\XXX_{\mathrm{nr}}(F)$. In the case that $\Gg=\bar{G}$, there are no non-trivial unramified characters, and so the condition simply becomes $\pi'\simeq\pi$. From this, it is simple to check that the maximal simple types discussed above are $[\Gg,\pi]_\Gg$-types for the appropriate choices of $\Gg$ and $\pi$.\\

While we do not go into the details here, we note that this definition makes sense due to more theoretical reasons: a $[G,\pi]_G$-type is a means of describing the block containing $\pi$ in the Bernstein decomposition of $\Rep(G)$ in terms of a finite-dimensional representation of a compact group; see \cite{bushnell1998structure}.\\

In this paper, we will completely classify $[\bar{G},\bar{\pi}]_{\bar{G}}$-types when $\bar{\pi}$ is an essentially tame supercuspidal representation of $\bar{G}$. The above notion of a type turns out to be inconvenient for these purposes. Indeed, from a $[\Gg,\pi]_\Gg$-type $(J,\lambda)$, there are two simple ways of producing new types: forming the pair $({}^gJ,{}^g\lambda)$ for some $g\in\Gg$; or forming the pair $(K,\tau)$, where $K\supset J$ is compact open and $\tau$ is an irreducible subrepresentation of $\Ind_J^K\ \lambda$. We therefore make the following modified definition:

\begin{definition}
A \emph{$[\Gg,\pi]_\Gg$-archetype} is a $\Gg$-conjugacy class of $[\Gg,\pi]_\Gg$-types $(K,\tau)$ with $K\subset G$ a \emph{maximal} compact subgroup.
\end{definition}

It is these archetypes which are amenable to a clean classification. We will often abuse notation, and speak of an archetype $(K,\tau)$ as being a conjuacy class of types, \emph{together with the fixed choice of representative $(K,\tau)$}.

\section{The main results}

Our goal is to show that, given an essentially tame supercuspidal representation $\bar{\pi}$ of $\bar{G}$, any $[\bar{G},\bar{\pi}]_{\bar{G}}$-type which is defined on a maximal compact subgroup of $\bar{G}$ must be induced from a maximal simple type contained in $\bar{\pi}$. The key to this result is the following:

\begin{theorem}\label{extension-to-K}
Let $\pi$ be an essentially tame supercuspidal representation of $G$, and let $\bar{\pi}$ be an irreducible subrepresentation of $\pi|_K$. Suppose that there exists a $[\bar{G},\bar{\pi}]_{\bar{G}}$-type of the form $(\bar{K},\bar{\tau})$. Then there exists an irreducible subrepresentation $\tau$ of $\Ind_{\bar{K}}^K\ \bar{\tau}$ such that $(K,\tau)$ is a $[G,\pi]_G$-type.
\end{theorem}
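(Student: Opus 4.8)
The plan is to start with the hypothesis that $(\bar K,\bar\tau)$ is a $[\bar G,\bar\pi]_{\bar G}$-type and extract from it enough information to build a $[G,\pi]_G$-type on $K$. The natural candidate for $\tau$ is an irreducible constituent of $\mathrm{Ind}_{\bar K}^K\bar\tau$, so the real content is to show that some such $\tau$ has the correct intertwining/containment property: an irreducible representation $\pi'$ of $G$ contains $\tau|_K$ if and only if $\pi'\simeq\pi\otimes(\chi\circ\det)$ for some $\chi\in\XXX_{\mathrm{nr}}(F)$. First I would run Clifford theory for the inclusion $\bar K\subset K$ (noting $K/\bar K\bar Z$ is finite abelian, where $\bar Z=Z\cap K$), to describe the constituents of $\mathrm{Ind}_{\bar K}^K\bar\tau$ and, dually, the constituents of $\lambda|_{\bar J}$ for the maximal simple type $(J,\lambda)$ in $\pi$; this is exactly parallel to the $G$ versus $\bar G$ Clifford theory recalled in the excerpt, and the key point is that $\mathrm{Ind}_{\bar K}^K\bar\tau$ is multiplicity-free after twisting by characters of $K/\bar K$.

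The heart of the argument is the ``only if'' direction, i.e. that no unwanted $\pi'$ appears. Suppose $\pi'$ is irreducible with $\mathrm{Hom}_K(\pi'|_K,\tau)\neq 0$. Restricting to $\bar K$ and then to $\bar G$, I would produce an irreducible subrepresentation $\bar\pi'$ of $\pi'|_{\bar G}$ containing $\bar\tau|_{\bar K}$; since $(\bar K,\bar\tau)$ is a $[\bar G,\bar\pi]_{\bar G}$-type, this forces $\bar\pi'\simeq\bar\pi$. Hence $\bar\pi$ is a subrepresentation of both $\pi'|_{\bar G}$ and $\pi|_{\bar G}$, which by Clifford theory for $\bar G\subset G$ means $\pi'$ and $\pi$ differ by a character of $G/\bar G Z\hookrightarrow\XXX(F)/\XXX_{\mathrm{nr}}(F)$ — more precisely $\pi'\simeq\pi\otimes(\chi\circ\det)$ for some $\chi\in\XXX(F)$ whose restriction to $\dedekind^\times$ controls the failure of $\pi'$ to lie in the inertial class of $\pi$. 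So it remains to rule out the ramified twists: I must show that if $\pi\otimes(\chi\circ\det)$ contains $\tau|_K$ and $\chi|_{\dedekind^\times}\neq 1$, we get a contradiction. This is where essential tameness enters. Applying the functor $\KKK_\kappa$ of the excerpt (for a $\beta$-extension $\kappa$ attached to a simple character $\theta$ of $\pi$) and tracking the level, a ramified twist $\chi\circ\det$ either changes the $G$-conjugacy class of the underlying simple character $\theta$ or, if it fixes it, acts on the cuspidal $\sigma_0$ of $\mathbf{GL}_{N/[E:F]}(\kkk_E)$ by a nontrivial character of $\kkk_E^\times$ via the norm; essential tameness ($e_\pi$ coprime to $N$) guarantees that a maximal simple type does not intertwine with such a twist (this is precisely the arithmetic phenomenon flagged in the introduction as failing in the general case), so $\pi\otimes(\chi\circ\det)$ cannot contain the same $\bar K$-data. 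Combining, $\chi$ must be unramified, as required.

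For the ``if'' direction, I would check that for every $\chi\in\XXX_{\mathrm{nr}}(F)$ the twist $\pi\otimes(\chi\circ\det)$ still contains $\tau|_K$: since an unramified twist is trivial on $U_\AA\supset J$ up to the centre (more precisely, $\chi\circ\det$ restricted to $K$ is a character of $K/\bar K\bar Z$, or even trivial on $K$ for $K=\mathbf{GL}_N(\dedekind)$-type maximal compacts after adjusting by the centre), the restriction to $K$ is unchanged up to twist by a character of $K$ that already permutes the constituents of $\mathrm{Ind}_{\bar K}^K\bar\tau$ among themselves; hence we may arrange $\tau$ so that it is contained in every unramified twist. Finally, I would verify that $\tau|_K$ actually occurs in $\pi$ itself: this follows because $\bar\tau|_{\bar K}$ occurs in $\bar\pi$ (given), $\bar\pi$ occurs in $\pi|_{\bar G}$, and a Frobenius reciprocity / Mackey computation along $\bar K\subset K$ then shows a suitable constituent $\tau$ of $\mathrm{Ind}_{\bar K}^K\bar\tau$ occurs in $\pi|_K$.

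The main obstacle I expect is the ramified-twist step: one has to control exactly which characters $\chi$ of $F^\times$ can intertwine the $\bar K$-type data upstairs, and this is genuinely where essential tameness is used — the level of $\chi$ can be arbitrarily large, and only the coprimality $\gcd(e_\pi,N)=1$ prevents a maximal simple type from being isomorphic to a ramified twist of itself after restriction to $\bar J$. Organizing this cleanly — reducing to a statement purely about simple characters and cuspidal representations of finite general linear groups, where Proposition on conjugacy classes of simple characters for essentially tame $\pi$ can be invoked — will be the technical core of the proof.
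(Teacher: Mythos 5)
Your first and last steps do line up with the paper: choosing a constituent $\tau$ of $\Ind_{\bar{K}}^K\ \bar{\tau}$ contained in $\pi$, and observing that any irreducible $\pi'$ containing such a constituent restricts to $\bar{G}$ containing a conjugate of $\bar{\pi}$ and hence is $\pi\otimes(\chi\circ\det)$ with $\chi\in\XXX_N(F)$ by comparing central characters, is exactly Lemma \ref{order-N}; the converse direction for unramified twists is immediate since $\chi\circ\det$ is then trivial on $K$. The gap is in your central step. The assertion that essential tameness prevents a maximal simple type from intertwining with its twist by a ramified character is false: tamely ramified self-twists $\pi\simeq\pi\otimes(\chi\circ\det)$ do occur (they are precisely why the $L$-packets have size greater than one, as in the $e_\pi\cdot|\Pi|$ count), and then the maximal simple type is conjugate to its twist. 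What essential tameness actually buys, via the ``conjugate implies equal'' property of simple characters, is only that such a $\chi$ must be trivial on $\det H^1(\beta,\AA)$, i.e.\ it eliminates wildly ramified twists; the remaining at most $\gcd(N,q_F-1)$ tame twists cannot be excluded by any intertwining statement, and the paper disposes of them by a quite different counting argument (the induced representation from $\mathscr{H}=H\cdot\mathbf{SL}_{N/[E:F]}(\kkk_E)$ to $J/J^1$ has at least $q_F$ distinct constituents, exceeding the number of available twists), with a separate parabolic-restriction argument when $E/F$ is totally ramified.

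More fundamentally, your argument tacitly assumes that the constituent $\tau$ you have chosen interacts with $\theta$, $\kappa$ and $\sigma_0$ the way the archetype $\Ind_J^K\ \lambda$ does. A priori $\tau$ is just some constituent of $\pi|_K$, i.e.\ it lives over an arbitrary double coset in $\KK_\AA\backslash G/K$ in the Mackey decomposition of $\pi|_K$, and the entire technical core of the paper's proof is the elimination of the nontrivial cosets: the ``type A'' case, handled by Pa\v{s}k\={u}nas's finite-group intertwining results together with the counting argument above, and the ``type B'' case, handled by the auxiliary character $\mu$ satisfying $\theta\mu=\theta$ on $H^1\cap{}^{g^{-1}}K$ combined with essential tameness. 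Your proposal never engages with this dichotomy, and without it one cannot conclude that the representations containing $\tau$ are exactly the unramified twists of $\pi$, even granting everything you say about twists of the maximal simple type; only after $\tau$ is identified with $\Ind_J^K\ \lambda$ does the unicity theorem for $G$ finish the proof.
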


This is the main technical result of the paper; we delay its proof until section \ref{proof-section} in order to first discuss its consequences.\\

Any essentially tame supercuspidal representation $\bar{\pi}$ of $\bar{G}$ is obtained as a subrepresentation of $\pi|_{\bar{G}}$ for some $\pi$, and by \cite{paskunas2005unicity} we know that any $[G,\pi]_G$-type of the form $(K,\tau)$ for some maximal compact subgroup $K$ of $G$ must be of the form $\tau\simeq\Ind_J^K\ \lambda$ for some maximal simple type $(J,\lambda)$ in $G$. By Frobenius reciprocity, we therefore realize $\bar{\tau}$ as a subrepresentation of
\[\Res_{\bar{K}}^K\Ind_J^K\ \lambda=\bigoplus_{J\backslash K/\bar{K}}\Ind_{^g\bar{J}}^{\bar{K}}\Res_{^g\bar{J}}^{^gJ}\ ^g\lambda.
\]
Any subrepresentation of this representation is of the form $\Ind_{\bar{J}}^{\bar{K}}\ \mu$ for some maximal simple type $(\bar{J},\mu)$ in $\bar{G}$. We therefore conclude that:

\begin{corollary}
Let $\bar{\pi}$ be an essentially tame supercuspidal representation of $\bar{G}$, and let $(\bar{K},\bar{\tau})$ be a $[\bar{G},\bar{\pi}]_{\bar{G}}$-archetype. Then there exists a maximal simple type $(\bar{J},\mu)$ with $\bar{J}\subset\bar{K}$ such that $\bar{\tau}\simeq\Ind_{\bar{J}}^{\bar{K}}\ \mu$.
\end{corollary}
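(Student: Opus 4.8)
The plan is to deduce the Corollary directly from Theorem \ref{extension-to-K} together with the known structure theory recalled above, essentially following the paragraph immediately preceding the statement. First I would fix an essentially tame supercuspidal $\bar\pi$ of $\bar G$ and a $[\bar G,\bar\pi]_{\bar G}$-archetype $(\bar K,\bar\tau)$, so $\bar K = K\cap\bar G$ for some maximal compact $K\subset G$. Write $\bar\pi$ as an irreducible summand of $\pi|_{\bar G}$ for a supercuspidal $\pi$ of $G$; since $e_{\bar\pi}$ is coprime to $N$, $\pi$ is essentially tame in the sense of the Definition (this needs a brief check that the lattice period is insensitive to passing between $G$ and $\bar G$, which is immediate from the construction of maximal simple types in $\bar G$ by restriction). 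Applying Theorem \ref{extension-to-K}, there is an irreducible subrepresentation $\tau$ of $\Ind_{\bar K}^K\ \bar\tau$ such that $(K,\tau)$ is a $[G,\pi]_G$-type.

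Next I would invoke Paskunas's unicity theorem \cite{paskunas2005unicity}: any $[G,\pi]_G$-type defined on a maximal compact subgroup $K$ of $G$ has the form $\tau\simeq\Ind_J^K\ \lambda$ for a maximal simple type $(J,\lambda)$ contained in $\pi$, with $J\subset K$. (Here one uses that for $\GLN$ the maximal compact subgroups are, up to conjugacy, the normalizers $U_\AA$ of maximal orders, and that after $G$-conjugation we may assume $J\subset K$.) Now I would run Frobenius reciprocity in the reverse direction: from $\tau\hookrightarrow\Ind_{\bar K}^K\ \bar\tau$ we get, by Frobenius reciprocity and Mackey's restriction–induction formula,
\[
\Hom_{\bar K}\!\left(\bar\tau,\ \Res_{\bar K}^K\Ind_J^K\ \lambda\right)
=\Hom_{\bar K}\!\left(\bar\tau,\ \bigoplus_{g\in J\backslash K/\bar K}\Ind_{{}^g\bar J}^{\bar K}\Res_{{}^g\bar J}^{{}^gJ}\ {}^g\lambda\right)\neq 0,
\]
so $\bar\tau$ is a subrepresentation of some $\Ind_{{}^g\bar J}^{\bar K}\ \rho$ with $\rho$ an irreducible subrepresentation of $\Res_{{}^g\bar J}^{{}^gJ}\ {}^g\lambda$. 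By the Definition of maximal simple type in $\bar G$, the pair $({}^g\bar J,\rho)$ is a maximal simple type in $\bar G$, and after conjugating by $g^{-1}$ inside $K$ (which is harmless as we are working up to $\bar G$-conjugacy, or more precisely we simply rename) we obtain $\bar J\subset\bar K$ and an identification $\bar\tau\simeq\Ind_{\bar J}^{\bar K}\ \mu$ for a maximal simple type $(\bar J,\mu)$. One should also note $\bar J = J\cap\bar G \subset K\cap\bar G = \bar K$, so the containment $\bar J\subset\bar K$ is automatic once $J\subset K$.

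The one point requiring genuine care — though it is not an obstacle so much as a bookkeeping step — is the transition between the double-coset decomposition for $K$ over $J$ and $\bar K$ and the claim that every irreducible subrepresentation of the restriction of a maximal simple type $\lambda$ of $G$ to $\bar J$ is itself (a conjugate of) a maximal simple type of $\bar G$: this is exactly the content of the Definition of maximal simple types in $\bar G$, so there is nothing to prove, only to cite. The substantive mathematics is entirely contained in Theorem \ref{extension-to-K} and in \cite{paskunas2005unicity}; the Corollary is a formal consequence, and the proof is essentially the displayed computation above together with the remark that any subrepresentation of $\Ind_J^K\lambda$ restricted to $\bar K$ decomposes through the maximal simple types of $\bar G$. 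I expect no real difficulty beyond making sure the conjugating elements live in the correct group and that "maximal compact" is used consistently on both sides.
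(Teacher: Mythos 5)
Your argument is correct and follows essentially the same route as the paper: apply Theorem \ref{extension-to-K} to produce a $[G,\pi]_G$-type $(K,\tau)$ inside $\Ind_{\bar{K}}^K\,\bar{\tau}$, identify $\tau\simeq\Ind_J^K\,\lambda$ via Pa\v{s}k\={u}nas's unicity theorem, and then use Frobenius reciprocity with the Mackey decomposition of $\Res_{\bar{K}}^K\Ind_J^K\,\lambda$ together with the definition of maximal simple types in $\bar{G}$. No substantive difference from the paper's proof.
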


This brings us to our main theorem:

\begin{theorem}[The unicity of types for essentially tame supercuspidal representations of $\SLN(F)$]\label{SLN-unicity}
Let $\bar{\pi}$ be an essentially tame supercuspidal representation of $\bar{G}$.
\begin{enumerate}[(i)]
\item If $(\bar{K},\bar{\tau})$ is a $[\bar{G},\bar{\pi}]_{\bar{G}}$-archetype, then there exists a maximal simple type $(\bar{J},\mu)$ with $\bar{J}\subset\bar{K}$ such that $\bar{\tau}\simeq\Ind_{\bar{J}}^{\bar{K}}\ \mu$. Moreover, $\bar{\tau}$ is contained in $\bar{\pi}$ with multiplicity one.
\item If $\bar{K}$ is a maximal compact subgroup of $\bar{G}$ then there exists at most one $[\bar{G},\bar{\pi}]_{\bar{G}}$-archetype of the form $(\bar{K},\bar{\tau})$.
\item Any two $[\bar{G},\bar{\pi}]_{\bar{G}}$-archetypes are conjugate (up to isomorphism) by an element of $G$.
\item There number of $[\bar{G},\bar{\pi}]_{\bar{G}}$-archetypes is precisely $e_{\bar{\pi}}$.
\end{enumerate}
\end{theorem}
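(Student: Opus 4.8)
The plan is to derive Theorem \ref{SLN-unicity} from Theorem \ref{extension-to-K} together with the Bushnell--Kutzko/Paskunas machinery already recalled, handling the four assertions in a linked chain. First I would establish (i): starting from a $[\bar{G},\bar{\pi}]_{\bar{G}}$-archetype $(\bar{K},\bar{\tau})$, apply Theorem \ref{extension-to-K} to produce a $[G,\pi]_G$-type $(K,\tau)$ with $\bar{\tau}\subset\Res_{\bar{K}}^K\tau$; by Paskunas's theorem $\tau\simeq\Ind_J^K\lambda$ for a maximal simple type $(J,\lambda)$ in $G$, and then the Mackey-decomposition computation already displayed in the excerpt (the one expanding $\Res_{\bar{K}}^K\Ind_J^K\lambda$ over $J\backslash K/\bar{K}$) forces $\bar{\tau}\simeq\Ind_{\bar{J}}^{\bar{K}}\mu$ for a maximal simple type $(\bar{J},\mu)$ in $\bar{G}$, which is Corollary (the one preceding the theorem). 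For the multiplicity-one claim, I would use that $\bar{\pi}\simeq\cInd_{\bar{J}}^{\bar{G}}\mu$ by the $\bar{G}$-version of the Bushnell--Kutzko theorem, and compute $\Hom_{\bar{K}}(\bar{\pi}|_{\bar{K}},\Ind_{\bar{J}}^{\bar{K}}\mu)$ via Mackey; the essentially-tame hypothesis (through Proposition on conjugacy classes of simple characters) is what guarantees the relevant double cosets contribute exactly once, exactly as in the $\GLN$ case.

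Next, for (ii), suppose $(\bar{K},\bar{\tau})$ and $(\bar{K},\bar{\tau}')$ are two archetypes on the \emph{same} maximal compact $\bar{K}$. By (i) both are induced from maximal simple types $(\bar{J},\mu)$ and $(\bar{J}',\mu')$ with $\bar{J},\bar{J}'\subset\bar{K}$. Since $\bar{\tau},\bar{\tau}'$ are both $[\bar{G},\bar{\pi}]_{\bar{G}}$-types for the \emph{same} $\bar{\pi}$, the corresponding maximal simple types both occur in $\bar{\pi}$, hence by the uniqueness part of the $\bar{G}$-theorem (intertwining implies $\bar{G}$-conjugacy of maximal simple types in $\bar{\pi}$) there is $g\in\bar{G}$ with ${}^g(\bar{J},\mu)=(\bar{J}',\mu')$. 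The point is then to upgrade this to a conjugacy \emph{fixing} $\bar{K}$: since both $\bar{J}$ and ${}^{g^{-1}}\bar{J}'=\bar{J}$ lie in $\bar{K}$ and $g$ conjugates one maximal simple type in $\bar{\pi}$ to another, one uses that the normalizer-theoretic constraints (the $G$-intertwining of $\lambda$ being $E^\times J$, pushed down to $\bar{G}$) force $g$ to normalize $\bar{K}$ after adjusting by an element of $\bar{K}$; hence $\bar{\tau}\simeq\bar{\tau}'$ as $\bar{K}$-representations.

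For (iii) I would argue that any maximal compact subgroup of $\bar{G}$ is $G$-conjugate (indeed $\GLN(F)$-conjugate) to one of the standard ones, and that $G$ acts transitively enough on the relevant data that any two archetypes $(\bar{K}_1,\bar{\tau}_1)$, $(\bar{K}_2,\bar{\tau}_2)$ for $\bar{\pi}$ become conjugate after moving by an element of $G$: concretely, conjugate $\bar{K}_2$ to $\bar{K}_1$ by some $h\in G$, replacing $\bar{\pi}$ by ${}^h\bar{\pi}$, which is $G$-conjugate to $\bar{\pi}$ (all the $\bar{G}$-constituents of $\pi|_{\bar{G}}$ form a single $G$-orbit); then invoke (ii) on $\bar{K}_1$. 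Finally (iv) is a counting statement: by (i) and (iii) the archetypes of $\bar{\pi}$ biject with $\bar{G}$-conjugacy (equivalently $G$-conjugacy, by (iii)) classes of maximal simple types $(\bar{J},\mu)$ contained in $\bar{\pi}$, modulo which are $\bar{K}$-equivalent; the number of such is governed by $|J:\bar{J}J^1|$-type indices and the Clifford theory of $\lambda|_{\bar{J}}$, and a direct computation — using that $J/\bar{J}J^1\cong F^\times/(F^\times)^?\det(\cdots)$ and the coprimality $\gcd(e_\pi,N)=1$ — yields exactly $e_{\bar\pi}=e_\pi$ classes.

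The main obstacle I anticipate is part (ii), specifically promoting the $\bar{G}$-conjugacy of the underlying maximal simple types to a conjugacy that preserves the ambient maximal compact $\bar{K}$: the intertwining-implies-conjugacy results give a conjugating element $g\in\bar{G}$, but controlling its interaction with $\bar{K}$ requires a careful analysis of how the normalizer $\KK_\AA$ and the group $E^\times J$ sit inside $\bar{G}$, and of which $\bar{G}$-conjugates of a maximal simple type lie in a fixed $\bar{K}$. Everything else is bookkeeping with Mackey's formula and the already-quoted structure theorems, but this normalizer analysis — and correctly tracking the effect of the essentially-tame hypothesis on it — is where the real work lies, and it is also what pins down the count in (iv).
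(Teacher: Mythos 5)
Your plan for (i), and for producing the inducing maximal simple types in (ii), follows the paper's proof. The genuine gap lies in (iii) and (iv), which is where the paper's actual work is. For (iii): if you conjugate $\bar{K}_2$ onto $\bar{K}_1$ by an arbitrary $h\in G$, the pair $({}^h\bar{K}_2,{}^h\bar{\tau}_2)$ is a $[\bar{G},{}^h\bar{\pi}]_{\bar{G}}$-archetype, and ${}^h\bar{\pi}$ need not be isomorphic to $\bar{\pi}$ (it is merely another member of the $G$-orbit of constituents of $\pi|_{\bar{G}}$); part (ii) only compares archetypes for the \emph{same} $\bar{\pi}$ on the same $\bar{K}$, so it cannot be invoked and the argument stalls. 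What is needed is a conjugating element that moves between the relevant maximal compacts while fixing the maximal simple type: the paper, having reduced to a single $(\bar{J},\mu)$ arising from $[\AA,n,0,\beta]$, conjugates by powers $\varpi_E^j$ of a uniformizer of $E=F[\beta]$, which normalize $(\bar{J},\mu)$ (hence preserve $\bar{\pi}$) and carry $\bar{K}$ through the $e_\AA$ pairwise non-$\bar{G}$-conjugate groups ${}^{\varpi_E^j}\bar{K}$ containing $\bar{J}$.

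For (iv) your proposed mechanism counts the wrong thing. All maximal simple types contained in $\bar{\pi}$ form a single $\bar{G}$-conjugacy class (intertwining implies conjugacy), so a count of "classes of maximal simple types in $\bar{\pi}$" refined by Clifford-theoretic indices such as $[J:\bar{J}J^1]$ or the decomposition of $\lambda|_{\bar{J}}$ measures the size of the $L$-packet, not the number of archetypes of one fixed $\bar{\pi}$, and would in fact suggest the answer $1$ rather than $e_{\bar{\pi}}$. The correct invariant is the number of $\bar{G}$-conjugacy classes of maximal compact subgroups of $\bar{G}$ into which $\bar{J}$ embeds. The groups ${}^{\varpi_E^j}\bar{K}$, $0\leq j\leq e_\AA-1$, give $e_\AA$ such classes, and the substantive step --- entirely absent from your proposal --- is excluding any further containment: the paper shows an extra containment would force $\bar{J}\subset\bar{U}_\CC$ for a hereditary order $\CC$ of lattice period $e_\AA+1$, and then, using a Zsigmondy prime $r$ dividing $q^{N/e_\AA}-1$ but no $q^s-1$ with $s<N/e_\AA$, realized by an element of $\ker\NNN_{\kkk_L/\kkk_E}\subset\bar{J}/\bar{J}^1$, derives a contradiction because the corresponding block of the reduction $\prod_i\mathbf{GL}_{N_i}(\kkk_F)$ of $U_\CC$ lies in a proper Levi subgroup of $\mathbf{GL}_{N/e_\AA}(\kkk_F)$ whose order $r$ cannot divide. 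Without an argument of this kind you obtain neither the exact count $e_{\bar{\pi}}$ nor, as explained above, part (iii). By contrast, the step you single out as the main obstacle (upgrading the conjugacy in (ii) so as to respect $\bar{K}$) is dispatched in one line in the paper, using that the normalizer of $\bar{K}$ in $\bar{G}$ is $\bar{K}$ itself; and note that the multiplicity-one claim in (i) is proved there from $I_{\bar{G}}(\mu)=\bar{J}$, not from the essentially tame hypothesis.
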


\begin{proof}
We have already established the first claim in \emph{(i)}. To see that $\bar{\tau}$ is contained in $\bar{\pi}$ with multiplicty one, note that by Frobenius reciprocity the multiplicity with which $\bar{\tau}$ appears in $\bar{\pi}|_{\bar{K}}$ is equal to the multiplicity with which $\mu$ appears. By Frobenius reciprocity and a Mackey decomposition we have
\[\Hom_{\bar{J}}(\mu,\bar{\pi}|_{\bar{J}})=\Hom_{\bar{J}}(\mu,\Res_{\bar{J}}^{\bar{G}}\cInd_{\bar{J}}^{\bar{G}}\ \mu)=\bigoplus_{\bar{J}\backslash \bar{G}/\bar{J}}\Hom_{\bar{J}}(\mu,\Ind_{^g\bar{J}\cap\bar{J}}^{\bar{J}}\Res_{^g\bar{J}\cap\bar{J}}^{^g\bar{J}}\ ^g\mu.
\]
Since $I_{\bar{G}}(\mu)=\bar{J}$, the summands in this latter space are non-zero if and only if $g\in\bar{J}$; hence there exists a unique non-zero summand which is one-dimensional.\\

To see \emph{(ii)}, suppose that $\bar{\tau}$ and $\bar{\tau}'$ are two $[\bar{G},\bar{\pi}]_{\bar{G}}$-archetypes defined on $\bar{K}$. Then there exist maximal simple $[\bar{G},\bar{\pi}]_{\bar{G}}$-types $(\bar{J},\mu)$ and $(\bar{J}',\mu')$ which induce to give $\bar{\tau}$ and $\bar{\tau}'$, respectively. Since these maximal simple types both induce irreducibly to $\bar{\pi}$, they intertwine and hence are conjugate. So the induced representations $\bar{\tau}$ and $\bar{\tau}'$ are conjugate by an element of the normalizer of $\bar{K}$, which is simply $\bar{K}$; it follows that $\bar{\tau}\simeq\bar{\tau}'$.\\

Similarly, if $(\bar{K}',\bar{\tau}')$ is another $[\bar{G},\bar{\pi}]_{\bar{G}}$-archetype, then there exist maximal simple types $(\bar{J},\mu)$ and $(\bar{J}',\mu')$ which induce to give $\bar{\tau}$ and $\bar{\tau}'$. As before, these two types are conjugate and so, without loss of generality we may redefine $\bar{K}'$ so that $\mu=\mu'$. We fix a standard set of representatives of $\bar{G}$-conjugacy classes of maximal compact subgroups of $\bar{G}$ which contain $\bar{J}$. We already have one such group in $\bar{K}$. The maximal simple type $(\bar{J},\mu)$ comes from a simple stratum $[\AA,n,0,\beta]$ with $\bar{K}\supset\bar{U}_\AA$; let $\varpi_E$ be a uniformizer of $E=F[\beta]$. Then $\bar{J}$ is contained in each of the groups $^{\varpi_E^j}\bar{K}$, for $0\leq j\leq e_\AA-1$. We claim that there do not exist any other $\bar{G}$-conjugacy classes of maximal compact subgroups of $\bar{G}$ into which $\bar{J}$ admits a containment.\\

Let $\nu$ denote the matrix with $\nu_{i,i+1}=1$ for $1\leq i\leq N-1$, $\nu_{N,1}=\varpi_F$ and $\nu_{i,j}=0$ otherwise; then $\nu$ is a uniformizer of a degree $N$ totally ramified extension of $F$. The $N$ compact open subgroups $^{\nu^j}\bar{K}$, $0\leq j\leq N-1$ form a system of representatives of the $N$ conjugacy classes of maximal compact subgroups of $\bar{G}$. There exists a choice $\varpi_E$ of uniformizer of $E$ such that $^{\varpi_E^j}\bar{K}\subset{}^{\nu^{Nj/e_\AA}}\bar{K}$ for each $0\leq j\leq e_\AA-1$. The group $\bar{J}/\bar{J}^1\simeq\mathbf{SL}_{N/[E:F]}(\kkk_E)$ contains the kernel of the norm map $\NNN_{\kkk_L/\kkk_E}$ on some degree $N/[E:F]$ extension $\kkk_L/\kkk_E$. This kernel is a cyclic group of order $\frac{q^{N/e_\AA}-1}{q-1}$. Suppose that $\bar{J}$ were contained in $^{\nu^k}\bar{K}$ for some value of $k$ other than the $e_\AA$ values constructed above. Then one would have $\bar{J}\subset\left(\bigcap_{i=1}^{e_\AA-1}{}^{\nu^{jN/e_\AA}}\bar{K}\right)\cap{}^{\nu^k}\bar{K}$. This group is equal to $\bar{U}_\CC$ for some hereditary $\dedekind$-order $\CC$ of lattice period $e_\AA+1$ (note that no issue arises if $e_\AA=N$; we have already constructed all possible archetypes).\\

By Zsigmondy's theorem, unless $N/e_\AA=2$ and $q=2^i-1$ or $N/e_\AA=6$ and $q=2$, there exists a prime $r$ dividing $q^{N/e_\AA}-1$ but not dividing $q^s-1$ for any $1\leq s \leq N/e_\AA$. If $N/e_\AA=6$ and $q=2$, let $r=63$, and if $N/e_\AA=2$ and $q=2^i-1$, let $r=4$. While in the latter two cases $r$ is composite, it will be coprime to $q^s-1$ for every $q\leq s\leq N/e_\AA$, which suffices for our purposes. Thus, via the embedding $\ker\NNN_{\kkk_L/\kkk_F}\hookrightarrow\bar{J}/\bar{J}^1$, one obtains in each case an order $r$ element of $\bar{J}\bar{J}^1$, which lifts to give an order $r$ element of $\bar{J}$. The inclusion
\[\bar{J}/\bar{J}^1\hookrightarrow J/J^1\simeq\mathbf{GL}_{N/[E:F]}(\kkk_E)\hookrightarrow\mathbf{GL}_{N/e_\AA}(\kkk_F)\hookrightarrow\prod_{i=1}^{e_\AA}\mathbf{GL}_{N/e_\AA}(\kkk_F),
\]
where the latter map is the diagonal embedding, maps $\bar{J}/\bar{J}^1$ to a block-diagonal group, the blocks of which are pairwise Galois conjugate. So each of the blocks of $\mathbf{GL}_{N/e_\AA}(\kkk_F)$ contains an order $r$ element. However, as one also has $\bar{J}\subset\bar{U}_\CC$, one again obtains an order $r$ element of $U_\CC/U_\CC^1\simeq\prod_{i=1}^{e_\AA+1}\mathbf{GL}_{N_i}(\kkk_F)$, for some partition $N=N_1+\cdots+N_{e_\AA+1}$ of $N$. Among these $N_i$, there will be $e_\AA-1$ which are equal to $N/e_\AA$, and the remaining two are distinct from $N/e_\AA$. Hence in the image of $\ker \NNN_{\kkk_E/\kkk_F}\hookrightarrow U_\CC/U_\CC^1$, one obtains an order $r$ element in a block, which is actually contained in the standard parabolic subgroup of $\mathbf{GL}_{N/e_\AA}(\kkk_F)$ corresponding to the Levi subgroup $\mathbf{GL}_{N_l}(\kkk_F)\times\mathbf{GL}_{N_k}(\kkk_F)$, for some $l+k=N/e_\AA$. But the order of this group is $\left(\prod_{i=0}^{N_l-1}q^i(q^{N_l-i}-1)\right)\cdot\left(\prod_{j=0}^{N_k-1}q^i(q^{N_k-1}-1)\right)$. So $r$ must divide one of these factors. Clearly $r$ cannot divide $q^t$ for any $t$; otherwise $r$ could not divide $q^{N/e_\AA}-1$. Also, as $N_l-i,N_k-i<N/e_\AA$ for all relevant $i$, our choice of $r$ guarantees that $r$ may not divide $|\mathbf{GL}_{N_l}(\kkk_F)\times\mathbf{GL}_{N_k}(\kkk_F)|$. This gives the desired contradiction, and so we conclude that $\bar{J}$ only admits a containment into the $e_\AA$ conjugacy classes of maximal compact subgroups of $\bar{G}$ which were constructed above. This proves \emph{(iv)}.\\

Finally, to see \emph{(iii)}, note that we have already shown that given any two $[\bar{G},\bar{\pi}]_{\bar{G}}$-archetypes of the form $(\bar{K},\bar{\tau})$ and $({}^{\varpi_E^j}\bar{K},\bar{\tau}')$ (we have seen that it is no loss of generality to take our archetypes to be of this form), there exists a maximal simple type $(\bar{J},\mu)$ arising from the simple srtatum $[\AA,n,0,\beta]$ with $\bar{J}\subset\bar{K}\cap{}^{\varpi_E^j}\bar{K}$ such that $\bar{\tau}\simeq\Ind_{\bar{J}}^{\bar{K}}\ \mu$ and $\bar{\tau}'\simeq\Ind_{\bar{J}}^{^{\varpi_E^j}\bar{K}}\ \mu$. Since $\varpi_E^j$ normalizes $\mu$ for any $j$, it follows that $\bar{\tau}'\simeq{}^{\varpi_E^j}\bar{\tau}$.
\end{proof}

\section{Proof of Theorem \ref{extension-to-K}}\label{proof-section}

It remains for us to prove Theorem \ref{extension-to-K}. Let us begin by fixing some notation, on top of that retained from the statement of the theorem. Let $[\AA,n,0,\beta]$ be a simple stratum, and let $\theta\in\Cc(\AA,0,\beta)$ be such that $\pi$ contains $\theta$. Let $\kappa$ be a fixed $\beta$-extension of $\kappa$, and suppose that $\pi$ contains the maximal simple type $\lambda=\kappa\otimes\sigma$ defined on $J=J^0(\beta,\AA)$. As usual, denote by $E$ the field extension $F[\beta]/F$, by $B$ the algebra $\End_E(V)$, and by $\BB$ the hereditary $\dedekind_E$-order $\AA\cap B$. Without loss of generality, we may assume that $J\subset U_\AA\subset K$.

\subsection{First approximation}

We begin by taking the na\"{i}ve approach, and attacking the problem via Clifford theory. This allows us to show that the representation $\Ind_{\bar{K}}^K\ \bar{\tau}$ contains only irreducible subrepresentations which are, in some sense, rather close to being types.\\

We fix, once and for all, an irreducible subrepresentation $\Psi$ of $\Ind_{\bar{K}}^K\ \bar{\tau}$ such that $\Psi$ is contained in $\pi$. Note that such a $\Psi$ clearly exists: by Frobenius reciprocity we have
\[\Hom_K(\Ind_{\bar{K}}^K\ \bar{\tau},\Res_K^G\ \pi)=\Hom_{\bar{K}}(\bar{\tau},\Res_{\bar{K}}^{\bar{G}}\Res_{\bar{G}}^G\ \pi)\neq 0,
\]
and so some irreducible subrepresentation of $\Ind_{\bar{K}}^K\ \bar{\tau}$ is contained in $\pi$.

\begin{lemma}\label{order-N}
Suppose that $\pi'$ is an irreducible representation of $G$ which contains $\Psi$. Then there exists a $\chi\in\XXX_N(F)$ such that $\pi'\simeq\pi\otimes(\chi\circ\det)$.
\end{lemma}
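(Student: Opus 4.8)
The plan is to exploit the fact that $\Psi$ is a subrepresentation of $\Ind_{\bar K}^K\,\bar\tau$, where $(\bar K,\bar\tau)$ is a $[\bar G,\bar\pi]_{\bar G}$-type, together with the rigidity of simple characters. First I would restrict $\Psi$ to the normal subgroup $\bar K = K\cap\bar G$. Since $\bar\tau$ is a $[\bar G,\bar\pi]_{\bar G}$-type, every irreducible constituent of $\Psi|_{\bar K}$ lies in $\bar\pi$; hence if $\pi'$ is any irreducible representation of $G$ containing $\Psi$, then $\pi'|_{\bar G}$ contains $\bar\pi$. Standard Clifford theory for the normal subgroup $\bar G\subset G$ with abelian quotient $G/\bar G\C_G\simeq F^\times/(F^\times)^N$ (or more precisely, the fact that two irreducible representations of $G$ sharing an irreducible constituent on restriction to $\bar G$ differ by a character of $G/\bar G$) then forces $\pi'\simeq\pi\otimes(\chi\circ\det)$ for some character $\chi$ of $F^\times$ which is trivial on $(F^\times)^N$; that is, $\chi\in\XXX_N(F)$. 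So the \emph{shape} $\pi'\simeq\pi\otimes(\chi\circ\det)$ is essentially forced by abelian Clifford theory, and the only real content is the constraint $\chi\in\XXX_N(F)$.

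The key step is therefore: why must such a $\chi$ satisfy $\chi^N\in\XXX_{\mathrm{nr}}(F)$? Here I would use that $\Psi$ is contained in $\pi$ by construction, and that $\pi$ contains the simple character $\theta\in\Cc(\AA,0,\beta)$; hence $\Psi|_{H^1(\beta,\AA)}$ — or at least some constituent visible after restricting along the chain $\bar K\supset \bar U_\AA\supset \bar U_\AA^1\supset\dots$ — sees $\theta$ on $\bar H^1:=H^1(\beta,\AA)\cap\bar G$. If $\pi'\simeq\pi\otimes(\chi\circ\det)$ also contains $\Psi$, then $\pi'$ also contains this same constituent on $\bar H^1$, so $\pi$ and $\pi\otimes(\chi\circ\det)$ share a simple character on restriction to $\bar H^1$. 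But the restriction of $\chi\circ\det$ to $H^1(\beta,\AA)$ is $\chi\circ\det = \chi^{?}$ composed through the determinant; restricted to $\bar H^1$ it is trivial, so $\pi$ and $\pi\otimes(\chi\circ\det)$ contain the \emph{same} simple character on $\bar H^1$. One then compares the simple characters of $\pi$ and of $\pi\otimes(\chi\circ\det)$ on all of $H^1(\beta,\AA)$: the latter is $\theta\cdot(\chi\circ\det|_{H^1})$. Since these two characters of $H^1$ agree on the subgroup $\bar H^1$, the character $\chi\circ\det$ is trivial on $\bar H^1 = \ker(\det)\cap H^1$, which is automatic, but the point is that $\theta$ and $\theta\cdot(\chi\circ\det)$ then both lie in $\Cc(\AA,0,\beta)$ and restrict identically to $\bar H^1$; a level/conductor computation with $\det$ shows $\chi\circ\det$ restricted to $U_\AA^1$ must be trivial, hence $\chi|_{U_E^1}$ (via $\det$ on $U_\AA^1\cap E^\times$, which detects $\varpi_E$ and the units of $E$) is sufficiently unramified to force $\chi^N$ — equivalently $\chi\circ\mathrm{N}_{E/F}$ up to the ramification index — to be unramified on $F^\times$. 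I expect the cleanest route is: $\chi\circ\det$ trivial on $\bar U_\AA^1$ plus $\det(U_\AA^1\C_G)\supset U_F^{1}$ (roughly) forces $\chi$ trivial on $U_F^1$ after raising to a bounded power dividing $N$, and tracking the exact power gives $\chi^N\in\XXX_{\mathrm{nr}}(F)$.

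The main obstacle will be this last level-comparison: controlling exactly which power of $\chi$ is forced to be unramified. The subtlety is that $\det$ on the various filtration subgroups $U_\AA^k$ has image related to $U_F^{\lceil k/e_\AA\rceil}$, and on $E^\times$ the determinant is $\mathrm{N}_{E/F}$, so keeping track of ramification degrees $e(E/F)$ versus $N$ requires care; this is precisely where essential tameness ($e_\pi$ coprime to $N$) is not yet needed but where the bookkeeping is delicate. I would organize it by first reducing to comparing $\theta$ and $\theta\otimes(\chi\circ\det)$ as simple characters — both in $\Cc(\AA,0,\beta)$ when $\chi\circ\det$ is trivial enough — and invoking the transfer/compatibility properties of simple characters under twisting by characters of the determinant, which control the level of $\chi$ directly in terms of the conductor of $\theta$; combined with the fact that $\theta$ has level at most $n/e_\AA$ on the $E^\times$-side, this pins down $\chi^N$ as unramified. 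The remaining parts of the argument (the Clifford-theoretic shape and the existence of $\Psi$) are routine.
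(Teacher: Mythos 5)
Your first half is fine and matches the paper: Frobenius reciprocity gives $\Hom_{\bar K}(\bar\tau,\pi'|_{\bar K})\neq 0$, the type property of $(\bar K,\bar\tau)$ forces $\pi'|_{\bar G}$ to contain $\bar\pi$, and the standard Clifford-theoretic fact for $\bar G\subset G$ yields $\pi'\simeq\pi\otimes(\chi\circ\det)$ with $\pi'$ supercuspidal. The gap is in the only step that carries the content of the lemma, namely $\chi\in\XXX_N(F)$. You do not prove it; you sketch a comparison of $\theta$ with $\theta\cdot(\chi\circ\det)$ on $H^1(\beta,\AA)$ and hope a ``level/conductor computation'' shows $\chi\circ\det$ is trivial on $U_\AA^1$. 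That intermediate claim is unjustified (and is not what the lemma asserts: $\chi$ may well be ramified, only $\chi^N$ need be unramified), and, more seriously, the strategy is structurally unable to reach the conclusion: since $H^1(\beta,\AA)\subset U_\AA^1=1+\PP$, one has $\det H^1\subseteq 1+\frak{p}_F$, so any comparison of characters on $H^1$ (let alone on $\bar H^1$, where, as you yourself note, the twist is invisible) can only constrain $\chi$ on a subgroup of the principal units. It can never show that $\chi^N$ is trivial on all of $\dedekind^\times$, which is what ``unramified'' requires. Arguments of the kind you sketch do appear later in the paper, but for finer statements, under the essential tameness hypothesis, and not to prove this lemma.

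The missing idea is an elementary central character comparison, which is how the paper concludes: both $\pi$ and $\pi'$ contain the same irreducible $K$-representation $\Psi$, so their central characters agree on $Z(G)\cap K=\dedekind^\times$ (they both restrict there as the scalar by which the centre acts on $\Psi$). Since $\omega_{\pi'}=\omega_\pi\cdot\chi^N$ (the determinant of a scalar matrix $z$ is $z^N$), this gives $\chi^N|_{\dedekind^\times}=1$, i.e.\ $\chi\in\XXX_N(F)$, with no appeal to simple characters, conductors, or essential tameness.
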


\begin{proof}
We have $\Hom_K(\pi'|_K,\Psi)\neq 0$, and so
\[0\neq\Hom_K(\Ind_{\bar{K}}^K\ \bar{\tau},\Res_K^G\ \pi')=\Hom_{\bar{K}}(\bar{\tau},\Res_{\bar{K}}^{\bar{G}}\Res_{\bar{G}}^G\ \pi').
\]
Hence $\pi'$ must contain $\bar{\pi}$ upon restriction to $\bar{G}$ and so, in particular, $\pi'$ must be a supercuspidal representation of the form $\pi'\simeq\pi\otimes(\chi\circ\det)$ for some character $\chi$ of $F^\times$. Comparing central characters, we see that $\omega_{\pi'}|_{\dedekind^\times}=\omega_\pi|_{\dedekind^\times}$.  We have $\omega_{\pi'}=\omega_\pi\otimes(\chi\circ\det)$, and so $\chi\circ\det$ is trivial on $\dedekind^\times$, which is to say that $\chi^N$ is unramified.
\end{proof}

\subsection{Decompositions of $\pi|_K$}

Let $\Lambda$ be an extension of $\lambda$ to $E^\times J$ such that $\cInd_{E^\times J}^G\ \Lambda\simeq\pi$. It will occasionally be convenient for us to work with slight modifications of $\lambda$ and $\Lambda$. Let $\rho=\Ind_J^{U_\AA}\ \lambda$, and $\tilde{\rho}=\cInd_{E^\times J}^{\KK_\AA}\ \Lambda$. It follows from the fact that $I_G(\lambda)=I_G(\Lambda)=E^\times J$ that both $\rho$ and $\tilde{\rho}$ are irreducible, that $\tilde{\rho}$ is an extension of $\rho$, and that $\pi\simeq\cInd_{\KK_\AA}^G\ \tilde{\rho}$. We therefore obtain two decompositions of the representation $\pi|_K$: from the realization $\pi\simeq\cInd_{E^\times J}^G\ \Lambda$ we obtain the decomposition
\begin{equation}
\label{eq:decomp1}\pi|_K=\Res_K^G\cInd_{E^\times J}^G\ \Lambda=\bigoplus_{E^\times J\backslash G/K}\Ind_{^gJ\cap K}^K\Res_{^gJ\cap K}^{^gJ}\ ^g\lambda;
\end{equation}
while from the realization $\pi\simeq\cInd_{\KK_\AA}^G\ \tilde{\rho}$ we obtain the decomposition
\begin{equation}
\label{eq:decomp2}\pi|_K=\Res_K^G\cInd_{\KK_\AA}^G\ \tilde{\rho}=\bigoplus_{\KK_\AA\backslash G/K}\Ind_{^gU_\AA\cap K}^K\Res_{^gU_\AA\cap K}^{^gU_\AA}\ ^g\rho.
\end{equation}
It is decomposition \eqref{eq:decomp1} in which we will be most interested. However, the double coset space $E^\times J\backslash G/K$ is far too complicated for us to work with directly. We therefore approach the problem via decomposition \eqref{eq:decomp2}. Following Paskunas \cite[Lemma 5.3]{paskunas2005unicity}, we fix a system of coset representatives. Namely, any coset $\KK_\AA g'K$ in $\KK_\AA\backslash G/K$ admits a diagonal representative $g=(\varpi^{a_1},\dots,\varpi^{a_N})$ such that, for all $0\leq i\leq e_\AA$, one has $a_{i(N/e_\AA)+1}\geq\cdots\geq a_{(i+1)N/e_\AA}\geq 0$, and one of the following holds:
\begin{enumerate}[(i)]
\item $a_{j(N/e_\AA)+1}\neq a_{(j+1)N/e_\AA}$, for some $0\leq j<e_\AA$; or
\item \begin{enumerate}[(a)]
\item $a_{i(N/e_\AA)+1}=a_{(i+1)N/e_\AA}$ for all $0\leq i<e$;
\item $a_1\geq 2$; and
\item there exists $1\leq j\leq N$ such that $a_k>0$ if $k<j$, and $a_k=0$ if $k\geq j$, for all $1\leq k\leq N$.
\end{enumerate}
\end{enumerate}
For the remainder of the proof, we will always take our coset representative $g$ to be of the above form.

\begin{definition}
Let $g\in G$ be a coset representative of the above form, which is such that $\KK_\AA gK\neq \KK_\AA K$.
\begin{enumerate}[(i)]
\item We say that $g$ is \emph{of type A} if the map $U_\AA\cap{}^{g^{-1}}K\rightarrow U_\AA/U_\AA^1$ is not surjective.
\item We say that $g$ is \emph{of type B} if the map $U_\AA\cap{}^{g^{-1}}K\rightarrow U_\AA/U_\AA^1$ is surjective.
\end{enumerate}
\end{definition}

\begin{remark}
If $e_\AA=1$ then all coset represetatives of this form must be of type A, and if $e_\AA=N$ then all coset representatives of this form must be of type B.
\end{remark}

Given an irreducible subrepresentation $\xi$ of $\pi|_K$, there exists some coset representative $g$ as above such that $\xi\hookrightarrow\Ind_{^gU_\AA\cap K}^K\Res_{^gU_\AA\cap K}^{^gU_\AA}\ ^g\rho$. We say that $\xi$ is a representation of type A (respectively, type B) if $g$ is a coset representative of type A (respectively, type B).\\

In the case that $\KK_\AA gK=\KK_\AA K$, the representation $\Ind_{^gJ\cap K}^K\Res_{^gJ\cap K}^{^gJ}\ ^g\lambda$ is equal to $\Ind_J^K\ \lambda$, which is the unique $[G,\pi]_G$-archetype. We are thus reduced to three possibilities:
\begin{itemize}
\item the representation $\Psi$ is isomorphic to $\Ind_J^K\ \lambda$; or
\item the representation $\Psi$ is of type A; or
\item the representation $\Psi$ is of type B.
\end{itemize}
In each of the latter two cases, we will argue to obtain a contradiction. It follows that $\Psi$ is a $[G,\pi]_G$-type; whence the desired result.

\subsection{Case 1: $\Psi$ is of type A}

In the case that $\Psi$ is of type A, we may exploit the failure of the map $U_\AA\cap{}^{g^{-1}}K\rightarrow U_\AA/U_\AA^1$ to be surjective in order to turn the problem into one regarding the finite group $J/J^1$. Denote by $H$ the image in $J/J^1$ of $J\cap{}^{g^{-1}}K$. The crucial result is the following observation of Pa\v{s}k\={u}nas:

\begin{lemma}[{\cite[Proposition 6.8]{paskunas2005unicity}}]
For every irreducible representation $\xi$ of $\sigma|_H$ there exists an irreducible representation $\sigma'\not\simeq\sigma$ of $J/J^1$ such that $\Hom_H(\sigma'|_H,\xi)\neq 0$.
\end{lemma}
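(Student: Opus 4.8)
The plan is to reduce the statement to a concrete fact about cuspidal representations of $\mathbf{GL}_m(\kkk_E)$ (with $m = N/[E:F]$) restricted to the proper subgroup $H$, and then to use the essential tameness hypothesis --- which forces $[E:F]$ to divide $N$ in a way coprime to $e_\AA$ --- to produce the required ``companion'' $\sigma'$. First I would observe that, since $g$ is of type A, the image $H$ of $J \cap {}^{g^{-1}}K$ in $J/J^1 \simeq \mathbf{GL}_m(\kkk_E)$ is a \emph{proper} subgroup: indeed the composite $J \cap {}^{g^{-1}}K \to J/J^1$ factors through $(U_\AA \cap {}^{g^{-1}}K) \to U_\AA/U_\AA^1$ intersected with $B^\times$, and type A says the latter is not onto, so its image lies in a proper parabolic (in fact, from Pa\v{s}k\={u}nas's coset analysis, $H$ is contained in the group of $\kkk_E$-points of a proper parabolic subgroup of $\mathbf{GL}_m$, or in $\mathbf{GL}_m(\kkk_{E'})$ for a proper subfield, depending on the shape of $g$).

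The heart of the argument is then the finite-group-theoretic claim: if $\sigma$ is an irreducible cuspidal representation of $\mathbf{GL}_m(\kkk_E)$ and $H$ is a proper subgroup of the type arising above, then $\sigma|_H$ is \emph{not} multiplicity-free-with-a-unique-constituent-among-irreducibles-of-$\mathbf{GL}_m$; more precisely, I would show that any irreducible $\xi$ of $\sigma|_H$ also occurs in some $\sigma'|_H$ with $\sigma' \not\simeq \sigma$. For $H$ inside a proper parabolic $P = MU$, this follows because the restriction of a cuspidal $\sigma$ to $P$ is (by Clifford theory / Mackey together with the fact that cuspidals are not induced from proper parabolics) large --- its restriction to $M$ contains many irreducibles --- and one can transport $\xi$ across a different cuspidal $\sigma'$ of the same central character; the key input is a dimension count showing that $\sum_{\sigma'} \langle \sigma'|_H, \xi \rangle$ is too large for the multiplicity against $\sigma$ alone to absorb it, using that $[\mathbf{GL}_m(\kkk_E) : H]$ is small compared to the number and sizes of cuspidal representations of $\mathbf{GL}_m(\kkk_E)$ (here is exactly where one invokes that $m \geq 2$ in the non-degenerate case, the degenerate $m=1$ case being handled separately). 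For $H$ inside $\mathbf{GL}_m(\kkk_{E'})$ with $\kkk_{E'} \subsetneq \kkk_E$, one argues instead via Galois conjugation: $\sigma|_{\mathbf{GL}_m(\kkk_{E'})}$ and $({}^\gamma\sigma)|_{\mathbf{GL}_m(\kkk_{E'})}$ coincide for $\gamma \in \Gal(\kkk_E/\kkk_{E'})$, giving the needed distinct $\sigma'$ outright as long as $\sigma$ is not itself $\Gal$-fixed --- and if it were $\Gal$-fixed one falls back to the parabolic case or to a direct character-theoretic estimate.

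I expect the main obstacle to be the uniformity of the dimension/multiplicity estimate across the various shapes that $H$ can take: the coset representatives $g$ of type A are only loosely controlled, so $H$ need not be a maximal parabolic or a maximal subfield group, and one must either enumerate the possibilities coming from Pa\v{s}k\={u}nas's normal form (cases (i) and (ii) in the stratification of $\KK_\AA \backslash G / K$) or find a single inequality --- roughly $[\mathbf{GL}_m(\kkk_E):H] < (\text{number of cuspidals of }\mathbf{GL}_m(\kkk_E)\text{ with given central character})$ --- that covers them all. The estimate on the number of cuspidals of $\mathbf{GL}_m(\kkk_E)$ with fixed central character is standard (it grows like $q_E^{m-1}/m$ up to lower-order terms), and the index $[\mathbf{GL}_m(\kkk_E):H]$ for $H$ inside a proper parabolic is at least $q_E^{m-1}$-ish, so the two are genuinely comparable and one must be slightly careful; I would isolate this as a separate lemma and prove it by the explicit Green-theoretic formulas rather than trying to argue abstractly. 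Once that inequality is in hand, pigeonhole gives a $\sigma' \not\simeq \sigma$ with $\Hom_H(\sigma'|_H,\xi) \neq 0$, completing the proof.
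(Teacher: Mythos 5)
A preliminary remark on the comparison: the paper does not prove this lemma at all --- it is imported verbatim from Pa\v{s}k\={u}nas (his Proposition 6.8) --- so the benchmark is his argument, whose key mechanism the paper itself reuses in its concluding section. Your first step (the structure of $H$: it lies either in a proper parabolic subgroup of $J/J^1\simeq\mathbf{GL}_m(\kkk_E)$, $m=N/[E:F]$, or among the $\kkk$-rational points for a proper subfield $\kkk_F\subset\kkk\subsetneq\kkk_E$) is the right starting point and is exactly what Pa\v{s}k\={u}nas's Lemma 6.5 and Corollary 6.6 supply. The trouble is with both halves of your core argument. In the parabolic case your pigeonhole plan does not work: the inequality you want, that $[\mathbf{GL}_m(\kkk_E):H]$ is smaller than the number of cuspidal representations with fixed central character, fails by your own estimates (the index of $H$ is at least that of a maximal parabolic, of size roughly $q_E^{m-1}$, while the number of such cuspidals is roughly $q_E^{m-1}/m$), and even if it held, an abundance of cuspidal representations does not by itself force any of them other than $\sigma$ to contain $\xi$ on restriction to $H$. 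What must be shown is that $\mathrm{Ind}_H^{J/J^1}\,\xi$ is not $\sigma$-isotypic, and the clean route --- the one the paper itself uses in the totally ramified case of its Conclusion, and essentially Pa\v{s}k\={u}nas's --- needs no counting: if $H\subset P$ and $\bar{U}$ is the unipotent radical of the opposite parabolic, then $H\cap\bar{U}=1$, so by Mackey the restriction of $\mathrm{Ind}_H^{J/J^1}\,\xi$ to $\bar{U}$ contains copies of the regular representation of $\bar{U}$; hence there are nonzero $\bar{U}$-fixed vectors, hence a non-cuspidal irreducible constituent $\sigma'$, which is automatically not isomorphic to the cuspidal $\sigma$ and satisfies $\Hom_H(\sigma'|_H,\xi)\neq 0$ by Frobenius reciprocity.

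The subfield case is where your proposal has the more serious hole. The Galois observation is correct: for $\gamma\in\Gal(\kkk_E/\kkk)$ one has ${}^\gamma\sigma|_H=\sigma|_H$, so if $\sigma$ is not $\Gal(\kkk_E/\kkk)$-stable you are done. But the escape clause when $\sigma$ is Galois-stable is empty: such cuspidals exist in abundance (they are precisely those in the image of base change), there is no ``parabolic case'' to fall back on since $\mathbf{GL}_m(\kkk)$ is contained in no proper parabolic subgroup of $\mathbf{GL}_m(\kkk_E)$, and ``a direct character-theoretic estimate'' is a placeholder rather than an argument. Nor does twisting by characters trivial on $\det(H)$ obviously rescue it: the group of characters of $\kkk_E^\times$ trivial on $\kkk^\times$ can be contained in the stabilizer of $\sigma$ under determinant twists. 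So, as written, your argument establishes the lemma only for non-Galois-stable $\sigma$ in the subfield case and not at all in the parabolic case; the two missing ingredients are the opposite-unipotent-radical argument above and a genuine treatment of Galois-stable cuspidals in the subfield situation.
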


The use of this is as follows. If $\Psi$ is contained in $\Ind_{^gJ\cap K}^K\Res_{^gJ\cap K}^{^gJ}\ ^g(\kappa\otimes\sigma)$, then there exists an irreducible subrepresentation $\xi$ of $\sigma|_{H}$ such that $\Psi$ is contained in $\Ind_{^gJ\cap K}^K\Res_{^gJ\cap K}^{^gJ}\ ^g(\kappa\otimes\xi)$. This latter representation is contained in $\Ind_{^gJ\cap K}^K\Res_{^gJ\cap K}^{^gJ}\ ^g(\kappa\otimes\sigma')$, and hence so is $\Psi$. There are two cases to consider. We first examine the case that $\sigma'$ may be taken to be non-cuspidal.

\begin{lemma}\label{type-A-non-cuspidal}
Suppose that there exists a non-cuspidal irreducible representation $\sigma'$ of $J/J^1$ such that $\Psi$ is an irreducible subrepresentation of $\Ind_{^gJ\cap K}^K\Res_{^gJ\cap K}^{^gJ}\ ^g(\kappa\otimes\sigma')$. Then there exists a non-cuspidal irreducible representation $\pi'$ of $G$ which contains $\Psi$.
\end{lemma}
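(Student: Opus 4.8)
\textbf{Proof proposal for Lemma \ref{type-A-non-cuspidal}.}

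The plan is to transfer the non-cuspidality of $\sigma'$ from the level of the finite reductive group $J/J^1$ to the level of $G$ via parabolic induction, using the functor $\KKK_\kappa$ and the explicit structure of $E^\times J$-extensions from the Bushnell--Kutzko theory. Since $\sigma'$ is non-cuspidal, it occurs in $\Ind_{\bar{P}}^{J/J^1}\ \sigma'_0$ for some proper parabolic subgroup $\bar{P}$ of $J/J^1\simeq\mathbf{GL}_{N/[E:F]}(\kkk_E)$ with Levi quotient $\bar{M}$ and some representation $\sigma'_0$ of $\bar{M}$; pulling back along $J\to J/J^1$, the preimage $P$ of $\bar{P}$ in $J$ is contained in (the $J$-part of) a proper parabolic subgroup of $G$ whose Levi $L$ is of the shape $\mathbf{GL}_{N_1}(F)\times\cdots\times\mathbf{GL}_{N_t}(F)$, with the $N_i$ dictated by the block sizes of $\bar{M}$. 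First I would recall that $\kappa\otimes\sigma'$ (or rather an irreducible constituent of $\kappa\otimes\Ind_P^J\ (\text{inflation of }\sigma'_0)$) is, by the theory of covers and the compatibility of the Bushnell--Kutzko construction with parabolic induction (see \cite{bushnell1999semisimple} or the relevant part of \cite{bushnell1993admissible}), a \emph{cover} of a simple type on the Levi $L$; that is, $(J\cap L, \lambda_L)$ is a maximal simple type for a supercuspidal $\pi_L$ of $L$, and $(J,\kappa\otimes\sigma')$ is a $G$-cover of $(J\cap L,\lambda_L)$ in the sense of Bushnell--Kutzko.

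Granting this, the key step is to invoke the defining property of a cover: the parabolically induced representation $\Ind_P^G\ \pi_L$ (normalised induction from the parabolic $P$ with Levi $L$) contains $\lambda:=\kappa\otimes\sigma'$ upon restriction to $J$, because covers are constructed precisely so that $\Hom_J(\Ind_P^G\pi_L,\kappa\otimes\sigma')\neq 0$ — indeed the cover property guarantees that every irreducible constituent of $\Ind_P^G\ \pi_L$ that is "supported" on the right Bernstein component contains $\kappa\otimes\sigma'$, and there is at least one. Then I would set $\pi'$ to be any irreducible subquotient of $\Ind_P^G\ \pi_L$ containing $\kappa\otimes\sigma'$; such a $\pi'$ is non-cuspidal, being a subquotient of a proper parabolic induction (here one uses that $L$ is a proper Levi, which holds because $\bar{P}$ was a proper parabolic of $J/J^1$, which in turn is where the hypothesis that $\sigma'$ is non-cuspidal is used). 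Finally, since $\Psi$ is an irreducible subrepresentation of $\Ind_{^gJ\cap K}^K\Res_{^gJ\cap K}^{^gJ}\ ^g(\kappa\otimes\sigma')$, and $\kappa\otimes\sigma'$ appears in $\pi'|_J$, a Frobenius reciprocity / Mackey argument exactly parallel to the one used to produce $\Psi$ in the first place shows that (a $G$-conjugate of) $\pi'$ contains $\Psi$; one absorbs the conjugation since the whole setup for $\Psi$ only sees $K$-conjugacy, and replacing $\pi'$ by a conjugate keeps it non-cuspidal.

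The main obstacle I anticipate is \emph{correctly identifying $\kappa\otimes\sigma'$ as a cover}, i.e. matching the combinatorics of the parabolic $\bar{P}\subset J/J^1$ with an honest parabolic of $G$ and verifying that the $\beta$-extension $\kappa$ restricts compatibly — this is where one must be careful about the difference between the genuinely simple case $E\neq F$ and the degenerate principal-order case, and about the fact that $g$ being of type A only tells us $H=$ image of $J\cap{}^{g^{-1}}K$ is a proper subgroup of $J/J^1$, not literally a parabolic; the passage from "$\sigma'$ non-cuspidal" (via Pa\v{s}k\={u}nas's Proposition 6.8) to "$\sigma'$ occurs in a proper parabolic induction" is automatic for $\mathbf{GL}$ over a finite field, so the real content is the cover statement. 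An alternative, more hands-on route that avoids the full cover machinery would be to directly construct $\pi'$ by taking $\pi_L=\cInd_{E^\times(J\cap L)}^L\ \Lambda_L$ for a suitable extension $\Lambda_L$ of $\lambda_L$ and then analysing $\Ind_P^G\ \pi_L$ via Mackey and the explicit Iwahori-type decomposition of $J$ with respect to $P$; this is more self-contained but computationally heavier, and I would fall back on it only if the citation to the cover formalism turns out not to apply verbatim in the $\SLN$-adjacent normalisations used here.
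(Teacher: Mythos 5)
Your proposal diverges from the paper's argument in two places, and one of them is a genuine gap: the final step, where you must show that your non-cuspidal $\pi'$ actually contains the \emph{specific} representation $\Psi$ of $K$. Choosing $\pi'$ to be an irreducible subquotient of $\Ind_P^G\,\pi_L$ containing $\kappa\otimes\sigma'$ and then invoking ``a Frobenius reciprocity / Mackey argument'' only yields the following: since $\pi'$ contains ${}^g(\kappa\otimes\sigma')$ on ${}^gJ$, restriction to ${}^gJ\cap K$ and Frobenius reciprocity show that $\pi'|_K$ shares \emph{some} irreducible constituent with $\Ind_{{}^gJ\cap K}^K\Res_{{}^gJ\cap K}^{{}^gJ}\ {}^g(\kappa\otimes\sigma')$. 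That induced representation is far from irreducible, and nothing in your argument forces the shared constituent to be $\Psi$ rather than another one; indeed, if it were automatic, the conclusion would contradict Lemma \ref{order-N} outright, which is exactly why the choice has to be made carefully. The paper handles this by using the embedding $\Psi\hookrightarrow\Res_K^G\cInd_J^G(\kappa\otimes\sigma')$, letting $W$ be the subrepresentation of $\cInd_J^G(\kappa\otimes\sigma')$ generated by a vector generating $\Psi$, and observing that every irreducible quotient of $W$ automatically contains $\Psi$; the work then goes into proving that such a quotient is non-cuspidal. Without this (or some substitute pinning $\Psi$ to $\pi'$), the lemma is not proved, and the ``replace $\pi'$ by a $G$-conjugate'' remark does not repair it.

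The other divergence is your cover step, which is overstated but closer to being repairable. The pair $(J,\kappa\otimes\sigma')$ is not itself a $G$-cover: covers are carried by smaller groups $J_P\subset J$ attached to the parabolic, and what one can expect is a containment $\kappa\otimes\sigma'\subset\Ind_{J_P}^J\lambda_P$ with $(J_P,\lambda_P)$ a cover of a type on a proper Levi; since the cuspidal support of $\sigma'$ need not be homogeneous, this requires the full semisimple-types machinery of Bushnell--Kutzko rather than anything in \cite{bushnell1993admissible} alone, so it is a substantial citation to nail down, as you anticipated. Granting it, your route would show that every irreducible representation of $G$ containing $\kappa\otimes\sigma'$ is non-cuspidal, which is the first half of the paper's proof; the paper instead gets this elementarily from the uniqueness of the conjugacy class of simple characters in a supercuspidal together with the intertwining bound of \cite[Proposition 5.3.2]{bushnell1993admissible} and the finite-group intertwining results \cite[Propositions 6.15, 6.16]{paskunas2005unicity}, avoiding covers entirely. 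If you graft the paper's $W$-construction onto your cover argument, the proof goes through; as written, the cover identification is unproven and, more importantly, the containment of $\Psi$ in $\pi'$ does not follow.
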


\begin{proof}
Let $\Sigma$ be any non-cuspidal irreducible representation of $J/J^1$. Restricting to $H^1$, the representation $\kappa\otimes\Sigma$ is isomorphic to a sum of copies of $\theta$, and so any irreducible representation $\pi'$ of $G$ containing $\kappa\otimes\Sigma$ must contain the simple character $\theta$. If such a representation $\pi'$ were supercuspidal, then it would contain some maximal simple type $(J,\lambda')$, with $\lambda'$ containing $\theta$. Since a supercuspidal representation may only contain a single conjugacy class of simple characters, it must be the case that $\lambda'=\kappa\otimes\sigma''$ for some cuspidal representation $\sigma''$ of $J/J^1$. Performing a Mackey decomposition, we obtain
\[\pi'|_J=\bigoplus_{J\backslash G/J}\Ind_{^hJ\cap J}^J\Res_{^hJ\cap J}^{^hJ}\ ^h\lambda'.
\]
Let $h$ be a coset representative such that $\kappa\otimes\Sigma\hookrightarrow\Ind_{^hJ\cap J}^J\Res_{^hJ\cap J}^{^hJ}\ ^h\lambda'$. By Frobenius reciprocity, it follows that $h$ intertwines $\kappa\otimes\Sigma$ and $\kappa\otimes\sigma''$. By \cite[Proposition 5.3.2]{bushnell1993admissible}, this intertwining set is contained in $J\cdot I_{B^\times}(\Sigma|_{U_\BB},\sigma''|_{U_\BB})\cdot J$; but by \cite[Proposition 6.15]{paskunas2005unicity} the intertwining set $I_{B^\times}(\Sigma|_{U_\BB},\sigma''|_{U_\BB})$ is empty, giving a contradiction. So we conclude that any irreducible representation $\pi'$ of $G$ containing a representation of $J$ of the form $\kappa\otimes\Sigma$ with $\Sigma$ a non-cuspidal representation of $J/J^1$ must itself be non-cuspidal.\\

We now return to the situation at hand, where $\sigma'$ is a non-cuspidal irreducible representation of $J/J^1$ such that $\Psi$ is an irreducible subrepresentation of $\Ind_{^gJ\cap K}^K\Res_{^gJ\cap K}^{^gJ}\ ^g(\kappa\otimes\sigma')\hookrightarrow\Res_K^G\cInd_J^G\ (\kappa\otimes\sigma')$. Since $\Psi$ is irreducible, it is generated by a single vector $w$, say. Consider the subrepresentation $W$ of $\cInd_J^G\ (\kappa\otimes\sigma')$ generated by $w$. This representation $W$ admits an irreducible quotient $\pi'$ which contains $\Psi$. Hence it suffices for us to show that such an irreducible quotient of $W$ contains a representation of the form $\kappa\otimes\Sigma$, with $\Sigma$ a non-cuspidal irreducible representation of $J/J^1$. As $W\hookrightarrow\cInd_J^G\ (\kappa\otimes\sigma')$, we have
\[W|_J\hookrightarrow\Res_J^G\cInd_J^G\ (\kappa\otimes\sigma')=\bigoplus_{J\backslash G/J}\Ind_{^hJ\cap J}^J\Res_{^hJ\cap J}^{^hJ}\ ^h(\kappa\otimes\sigma').
\]
Since $W$ contains $\theta$ upon restriction to $H^1$, it follows that $W\in\Rep^{\SS_\theta}(G)$, as $W$ is generated by a single vector. So any irreducible quotient of $W$ contains $\kappa\otimes\sigma''$ for some irreducible representation $\sigma''$ of $J/J^1$, and it remains to show that $\sigma''$ may not be cuspidal. If $\sigma''$ were cuspidal then we would obtain an inclusion $\kappa\otimes\sigma''\hookrightarrow\Ind_{^hJ\cap J}^J\Res_{^hJ\cap J}^{^hJ}\ ^h(\kappa\otimes\sigma')$ for some $h\in G$, which is to say that $I_G(\kappa\otimes\sigma',\kappa\otimes\sigma'')$. Applying \cite[Proposition 6.16]{paskunas2005unicity}, we see that this intertwining set may only be non-empty if $\sigma''$ is non-cuspidal.
\end{proof}

But we know by Lemma \ref{order-N} that $\Psi$ may only be contained in supercuspidal representations of $G$, leading us to a contradiction if $\sigma'$ may be taken to be non-cuspidal. So it remains to consider the case that any such representation $\sigma'$ must be cuspidal.

\begin{lemma}\label{type-A-dichotomy}
Suppose that the representation $\Psi$ is of type A, corresponding to the coset representative $g$. Then one of the following two statements is true:
\begin{enumerate}[(i)]
\item There exists a non-cuspidal irreducible representation $\pi'$ of $G$ which contains $\Psi$; or
\item The only irreducible representations $\sigma'$ of $J/J^1$ such that $\Psi$ is contained in $\Ind_{^gJ\cap K}^K\Res_{^gJ\cap K}^{^gJ}\ ^g(\kappa\otimes\sigma)$ are of the form $\sigma'\simeq\sigma\otimes(\chi\circ\det)$, for some $\chi\in\XXX_N(F)$ which is trivial on $\det J^1$.
\end{enumerate}
\end{lemma}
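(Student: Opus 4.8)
The plan is to analyse, as in Paškūnas's treatment of the type A case for $\GLN$, the finite-group situation inside $J/J^1\simeq\mathbf{GL}_{N/[E:F]}(\kkk_E)$, and to show that the only way $\Psi$ can fail to force a non-cuspidal ambient representation is for the twisting parameter to be constrained as in (ii). First I would unwind the hypothesis: $\Psi$ being of type A and coming from the coset $g$ means, by the discussion preceding the lemma, that there is an irreducible constituent $\xi$ of $\sigma|_H$ (where $H$ is the image of $J\cap{}^{g^{-1}}K$ in $J/J^1$) with $\Psi$ embedding into $\Ind_{^gJ\cap K}^K\Res_{^gJ\cap K}^{^gJ}\ {}^g(\kappa\otimes\xi)$, and by \cite[Proposition 6.8]{paskunas2005unicity} we may inflate $\xi$ to a non-$\sigma$ irreducible $\sigma'$ of $J/J^1$ still containing $\Psi$. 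If \emph{some} such $\sigma'$ can be taken non-cuspidal, then Lemma \ref{type-A-non-cuspidal} produces a non-cuspidal irreducible $\pi'$ of $G$ containing $\Psi$, which is conclusion (i). So the content of the lemma is the complementary case: \emph{assume every} irreducible $\sigma'$ of $J/J^1$ with $\Psi\hookrightarrow\Ind_{^gJ\cap K}^K\Res_{^gJ\cap K}^{^gJ}\ {}^g(\kappa\otimes\sigma)$ is cuspidal, and deduce that each such $\sigma'$ is of the form $\sigma\otimes(\chi\circ\det)$ with $\chi\in\XXX_N(F)$ trivial on $\det J^1$.

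The key step is to identify the set $S$ of cuspidal $\sigma'$ intertwined with $\sigma$ by the relevant finite-group data. Translating the containment $\Psi\hookrightarrow\Ind_{^gJ\cap K}^K\Res\ {}^g(\kappa\otimes\sigma)$ and $\Psi\hookrightarrow\Ind_{^gJ\cap K}^K\Res\ {}^g(\kappa\otimes\sigma')$ via Frobenius reciprocity, one gets $\Hom_{H}(\sigma|_H,\sigma'|_H)\neq0$ — i.e. $\sigma$ and $\sigma'$ share an irreducible constituent on restriction to $H$. Here $H$ is a proper subgroup of $\mathbf{GL}_{N/[E:F]}(\kkk_E)$ arising as a block-upper-triangular (parabolic-type) subgroup dictated by the lattice-chain combinatorics of the coset representative $g$ of type A; the failure of $U_\AA\cap{}^{g^{-1}}K\to U_\AA/U_\AA^1$ to be surjective is precisely what makes $H$ contained in a proper parabolic $P=MN_P$. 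I would then invoke the representation theory of $\mathbf{GL}_n(\kkk)$ (Green/Deligne--Lusztig theory, as used by Paškūnas): two cuspidal representations of $\mathbf{GL}_n(\kkk)$ which agree on restriction to a proper parabolic must differ by a character of $\mathbf{GL}_n(\kkk)$ that is trivial on $SL_n$ — equivalently a character factoring through $\det$. Pulling this back, $\sigma'\simeq\sigma\otimes(\bar\chi\circ\det)$ for a character $\bar\chi$ of $\kkk_E^\times$ (or of $\kkk_F^\times$, after tracking the $\mathbf{GL}_{N/[E:F]}(\kkk_E)\hookrightarrow\mathbf{GL}_{N/e_\AA}(\kkk_F)$ business as in the proof of Theorem \ref{SLN-unicity}(iv)), and one checks that $\bar\chi$ lifts to a character $\chi\in\XXX_N(F)$ — the "$\XXX_N$" rather than "$\XXX_{\mathrm{nr}}$" reflects exactly that $\bar\chi$ is a character of the residue field, i.e. $\chi^N$ unramified — and that, being inflated from $J/J^1$, this $\chi\circ\det$ is trivial on $\det J^1$. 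The clean way to organize this is via the functor $\KKK_\kappa$: since $\KKK_\kappa(\kappa\otimes\sigma)=\sigma$ and $\KKK_\kappa$ is exact, the containments among the $\Ind\Res(\kappa\otimes-)$ translate directly into containments among the $\sigma,\sigma'$ on the finite level, cleanly separating the "$\kappa$-part" (which is rigid by intertwining-implies-conjugacy for simple characters) from the "$\sigma$-part".

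The main obstacle I expect is \emph{the finite-group input}: proving that cuspidal representations of $\mathbf{GL}_n(\kkk)$ restricting compatibly to a proper parabolic can only differ by a $\det$-character. For $\GLN$ this is essentially \cite[Proposition 6.16]{paskunas2005unicity} (which already appeared in Lemma \ref{type-A-non-cuspidal}), so the real work is \emph{bookkeeping}: relating $H$, the parabolic $P$, and the diagonal embedding $\mathbf{GL}_{N/[E:F]}(\kkk_E)\hookrightarrow\prod^{e_\AA}\mathbf{GL}_{N/e_\AA}(\kkk_F)$ correctly, and verifying that a $\det$-character of the block group pulls back to one that is Galois-stable (so that it actually comes from $\kkk_F^\times$ and hence from $\XXX_N(F)$). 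A subtlety is that $\sigma=\sigma_0$ (maximal simple type, $e_{\BB}=1$), but $H$ need not be a "standard" parabolic with respect to the same flag that presents $\sigma$ as cuspidal, so one has to argue that $\sigma$ stays cuspidal — hence genuinely a Deligne--Lusztig cuspidal — relative to whatever parabolic $H$ sits in; this is automatic since cuspidality is intrinsic, but it needs to be said. Once these identifications are in place, the dichotomy is immediate: either Lemma \ref{type-A-non-cuspidal} applies (giving (i)), or every competing $\sigma'$ is cuspidal and the finite-group lemma pins it down to $\sigma\otimes(\chi\circ\det)$ with $\chi\in\XXX_N(F)$ trivial on $\det J^1$ (giving (ii)).
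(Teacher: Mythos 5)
There is a genuine gap in the cuspidal case, which is where all the content of the lemma lies. Your plan is to reduce everything to the finite level: from the two containments of $\Psi$ you extract $\Hom_H(\sigma|_H,\sigma'|_H)\neq 0$ and then invoke a rigidity statement to the effect that two cuspidal representations of $\mathbf{GL}_n(\kkk)$ sharing a constituent on (a subgroup of) a proper parabolic must differ by a determinant character. No such statement is true, and it is not what \cite[Propositions 6.15, 6.16]{paskunas2005unicity} say: those propositions bound the intertwining sets $I_{B^\times}(\cdot,\cdot)$ of representations restricted to $U_{\BB}$, not restrictions to $H$; indeed \cite[Proposition 6.8]{paskunas2005unicity}, quoted in this paper, says precisely that every constituent of $\sigma|_H$ also occurs in $\sigma'|_H$ for some $\sigma'\not\simeq\sigma$, with no control up to $\det$-twist. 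Concretely, already for $\mathbf{GL}_2(\kkk_F)$ a cuspidal representation restricts irreducibly to the Borel subgroup, and the restriction depends only on the central character; there are $q_F+1$ cuspidals with a given central character but at most two of them in any $\det$-twist orbit with that central character, so cuspidals agreeing on a proper parabolic need not be $\det$-twists of one another. A purely finite-group argument therefore cannot pin $\sigma'$ down to the form in (ii); note also that your sketch never uses the essentially tame hypothesis, which is a warning sign, since (ii) is false without some such input (this is exactly the ``twist by a character of large level'' issue flagged in the introduction).

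The paper's proof runs through the $p$-adic group instead. If $\sigma'$ is cuspidal then $\lambda'=\kappa\otimes\sigma'$ is a maximal simple type, so $\Psi$ lies in a supercuspidal subquotient $\pi'$ of $\cInd_J^G\ \lambda'$, and Lemma \ref{order-N} forces $\pi'\simeq\pi\otimes(\chi\circ\det)$ with $\chi\in\XXX_N(F)$. Uniqueness of the conjugacy class of simple characters in $\pi'$ gives that $\theta$ and $\theta(\chi\circ\det)$ are conjugate, and it is exactly the essentially tame hypothesis ($e_\AA$ coprime to $N$) that upgrades conjugacy to equality, whence $\chi$ is trivial on $\det J^1$. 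Finally, the unicity of archetypes for $\GLN$ \cite{paskunas2005unicity} gives $\Ind_J^K\lambda'\simeq\Ind_J^K(\lambda\otimes(\chi\circ\det))$, and the intertwining computation via \cite[Proposition 5.3.2]{bushnell1993admissible} shows $I_K(\lambda',\lambda\otimes(\chi\circ\det))\subset J$, so $\lambda'\simeq\lambda\otimes(\chi\circ\det)$; applying $\KKK_\kappa$ yields $\sigma'\simeq\sigma\otimes(\chi\circ\det)$. So your identification of the dichotomy and the use of Lemma \ref{type-A-non-cuspidal} for case (i) are fine, but for case (ii) you need these $p$-adic inputs (Lemma \ref{order-N}, uniqueness of simple characters plus essential tameness, and the $\GLN$ unicity theorem), which your finite-group route omits and cannot replace.
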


\begin{proof}
Let $\sigma'$ be an irreducible representation of $J/J^1$ such that $\Psi$ is contained in $\Ind_{^gJ\cap K}^K\Res_{^gJ\cap K}^{^gJ}\ ^g(\kappa\otimes\sigma)$. If $\sigma'$ may be taken to be non-cuspidal then \emph{(i)} follows from Lemma \ref{type-A-non-cuspidal}. So suppose that $\sigma'$ is cuspidal. Then $\lambda'=\kappa\otimes\sigma'$ is a maximal simple type. Since $\Psi$ is a subrepresentation of 
\[\Ind_{^gJ\cap K}^K\Res_{^gJ\cap K}^{^gJ}\ ^g\lambda'\hookrightarrow\Res_K^G\cInd_J^G\ \lambda',
\]
it follows that $\Psi$ is contained in some irreducible (necessarily supercuspidal) subquotient $\pi'$ of $\cInd_J^G\ \lambda'$, and so $\pi'\simeq\pi\otimes(\chi\circ\det)$ for some $\chi\in\XXX_N(F)$. The representation $\pi$ contains the simple characters $\theta$ and $\theta(\chi\circ\det)$; hence $\theta$ is conjugate to $\theta(\chi\circ\det)$ and, since $e_\AA$ is coprime to $N$, this implies that $\theta=\theta(\chi\circ\det)$, hence $\chi$ is trivial on $\det H^1=\det J^1$.\\

Since $\pi'$ contains a unique archetype, the two representations $\Ind_J^K\ \lambda'$ and $\Ind_J^K\ \lambda\otimes(\chi\circ\det)$ must be isomorphic, and so $I_K(\lambda',\lambda\otimes(\chi\circ\det))\neq\emptyset$. On the other hand, since both $\sigma'$ and $\sigma\otimes(\chi\circ\det)$ are trivial on $J^1$, both $\lambda'$ and $\lambda\otimes(\chi\circ\det)$ become isomorphic to a sum of copies of $\kappa$ upon restriction to $J^1$, and so by \cite[Proposition 5.3.2]{bushnell1993admissible} we see that
\[I_K(\lambda',\lambda\otimes(\chi\circ\det))\subset K\cap J\cdot I_{B^\times}(\lambda'|_{U_{\BB}},\lambda\otimes(\chi\circ\det)|_{U_\BB})\cdot J\subset K\cap J\cdot B^\times \cdot J=J.
\]
So $\lambda'$ and $\lambda\otimes(\chi\circ\det)$ are intertwined by an element of $J$, from which we see that $\lambda'\simeq\lambda\otimes(\chi\circ\det)$. Applying the functor $\Hom_{J^1}(\kappa,-)$, we conclude that $\sigma'\simeq\sigma\otimes(\chi\circ\det)$, which proves \emph{(ii)}.
\end{proof}

This will enable us to perform a simple counting argument in order to show that $\Psi$ may not be of type A. Before completing this argument, we first consider the type B case.

\subsection{Case 2: $\Psi$ is of type B}

In the case that $\Psi$ is of type B we require a different approach, for which we must differentiate between two cases.\\

Suppose first that $k_0(\beta,\AA)\neq -1$. Then $H^1(\beta,\AA)=U_{\BB_\beta}^1H^2(\beta,\AA)$ and so we may view a non-trivial character $\mu$ of $(1+\frak{p}_E)/(1+\frak{p}_E^2)$ as a character of $H^1/H^2$ via the composition
\[\xymatrix{
H^1/H^2\ar[r]^-\sim & U_\BB^1/U_\BB^2\ar[r]^-{\det_B} & (1+\frak{p}_E)/(1+\frak{p}_E^2)\ar[r]^-\mu & \C^\times.
}\]

On the other hand, if $k_0(\beta,\AA)=-1$ then the above approach no longer works. Instead, let $[\AA,n,1,\gamma]$ be a simple stratum equivalent to the pure stratum $[\AA,n,1,\beta]$. Then $\theta\psi_{\beta-\gamma}^{-1}$ is a simple character in $\Cc(\AA,0,\gamma)$.\\

To combine these two cases, we let $\mu$ be as above if $k_0(\beta,\AA)\neq -1$, and let $\mu=\phi_{\beta-\gamma}^{-1}$ otherwise. As noted by Pa\v{s}k\={u}nas during the proofs of \cite[Propositions 7.3,7.16]{paskunas2005unicity}, in each of these two cases we have $\theta\mu=\theta$ on $H^1\cap{}^{g^{-1}}K$. Moreover, in each case $\mu$ is trivial on $H^2$.

\begin{lemma}
The representation $\Psi$ cannot be of type B.
\end{lemma}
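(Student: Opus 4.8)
The plan is to mimic Pa\v{s}k\={u}nas's type B argument from \cite[\S7]{paskunas2005unicity}, adapting it to the $\bar{G}$-setting, with the essential-tameness hypothesis used to control simple characters. The key feature of type B is that $U_\AA\cap{}^{g^{-1}}K$ surjects onto $U_\AA/U_\AA^1$, so the obstruction to $\Psi$ being a genuine archetype cannot be detected at the level of the finite reductive quotient; instead one plays off the character $\mu$ of $H^1/H^2$ introduced just before the statement. Recall that in both cases ($k_0(\beta,\AA)\neq -1$ and $k_0(\beta,\AA)=-1$) one has $\theta\mu=\theta$ on $H^1\cap{}^{g^{-1}}K$ and $\mu$ is trivial on $H^2$. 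First I would show that if $\Psi\hookrightarrow\Ind_{{}^gJ\cap K}^K\Res_{{}^gJ\cap K}^{{}^gJ}\ {}^g\lambda$, then $\Psi$ also embeds in $\Ind_{{}^gJ\cap K}^K\Res_{{}^gJ\cap K}^{{}^gJ}\ {}^g(\lambda\otimes\mu)$: this is because on the subgroup $J\cap{}^{g^{-1}}K$ the twist by $\mu$ changes nothing on $H^1\cap{}^{g^{-1}}K$ (where $\theta\mu=\theta$), and the two representations $\lambda$ and $\lambda\otimes\mu$ agree after restriction to $J\cap{}^{g^{-1}}K$ up to the relevant identification — exactly the mechanism Pa\v{s}k\={u}nas uses. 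Consequently $\Psi$ is contained in an irreducible subquotient $\pi'$ of $\cInd_J^G\ (\lambda\otimes\mu)$.

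Next I would analyze $\pi'$. Since $\lambda\otimes\mu$ restricts to $H^1$ as a multiple of $\theta\mu$, which is a simple character — in $\Cc(\AA,0,\beta)$ when $k_0(\beta,\AA)\neq -1$ (as $\mu$ factors through $\det_B$ and hence $\theta\mu$ lies in the same set by \cite[(3.3.2)]{bushnell1993admissible}-type transfer), and in $\Cc(\AA,0,\gamma)$ when $k_0(\beta,\AA)=-1$ — any such $\pi'$ is supercuspidal and contains this simple character. By Lemma \ref{order-N}, $\Psi$ lies only in representations of the form $\pi\otimes(\chi\circ\det)$ with $\chi\in\XXX_N(F)$; in particular $\pi'\simeq\pi\otimes(\chi\circ\det)$, so $\pi'$ contains both $\theta$ and $\theta(\chi\circ\det)$, forcing (since $e_\AA$ is coprime to $N$, via Proposition on essentially tame simple characters) $\theta(\chi\circ\det)=\theta$, i.e. $\chi$ trivial on $\det H^1$. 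But $\pi'$ also contains the simple character $\theta\mu$ (resp. $\theta\psi_{\beta-\gamma}^{-1}$), and by the uniqueness of simple characters in a supercuspidal (Theorem of \cite{bushnell2013intertwining}) this must be $G$-conjugate to $\theta$. In the $k_0\neq -1$ case this says $\theta\mu$ is conjugate to $\theta$; but $\mu$ is a nontrivial character of $H^1/H^2$ that is detected on $U_\BB^1/U_\BB^2$ via $\det_B$, and I would derive a contradiction exactly as in \cite[Proposition 7.3]{paskunas2005unicity} by examining the restriction of the conjugating relation to $U_\BB^1$, where $\mu$ cannot be conjugated away. In the $k_0=-1$ case, $\theta\psi_{\beta-\gamma}^{-1}\in\Cc(\AA,0,\gamma)$ with $F[\gamma]$ a proper sub-extension, so it genuinely has a different conjugacy class from $\theta\in\Cc(\AA,0,\beta)$ — contradiction as in \cite[Proposition 7.16]{paskunas2005unicity}.

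The main obstacle I anticipate is the first step: carefully justifying that twisting $\lambda$ by $\mu$ does not disturb the embedding of $\Psi$. This requires knowing that $\mu$ extends compatibly to all of ${}^gJ\cap K$ (not just $H^1\cap{}^{g^{-1}}K$), which is where the surjectivity defining type B is used — it guarantees that $J^1\cap{}^{g^{-1}}K$ is large enough that $\kappa$ and $\kappa\otimes(\text{extension of }\mu)$ restrict identically there, after choosing the extension of $\mu$ to be trivial on a complement. One has to track the interaction between $\mu|_{H^1}$, the Heisenberg representation $\eta$, and the $\beta$-extension $\kappa$, using that $\eta\otimes\mu\simeq\eta$ as representations of $J^1$ when $\mu$ is appropriately small (which holds since $\mu$ is trivial on $H^2\supset J^2$ in the relevant range) — this is \cite[(5.1.1)]{bushnell1993admissible} combined with the defining property of $\beta$-extensions. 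Once this bookkeeping is in place, the rest follows the two-case template of Pa\v{s}k\={u}nas with the essential-tameness input replacing his central-character arguments at the one point where a character twist must be shown to fix $\theta$.
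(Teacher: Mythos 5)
Your overall strategy is the paper's: exploit the identity $\theta\mu=\theta$ on $H^1\cap{}^{g^{-1}}K$ to show that some supercuspidal twist of $\pi$ must contain the character $\theta\mu$ (resp.\ $\theta\psi_{\beta-\gamma}^{-1}$), then contradict the uniqueness of the simple character via \cite[Propositions 7.3, 7.16]{paskunas2005unicity} and essential tameness. However, two steps do not hold up. First, your opening move of twisting the whole type $\lambda$ by an extension of $\mu$ is both unnecessary and unsound: the asserted isomorphism $\eta\otimes\mu\simeq\eta$ on $J^1$ cannot hold, since the two sides restrict to $H^1$ as multiples of $\theta\mu$ and $\theta$ respectively, and the non-conjugacy of these two characters is precisely the crux of the argument; moreover $\mu$ (defined via $\det_B$ on $U_\BB^1$, or as $\psi_{\beta-\gamma}^{-1}$) has no natural extension to $J^1$ or to ${}^gJ\cap K$, so the "bookkeeping" you defer is not merely technical. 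The paper sidesteps all of this by descending to the simple character: since $\lambda|_{H^1}$ is a multiple of $\theta$, one has $\Psi\hookrightarrow\Ind_{{}^gH^1\cap K}^K\ {}^g\theta|_{{}^gH^1\cap K}$, and because $\theta\mu=\theta$ on $H^1\cap{}^{g^{-1}}K$ this is literally the same representation as the one built from $\theta\mu$, so $\Psi\hookrightarrow\Res_K^G\cInd_{H^1}^G\ \theta\mu$ with no extension of $\mu$ beyond $H^1$ ever required.

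Second, and more seriously, your middle step contains an unjustified claim that collapses the case analysis. From $\pi'\simeq\pi\otimes(\chi\circ\det)$ you may conclude that $\pi'$ contains the simple character $\theta(\chi\circ\det)$, but not that it contains $\theta$; asserting both, and hence $\theta(\chi\circ\det)=\theta$, is circular — if $\pi'$ contained both $\theta$ and $\theta\mu$ you would already have the contradiction you are trying to establish. The correct deduction is only that $\theta\mu$ is conjugate to $\theta(\chi\circ\det)$, and one must then split into cases. If $\chi$ is trivial on $\det H^1$, one gets $\theta$ conjugate to $\theta\mu$, ruled out by Pa\v{s}k\={u}nas. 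But the case $\chi$ non-trivial on $\det H^1$ — which your proposal never addresses — is where the type B hypothesis actually enters: since $g$ is of type B one has $e_\AA>1$, so $\chi$ is then also non-trivial on $\det H^2$; restricting the conjugacy to $H^2$, where $\mu$ is trivial, gives $\theta|_{H^2}$ conjugate to $\theta(\chi\circ\det)|_{H^2}$, and essential tameness upgrades this to equality, forcing $\chi$ trivial on $\det H^2$, a contradiction. Without this second branch the proof is incomplete.
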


\begin{proof}
Since $\Psi$ is an irreducible subrepresentation of $\Ind_{^gH^1\cap K}^K\ ^g\theta|_{^gH^1\cap K}$ and $\theta\mu=\theta$ on $H^1\cap{}^{g^{-1}}K$, we see that $\Psi$ is also a subrepresentation of $\Ind_{^gH^1\cap K}^K\ ^g(\theta\mu)|_{^gH^1\cap K}$; this latter representation is in turn a subrepresentation of $\Res_K^G\cInd_{H^1}^G\ \theta\mu$. Since any irreducible subquotient of $\cInd_K^G\ \Psi$ is supercuspidal representation of the form $\pi\otimes(\chi\circ\det)$ for some $\chi\in\XXX_N(F)$ by Lemma \ref{order-N}, there exists a supercuspidal representation of this form which contains $\theta\mu$. As a supercuspidal representation contains a unique conjugacy class of simple characters, we see that $\theta(\chi\circ\det)$ is conjugate to $\theta\mu$.\\

If $\chi$ is trivial on $\det H^1$ then $\theta$ is conjugate to $\theta\mu$, which is shown to be impossible during the proof s of \cite[Propositions 7.3, 7.16]{paskunas2005unicity}. So $\chi$ is non-trivial on $\det H^1$. Since $g$ is a type B coset representative, we must have $e_\AA>1$; hence $\chi$ is also non-trivial on $\det H^2$. But since $\mu$ is trivial on $H^2$ we see that $\theta|_{H^2}$ is conjugate to $\theta(\chi\circ\det)|_{H^2}$. As $e_\AA$ is coprime to $N$ these two characters must actually be equal, implying that $\chi$ is trivial on $\det H^2$; this is a contradiction.
\end{proof}

\subsection{Conclusion}

We have seen that $\Psi$ may not be of type B. So suppose for contradiction that $\Psi$ is of type A. By Lemma \ref{type-A-dichotomy}, there are two possibilities. If $\Psi$ is contained in a non-cuspidal irreducible representation of $G$, we immediately obtain a contradiction to Lemma \ref{order-N}. So suppose that the only irreducible representations $\sigma'$ of $J/J^1$ for which $\Psi$ is contained in $\Ind_{^gJ\cap K}^K\Res_{^gJ\cap K}^{^gJ}\ ^g(\kappa\otimes\sigma')$ are cuspidal representations of the form $\sigma'\simeq\sigma\otimes(\chi\circ\det)$ for some $\chi\in\XXX_N(F)$ which is trivial on $\det J^1$. There are at most $\mathrm{gcd}(N,q_F-1)$ such characters $\chi$.\\

We first take care of the simple case where the extension $E/F$ is totally ramified. Then, by \cite[Corollary 6.6]{paskunas2005unicity}, the image $H$ in $J/J^1$ of $J\cap{}^{g^{-1}}K$ is contained in some proper parabolic $P$ subgroup of $J/J^1$. Let $P^\mathrm{op}$ denote the parabolic subgroup opposite to $P$, and let $U$ be its unipotent radical. Then the restriction to $U$ of $\Ind_H^{J/J^1}\ \sigma|_H$ surjects onto $\Ind_{H\cap U}^U\Res_{H\cap U}^{J/J^1}\ \sigma$; this latter representation is isomorphic to a sum of copies of the regular representation of $J/J^1$ since $H$ intersects trivially with $U$. Hence there must exist a non-cuspidal representation of $J/J^1$ which identifies with $\sigma$ upon restriction to $H$; this is a contradiction.\\

So we may assume that $E/F$ is not totally ramified. Since any irreducible representation of $J/J^1$ which becomes isomorphic to $\sigma$ upon restriction to $H$ must be isomorphic to $\sigma\otimes(\chi\circ\det)$ for some $\chi\in\XXX_N(F)$ with $\chi$ trivial on $\det J^1$, such a irreducible representation $\sigma'$ also agrees with $\sigma$ upon restriction to $\mathscr{H}=H\cdot\mathbf{SL}_{N/[E:F]}(\kkk_E)\subset J/J^1$. Write $\Xi=\sigma|_{\mathscr{H}}$. Then $\Ind_{\mathscr{H}}^{J/J^1}\ \Xi$ must split as a sum of representations of the form $\sigma\otimes(\chi\circ\det)$, for $\chi\in\XXX_N(F)$ trivial on $\det J^1$. Our claim is that there are at least $q_F$ distinct irreducible subrepresentations of $\Ind_{\mathscr{H}}^{J/J^1}\ \Xi$; this would contradict the fact that there are at most $\mathrm{gcd}(N,q_F-1)<q_F$ such $\chi$, showing that $\Psi$ may not be of type A.\\

Equivalently, we may show that there are at least $q_F$ distinct irreducible characters in $\Ind_{\mathscr{H}}^{J/J^1}\ \mathds{1}$. Since $\mathscr{H}$ contains the derived subgroup of $J/J^1$, the representation $\Ind_{\mathscr{H}}^{J/J^1}\ \mathds{1}$ splits as a multiplicity-free direct sum of $[J/J^1:\mathscr{H}]$ distinct characters of $J/J^1$. So it suffices for us to show that the index of $\mathscr{H}$ in $J/J^1$ is at least $q_F$.\\

As $E/F$ is not totally ramified, $\kkk_E/\kkk_F$ is a non-trivial extension. Then there exists a proper subextension $\kkk$ of $\kkk_E$ which contains $\kkk_F$ and is of maximal degree among such extensions of $\kkk_F$ such that $H$ contains only $\kkk$-rational points of $J/J^1$ (by combining \cite[Lemma 6.5]{paskunas2005unicity} and \cite[Corollary 6.6]{paskunas2005unicity}). Thus, if $f=f(E/F)$ is the residue class degree of $E/F$ then $\kkk\simeq\F_{q_F^{f-1}}$, and so we may certainly take as a lower bound for $[J/J^1:\mathscr{H}]$ the number
\[\frac{|\mathbf{GL}_{N/[E:F]}(\kkk_E)|}{|\mathbf{GL}_{N/[E:F]}(\kkk)\cdot\mathbf{SL}_{N/[E:F]}(\kkk_E)|}=\frac{|\mathbf{GL}_{N/[E:F]}(\kkk_E)|}{|\mathbf{SL}_{N/[E:F]}(\kkk_E)|}\cdot\left(\frac{\mathbf{GL}_{N/[E:F]}(\kkk)}{\mathbf{SL}_{N/[E:F]}(\kkk)}\right)^{-1}=\frac{q_F^f-1}{q_F^{f-1}-1}.
\]
This is no less than $q_F$.\\

So the representation $\Psi$ may not be of type A. We conclude that $\Psi\simeq\Ind_J^K\ \lambda$, completing the proof of Theorem \ref{extension-to-K}.

\section{The inertial correspondence}

\subsection{The local Langlands correspondence}

We now give a Galois theoretic interpretation of our unicity results, via the local Langlands correspondence. This allows us to completely describe the fibres of an inertial form of the local Langlands correspondence for $\bar{G}$.\\

Let $\bar{F}/F$ be a separable algebraic closure of $F$ with absolute Galois group $\Gal(\bar{F}/F)$, and let $W_F\subset\Gal(\bar{F}/F)$ be the Weil group: this is the pre-image of $\Z$ under the canonical map $\Gal(\bar{F}/F)\rightarrow\Gal(\bar{\kkk}/\kkk)\simeq\hat{\Z}$. Let $I_F=\ker(\Gal(\bar{F}/F)\rightarrow\Gal(\bar{\kkk}/\kkk)$ be the inertia group; this is the maximal compact subgroup of $W_F$. Fix a choice $\Phi$ of geometric Frobenius element in $W_F$, i.e. an element which maps to $-1\in\hat{Z}$ under the above projection.\\

Given a $p$-adic group $\Gg$, denote by $\hat{\Gg}$ its Langlands dual group. In particular, if $\Gg=\GLN(F)$ then $\hat{\Gg}=\GLN(\C)$, and if $\Gg=\SLN(F)$ then $\hat{\Gg}=\mathbf{PGL}_N(\C)$.\\

The local Langlands correspondence for $G$ gives a unique natural bijective correspondence $\rec_G:\Irr_\sc(G)\rightarrow\Ll_\sc(G)$ between the set $\Irr_\sc(G)$ of isomorphism classes of supercuspidal representations of $G$ and the set $\Ll_\sc(G)$ of isomorphism classes of of irreducible representations $W_F\rightarrow\mathbf{GL}_N(\C)$ such that the image of $\Phi$ is semisimple \cite{harris2001shimura}. (Of course, the local Langlands correspondence for $G$ is more general than this; however, we will only be interested in such representations).\\

From this, following \cite{labesse1979sl2,gelbart1982sln}, it is possible to deduce the local Langlands correspondence for $\bar{G}$. Denote by $\Irr_\sc(\bar{G})$ the set of isomorphism classes of irreducible subrepresentations $\bar{\pi}$ of $\pi|_{\bar{G}}$, for $\pi\in\Irr_\sc(G)$, and denote by $\Ll_\sc(\bar{G})$ the set of projective representations $W_F\rightarrow\mathbf{PGL}_N(\C)$ which lift to an element of $\Ll_\sc(G)$. Let $R:\Irr_\sc(G)\rightarrow\Irr_\sc(\bar{G})$ be a map which associates to each $\pi$ an irreducible subrepresentation of $\pi|_{\bar{G}}$. Then there exists a unique surjective, finite-to-one map $\rec_{\bar{G}}:\Irr_\sc(\bar{G})\rightarrow\Ll_\sc(\bar{G})$ such that the following diagram commutes \emph{for all such choices of} $R$:
\[\xymatrix{
\Irr_\sc(G)\ar[r]^-{\rec_G}\ar[d]_R & \Ll_\sc(G)\ar[d]\\
\Irr_\sc(\bar{G})\ar[r]_-{\rec_{\bar{G}}} & \Ll_\sc(\bar{G})
}\]
Here, the map $\Ll_\sc(G)\rightarrow\Ll_\sc(\bar{G})$ is given by composition with the natural projection $\GLN(\C)\rightarrow\mathbf{PGL}_N(\C)$.\\

This map $\rec_{\bar{G}}$ is the local Langlands correspondence for (the supercuspidal representations of) $\bar{G}$. Its finite fibres are the $L$\emph{-packets} in $\Irr_\sc(G)$.

\subsection{Types and $L$-packets}

For $\Gg=G$ or $\bar{G}$, denote by $\Irr_\et(\Gg)$ the subset of $\Irr_\sc(\Gg)$ consisting of essentially tame supercuspidal representations, and denote by $\Ll_\et(\Gg)$ its image under $\rec_\Gg$. Let $\Aa_\et(\Gg)$ denote the set of $[\Gg,\pi]_{\Gg}$-archetypes, for $\pi\in\Irr_\et(\Gg)$. We have already completely understood the relationship between $\Aa_\et(G)$ and $\Aa_\et(\bar{G})$; we now reinterpret this understanding in terms of the local Langlands correspondences for $G$ and $\bar{G}$. The first step is to establish a form of converse to Theorem \ref{extension-to-K}.

\begin{proposition}\label{restriction-to-Kbar}
Let $\pi$ be an essentially tame supercuspidal representation of $G$, and let $(K,\tau)$ be the unique $[G,\pi]_G$-archetype. Let $\bar{\pi}$ be an irreducible subrepresentation of $\pi|_{\bar{G}}$. Then there exists a $g\in G$ and an irreducible component $\bar{\tau}$ of $^g\tau|_{^g\bar{K}}$ such that $({}^g\bar{K},\bar{\tau})$ is a $[\bar{G},\bar{\pi}]_{\bar{G}}$-archetype.
\end{proposition}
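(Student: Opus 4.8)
The plan is to run the argument of Theorem~\ref{extension-to-K} in reverse, using the fact that we already know the shape of $(K,\tau)$. By \cite{paskunas2005unicity} we may write $\tau\simeq\Ind_J^K\ \lambda$ for a maximal simple type $(J,\lambda)$ contained in $\pi$, coming from a simple stratum $[\AA,n,0,\beta]$; after conjugating we may assume $J\subset U_\AA\subset K$. First I would restrict to $\bar{K}$ and apply Mackey's formula to obtain
\[
\Res_{\bar{K}}^K\ \tau=\Res_{\bar{K}}^K\Ind_J^K\ \lambda=\bigoplus_{J\backslash K/\bar{K}}\Ind_{{}^g\bar{J}}^{\bar{K}}\Res_{{}^g\bar{J}}^{{}^gJ}\ {}^g\lambda,
\]
so every irreducible component of $\tau|_{\bar{K}}$ has the form $\Ind_{{}^g\bar{J}}^{\bar{K}}\ \bar\mu$ for some irreducible component $\bar\mu$ of ${}^g\lambda|_{{}^g\bar{J}}$; such a pair $({}^g\bar{J},\bar\mu)$ is a maximal simple type in $\bar{G}$ by the definition of maximal simple types in $\bar{G}$ and the theorem of \cite{bushnell1994sln}. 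Replacing $\pi$ by a $G$-conjugate (which does not change the archetype) I may take $g=1$ throughout.

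Next I would produce a component lying over $\bar\pi$. Since $\bar\pi$ is a component of $\pi|_{\bar{G}}$ and $(J,\lambda)$ is contained in $\pi$, the theorem of \cite{bushnell1994sln} gives an irreducible component $\mu$ of $\lambda|_{\bar{J}}$ such that $(\bar{J},\mu)$ is a maximal simple type contained in $\bar\pi$, i.e. $\bar\pi\simeq\cInd_{\bar{J}}^{\bar{G}}\ \mu$. Set $\bar\tau=\Ind_{\bar{J}}^{\bar{K}}\ \mu$; by the Mackey computation above $\bar\tau$ is a component of $\tau|_{\bar{K}}$, hence of $\pi|_{\bar{K}}$. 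It remains to check that $(\bar{K},\bar\tau)$ is a $[\bar{G},\bar\pi]_{\bar{G}}$-type defined on a maximal compact subgroup. That $\bar{K}=K\cap\bar{G}$ is a maximal compact subgroup of $\bar{G}$ is standard (it is the stabiliser in $\bar{G}$ of the vertex of the building fixed by $K$). For the typicality, I would argue: if an irreducible $\bar\pi'$ of $\bar{G}$ contains $\bar\tau|_{\bar{J}}\supset\mu$ upon restriction to $\bar{J}$, then by Theorem~2.\ref{} (the $\bar{G}$ analogue, \cite{bushnell1994sln}) $\bar\pi'$ contains some maximal simple type conjugate to $(\bar{J},\mu)$, and since $I_{\bar{G}}(\mu)=\bar{J}$ any such conjugate intertwining $\mu$ must equal $(\bar{J},\mu)$ up to $\bar{J}$-conjugacy; thus $\bar\pi'\simeq\cInd_{\bar{J}}^{\bar{G}}\ \mu\simeq\bar\pi$. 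Conversely $\bar\pi$ contains $\bar\tau$ by construction. So $(\bar{K},\bar\tau)$ is a $[\bar{G},\bar\pi]_{\bar{G}}$-type, and by definition its $\bar{G}$-conjugacy class is a $[\bar{G},\bar\pi]_{\bar{G}}$-archetype.

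The main obstacle is verifying typicality cleanly: a priori an irreducible representation of $\bar{G}$ containing $\bar\tau$ need not be supercuspidal, so one must rule out non-supercuspidal $\bar\pi'$. This is handled exactly as in the reasoning preceding Theorem~\ref{SLN-unicity}: since $\bar\tau=\Ind_{\bar{J}}^{\bar{K}}\ \mu$, any $\bar\pi'$ containing $\bar\tau$ contains the maximal simple type $(\bar{J},\mu)$, and a representation of $\bar{G}$ containing a maximal simple type is automatically supercuspidal by \cite{bushnell1994sln}; one then applies intertwining-implies-conjugacy for maximal simple types in $\bar{G}$ together with $I_{\bar{G}}(\mu)=\bar{J}$ to conclude $\bar\pi'\simeq\bar\pi$. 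A secondary point to address is that $\bar\tau$ as constructed depends on the choice of component $\mu$ of $\lambda|_{\bar J}$, but any two such choices are $\bar J$-conjugate hence give $\bar K$-conjugate (indeed equal, after the reduction above) archetypes; this is the same bookkeeping as in the proof of Theorem~\ref{SLN-unicity}(ii)–(iii).
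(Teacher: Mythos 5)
There is a genuine gap, and it sits exactly at the point the proposition's element $g$ is designed to handle. You invoke the restriction theorem quoted from \cite{bushnell1994sln} to conclude that the \emph{given} component $\bar{\pi}$ of $\pi|_{\bar{G}}$ contains a maximal simple type $(\bar{J},\mu)$ with $\mu\subset\lambda|_{\bar{J}}$ for the \emph{fixed} maximal simple type $(J,\lambda)$ with $J\subset K$. That statement is only valid up to replacing $(J,\lambda)$ by a $G$-conjugate maximal simple type contained in $\pi$, and that conjugate need not lie inside $K$; read literally with the fixed $J\subset K$ it is false. Concretely, take $\pi$ a depth-zero supercuspidal of $\GL2(F)$: then $J=K$, $\lambda|_{\bar{J}}$ is typically irreducible, while $\pi|_{\bar{G}}$ has two non-isomorphic components $\bar{\pi}_1=\cInd_{\bar{K}}^{\bar{G}}\mu$ and $\bar{\pi}_2={}^{g}\bar{\pi}_1$ with $g=\mathrm{diag}(\varpi,1)$; the component $\bar{\pi}_2$ contains no constituent of $\lambda|_{\bar{J}}$ (if it did, typicality of $(\bar{J},\mu)$ would force $\bar{\pi}_2\simeq\bar{\pi}_1$), hence no constituent of $\tau|_{\bar{K}}$ at all. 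This is also forced by Theorem \ref{SLN-unicity}(ii),(iv): distinct members of the packet have their unique archetypes on non-conjugate maximal compact subgroups, so the $g=1$ version of Proposition \ref{restriction-to-Kbar}, which is what your argument would prove, cannot hold. (A smaller slip in the same direction: two irreducible components of $\lambda|_{\bar{J}}$ are $J$-conjugate, not $\bar{J}$-conjugate, so the closing "bookkeeping" remark does not repair this.)

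The rest of your argument is sound and close in spirit to the paper's endgame: once one knows $\bar{\pi}\simeq\cInd_{\bar{J}}^{\bar{G}}\mu$ with $\mu\subset\lambda|_{\bar{J}}$ and $\bar{J}\subset\bar{K}$, irreducibility of $\Ind_{\bar{J}}^{\bar{K}}\mu$ and the fact that maximal simple types in $\bar{G}$ are types give typicality quickly. What is missing is the production of $g$: the paper first shows that \emph{any} irreducible representation of $\bar{G}$ containing some constituent of $\tau|_{\bar{K}}$ must occur in $\pi|_{\bar{G}}$ (using that every irreducible subquotient of $\cInd_K^G\tau$ is an unramified twist of $\pi$, since $(K,\tau)$ is a $[G,\pi]_G$-type), hence is a $G$-conjugate of $\bar{\pi}$; it then chooses $g\in G$ carrying such a constituent-bearing component to $\bar{\pi}$ and conjugates the pair $(\bar{K},\bar{\tau}_j)$ accordingly before verifying typicality. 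Your proof needs this step (or an equivalent one) inserted; as written it silently strengthens the statement to $g=1$, which is false.
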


\begin{proof}
Without loss of generality, assume that $\bar{\pi}\simeq\cInd_{\bar{K}}^{\bar{G}}\ \tilde{\mu}$, where $\tilde{\mu}=\cInd_{\bar{J}}^{\bar{K}}\ \mu$ for some maximal simple type $(\bar{J},\mu)$ (if not, replace $\bar{\pi}$ with a $G$-conjugate for which we may do so; clearly the desired result is true for $\bar{\pi}$ if and only if it is true for every $G$-conjugate of $\bar{\pi}$). Let $\tau|_{\bar{K}}=\bigoplus_j\bar{\tau}_j$. We first show that any $\pi'\in\Irr(\bar{G})$ containing some $\bar{\tau}_j$ must appear in the restriction to $\bar{G}$ of $\pi$. We have a non-zero map in
\[\bigoplus_j\Hom_{\bar{K}}(\bar{\tau}_j,\pi'|_{\bar{K}})=\Hom_{\bar{K}}(\tau|_{\bar{K}},\pi'|_{\bar{K}})=\Hom_{\bar{G}}(\cInd_{\bar{K}}^{\bar{G}}\Res_{\bar{K}}^K\ \tau,\pi'),
\]
and so $\pi'$ is a subquotient of $\Res_{\bar{G}}^G\cInd_K^G\ \tau$. Every irreducible subquotient of $\cInd_K^G\ \tau$ is a twist of $\pi$, and hence coincides with $\pi$ upon restriction to $\bar{G}$, and so any irreducible representation $\pi'$ must be of the required form. Hence the possible representations $\pi'$ all lie in a single $G$-conjugacy class of irreducible representations of $\bar{G}$. Let $g\in G$ be such that $^g\pi'\simeq\bar{\pi}$, hence $\pi'\simeq\cInd_{^g\bar{K}}^{\bar{G}}\ ^g\tilde{\mu}$, and choose $j$ so that $\pi'$ contains $\bar{\tau}_j$. We claim that $({}^g\bar{K},{}^g\bar{\tau}_j)$ is the required type.\\

It suffices to show that any $G$-conjugate of $\bar{\pi}$ containing $({}^g\bar{K},{}^g\bar{\tau}_j)$ is isomorphic to $\bar{\pi}$. Suppose that, for some $h\in G$ we have $\Hom_{^g\bar{K}}({}^g\bar{\pi},{}^g\bar{\tau}_j)\neq 0$. The representation $^h\bar{\pi}$ is of the form $^h\bar{\pi}\simeq\cInd_{^h\bar{J}}^{\bar{G}}\ ^h\mu$, and so $^g\bar{\tau}_j$ is induced from some maximal simple type $(\bar{J}',\mu')$, say. So we have
\begin{align*}
0&\neq\Hom_{^g\bar{K}}(\Res_{^g\bar{K}}^{\bar{G}}\ \bar{\pi},{}^g\bar{\tau}_j)\\
&=\Hom_{\bar{J}'}(\Res_{\bar{J}'}^{\bar{G}}\cInd_{^h\bar{J}}^{\bar{G}}\ ^h\mu,\mu')\\
&=\bigoplus_{^h\bar{J}\backslash\bar{G}/\bar{J}'}\Hom_{\bar{J}'}(\cInd_{^{xh}\bar{J}\cap\bar{J}'}^{\bar{J}'}\Res_{^{xh}\bar{J}\cap\bar{J}'}^{^{xh}\bar{J}}\ ^{xh}\mu,\mu')\\
&=\bigoplus_{^h\bar{J}\backslash\bar{G}/\bar{J}'}\Hom_{^{xh}\bar{J}\cap\bar{J}'}(\Res_{^{xh}\bar{J}\cap\bar{J}'}^{^{xh}\bar{J}}\ ^{xh}\mu,\Res_{^{xh}\bar{J}\cap\bar{J}'}^{\bar{J}'}\ \mu').
\end{align*}
So $^h\mu$ and $\mu'$ intertwine in $\bar{G}$, and are therefore $\bar{G}$-conjugate. Hence $\pi'$ is in fact $\bar{G}$-conjugate to $\bar{\pi}$, i.e. $\bar{\pi}\simeq\pi'$ and the result follows.
\end{proof}

\begin{theorem}\label{L-packets}
Let $\pi$ be an essentially tame supercuspidal representation of $G$, and let $(K,\tau)$ be the unique $[G,\pi]_G$-archetype. Let $\Pi$ be the $L$-packet of irreducible subrepresentations of $\pi|_{\bar{G}}$. Then the set of $[\bar{G},\bar{\pi}]_{\bar{G}}$-archetypes for $\bar{\pi}\in \Pi$ is equal to the set of archetypes of the form $({}^g\bar{K},{}^g\bar{\tau})$ for $g\in G$.
\end{theorem}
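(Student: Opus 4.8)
The plan is to prove the claimed set equality by establishing the two inclusions, relying on Proposition \ref{restriction-to-Kbar}, on the classification already obtained in Theorem \ref{SLN-unicity}, and on the Clifford theory relating maximal simple types in $G$ and $\bar G$. Two soft observations will be used throughout: first, conjugation by any $g\in G$ is an inner automorphism, so ${}^g\pi\simeq\pi$, and hence $G$ permutes (transitively, in fact) the finite set $\Pi$ of irreducible subrepresentations of $\pi|_{\bar G}$, carrying each $[\bar G,\bar\pi]_{\bar G}$-archetype to a $[\bar G,{}^g\bar\pi]_{\bar G}$-archetype; second, $\tau$ being irreducible and $K$ normalising $\bar K=K\cap\bar G$, the irreducible constituents of $\tau|_{\bar K}$ form a single $K$-conjugacy class, so it is immaterial whether $\bar\tau$ is a fixed constituent or ranges over all of them.

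For the inclusion $\subseteq$, I would start from an archetype $(\bar K'',\bar\tau'')$ of some $\bar\pi\in\Pi$; Proposition \ref{restriction-to-Kbar} furnishes a $g\in G$ and a constituent $\bar\tau$ of $\tau|_{\bar K}$ with $({}^g\bar K,{}^g\bar\tau)$ a $[\bar G,\bar\pi]_{\bar G}$-archetype, and then Theorem \ref{SLN-unicity}(iii) --- any two archetypes of a fixed $\bar\pi$ are $G$-conjugate --- shows that $(\bar K'',\bar\tau'')$ is $G$-conjugate to $({}^g\bar K,{}^g\bar\tau)$, hence of the asserted form.

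For the inclusion $\supseteq$, by the $G$-equivariance above it suffices to show that $(\bar K,\bar\tau)$ is a $[\bar G,\bar\pi']_{\bar G}$-archetype for some $\bar\pi'\in\Pi$ whenever $\bar\tau$ is an irreducible constituent of $\tau|_{\bar K}$. Writing $\tau\simeq\Ind_J^K\ \lambda$ (Pa\v{s}k\={u}nas) for a maximal simple type $(J,\lambda)$ contained in $\pi$ with $J\subset U_\AA\subset K$, a Mackey decomposition gives
\[\tau|_{\bar K}=\bigoplus_{h\in J\backslash K/\bar K}\Ind_{{}^h\bar J}^{\bar K}\ \Res_{{}^h\bar J}^{{}^hJ}\ {}^h\lambda,\]
and each ${}^h\lambda|_{{}^h\bar J}$ is a sum of maximal simple types $\mu'$ in $\bar G$ with ${}^h\bar J\subset{}^h\bar{U}_\AA\subset\bar K$ (as $h\in K$). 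Thus $\bar\tau$ occurs in $\Ind_{{}^h\bar J}^{\bar K}\ \mu'$ for some such $h$ and $\mu'$; invoking $I_{\bar G}(\mu')={}^h\bar J$ (established inside the proof of Theorem \ref{SLN-unicity}) I would deduce $I_{\bar K}(\mu')={}^h\bar J$, whence $\Ind_{{}^h\bar J}^{\bar K}\ \mu'$ is irreducible, so $\bar\tau\simeq\Ind_{{}^h\bar J}^{\bar K}\ \mu'$ is a $[\bar G,\bar\pi']_{\bar G}$-archetype with $\bar\pi'=\cInd_{{}^h\bar J}^{\bar G}\ \mu'$ irreducible and supercuspidal. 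Finally, since $\lambda$ occurs in $\pi|_J$ and ${}^h\pi\simeq\pi$, the type $\mu'$ occurs in $\pi|_{{}^h\bar J}$, and as $\cInd_{{}^h\bar J}^{\bar G}\ \mu'$ is irreducible, Frobenius reciprocity forces $\bar\pi'$ to be an irreducible subrepresentation of $\pi|_{\bar G}$, i.e. $\bar\pi'\in\Pi$.

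I expect almost all of this to be routine bookkeeping built on the earlier results; the one step with genuine content is recognising that $(\bar K,\bar\tau)$ is an honest $[\bar G,\bar\pi']_{\bar G}$-type and not merely a subrepresentation lying close to one, and this is precisely where the intertwining identity $I_{\bar G}(\mu')={}^h\bar J$ --- and through it the full unicity statement of Theorem \ref{SLN-unicity} --- enters.
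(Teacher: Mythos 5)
Your argument is correct, and it reaches the statement by a route that overlaps with, but rearranges, the paper's. The paper reduces to showing that the union over $\bar\pi\in\Pi$ of the $[\bar G,\bar\pi]_{\bar G}$-types supported on the fixed $\bar K$ coincides with the set of irreducible constituents of $\tau|_{\bar K}$; one inclusion is read off from the structural part of Theorem \ref{SLN-unicity}(i) (archetypes are induced from maximal simple types), and the other is obtained by noting that the constituents of $\tau|_{\bar K}$ are pairwise $K$-conjugate, so that the single witness produced by Proposition \ref{restriction-to-Kbar} suffices to make all of them types for members of $\Pi$. You instead use Proposition \ref{restriction-to-Kbar} together with the $G$-conjugacy of archetypes (Theorem \ref{SLN-unicity}(iii)) for the inclusion $\subseteq$, and for $\supseteq$ you bypass Proposition \ref{restriction-to-Kbar} entirely, arguing directly: Mackey decomposition of $(\Ind_J^K\lambda)|_{\bar K}$, identification of each constituent as $\Ind_{{}^h\bar J}^{\bar K}\mu'$ with $({}^h\bar J,\mu')$ a maximal simple type, irreducibility from $I_{\bar K}(\mu')\subset I_{\bar G}(\mu')={}^h\bar J$, and Frobenius reciprocity to place $\cInd_{{}^h\bar J}^{\bar G}\mu'$ inside $\Pi$. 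Your version is somewhat longer but more self-contained and in fact slightly stronger on that side, since it shows directly that \emph{every} constituent of $\tau|_{\bar K}$ is a type rather than propagating a single witness by $K$-conjugacy; the paper's proof is shorter precisely because it delegates the existence of that witness to Proposition \ref{restriction-to-Kbar} and lets Clifford theory do the rest. Your identification of the irreducibility of $\Ind_{{}^h\bar J}^{\bar K}\mu'$ (via the intertwining computation) as the one step of real content is accurate, and your glossed step that an irreducibly induced type remains a type is justified by the same Frobenius-reciprocity argument the paper uses implicitly.
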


\begin{proof}
We show that the union of the sets of $[\bar{G},\bar{\pi}]_{\bar{G}}$-types of the form $(\bar{K},\bar{\tau})$, as $\bar{\pi}$ ranges over $\Pi$, is equal to the set of irreducible subrepresentations of $\tau|_{\bar{K}}$; the general result then follows easily. Let $(\bar{K},\bar{\tau})$ be such an archetype. By Theorem \ref{SLN-unicity}, $\bar{\tau}$ is of the required form. Conversely, the irreducible subrepresentations of $\tau|_K$ are pairwise $K$-conjugate by Clifford theory, and so if one of them is a type for some element of $\Pi$ then they all must be. By Proposition \ref{restriction-to-Kbar}, at least one of them must be a type for some $\bar{\pi}\in\Pi$.
\end{proof}

\subsection{The inertial correspondence}

For $\Gg=G$ or $\bar{G}$, let $\Ii_\et(\Gg)$ denote the set of representations $I_F\rightarrow\hat{\Gg}$ which are of the form $\varphi|_{I_F}$ for some $\varphi\in\Ll_\et(\Gg)$; we call such representations \emph{essentially tame inertial types}. We begin by recalling the inertial Langlands correspondence for $G$:

\begin{theorem}[The inertial local Langlands correspondence for $\GLN(F)$ {\cite[Corollary 8.2]{paskunas2005unicity}}]
There exists a unique bijective map $\iner_G:\Aa_\et(G)\rightarrow\Ii_\et(G)$ such that, if $R$ is the map which assigns to a supercuspidal representation $\pi$ of $G$ the unique $[G,\pi]_G$-archetype representation of $K$, then the following diagram commutes:
\[\xymatrix{
\Irr_\et(G)\ar[r]^-{\rec_G}\ar[d]_R & \Ll_\et(G)\ar[d]^{\Res_{I_F}^{W_F}}\\
\Aa_\et(G)\ar[r]_-{\iner_G} & \Ii_\et(G)
}\]
\end{theorem}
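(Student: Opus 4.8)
The plan is to exhibit $\iner_G$ as a composite of three bijections, only one of which requires genuine work. On $\Irr_\et(G)$ write $\sim$ for the relation of differing by a twist by $\chi\circ\det$ with $\chi\in\XXX_{\mathrm{nr}}(F)$, and on $\Ll_\et(G)$ write $\sim$ for differing by a twist by an unramified character of $W_F$. The ingredients are: (a) Pa\v{s}k\={u}nas's unicity theorem, which makes $R$ well defined and surjective and from which I would deduce that $R(\pi)=R(\pi')$ if and only if $\pi\sim\pi'$, so that $R$ induces a bijection $\Irr_\et(G)/{\sim}\to\Aa_\et(G)$; (b) the twisting compatibility $\rec_G(\pi\otimes(\chi\circ\det))=\rec_G(\pi)\otimes\chi$ of the local Langlands correspondence for $\GLN(F)$, giving a bijection $\Irr_\et(G)/{\sim}\to\Ll_\et(G)/{\sim}$; and (c) the assertion that $\Res_{I_F}^{W_F}$ has fibres exactly the $\sim$-classes, giving a bijection $\Ll_\et(G)/{\sim}\to\Ii_\et(G)$. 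Then $\iner_G$ is the composite $\Aa_\et(G)\xrightarrow{\sim}\Irr_\et(G)/{\sim}\xrightarrow{\sim}\Ll_\et(G)/{\sim}\xrightarrow{\sim}\Ii_\et(G)$, with the first arrow the inverse of the bijection from (a); unwinding definitions shows the square commutes, and since $R$ is surjective that relation pins down $\iner_G$ uniquely.

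For (a) I would argue: if $(J,\lambda)\subset\pi$ is a maximal simple type then $R(\pi)$ is the conjugacy class of $\Ind_J^K\lambda$; for $\chi\in\XXX_{\mathrm{nr}}(F)$ one has $(\chi\circ\det)|_J=\mathds{1}$ (as $\det J\subset\dedekind^\times$), so $(J,\lambda)$ is also a maximal simple type in $\pi\otimes(\chi\circ\det)$ and hence $R(\pi\otimes(\chi\circ\det))=R(\pi)$, i.e. $R$ is constant on $\sim$-classes. Conversely an archetype $(K,\tau)$ is in particular a $[G,\pi]_G$-type, so the irreducible representations of $G$ whose restriction to $K$ contains $\tau$ are exactly the $\pi\otimes(\chi\circ\det)$; thus $R(\pi)=R(\pi')$ forces $\pi'\sim\pi$. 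Surjectivity of $R$ onto $\Aa_\et(G)$ is immediate from the definition of $\Aa_\et(G)$ combined with unicity. Ingredient (b) is one of the characterising properties of $\rec_G$ for $\GLN(F)$; the one thing to watch is that the normalisations of $\Phi$ and of the Artin reciprocity map be chosen compatibly, so that ``unramified'' on $F^\times$ and on $W_F$ correspond.

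The real content is (c), a piece of Clifford theory for $1\to I_F\to W_F\to\Z\to 1$. One inclusion is free: an unramified $\chi$ has $\chi|_{I_F}=\mathds{1}$, so $(\varphi\otimes\chi)|_{I_F}\simeq\varphi|_{I_F}$. For the converse, take $\varphi_1,\varphi_2\in\Ll_\et(G)$ with $\varphi_1|_{I_F}\simeq\varphi_2|_{I_F}$ and form $D:=\Hom_{I_F}(\varphi_1,\varphi_2)$, a nonzero finite-dimensional $\C$-vector space carrying the invertible operator $T\colon f\mapsto\varphi_2(\Phi)\circ f\circ\varphi_1(\Phi)^{-1}$ (well defined since $I_F\trianglelefteq W_F$ and $W_F=\langle I_F,\Phi\rangle$). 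Choosing $0\neq f\in D$ with $Tf=\alpha f$, $\alpha\in\C^\times$, and letting $\chi$ be the unramified character of $W_F$ with $\chi(\Phi)=\alpha$: the eigenvalue equation makes $f$ intertwine $\varphi_1\otimes\chi$ and $\varphi_2$ at $\Phi$, while $I_F$-equivariance of $f$ together with $\chi|_{I_F}=\mathds{1}$ gives the intertwining on $I_F$; hence $f$ is $W_F$-equivariant, and since $\varphi_2$ (like $\varphi_1\otimes\chi$) is irreducible, Schur's lemma gives $\varphi_2\simeq\varphi_1\otimes\chi$. Twisting by an unramified character preserves irreducibility, semisimplicity of the image of $\Phi$, and essential tameness, so $\varphi_1\otimes\chi\in\Ll_\et(G)$, which proves (c) and incidentally identifies $\Ii_\et(G)$ with $\Ll_\et(G)/{\sim}$.

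I expect (c) --- the Clifford-theoretic identification of the fibres of restriction to $I_F$ --- to be the only substantive point; everything else is bookkeeping about twist orbits once Pa\v{s}k\={u}nas's unicity theorem and the twisting compatibility of $\rec_G$ are granted. The subsidiary point that genuinely requires an argument rather than an assertion is the ``only if'' half of (a), that $R(\pi)=R(\pi')$ forces $\pi\sim\pi'$: without it the recipe $\iner_G(R(\pi)):=\Res_{I_F}^{W_F}\rec_G(\pi)$ is not even well defined and the uniqueness argument, which relies on surjectivity of $R$, collapses.
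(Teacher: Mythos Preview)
The paper does not actually prove this theorem: it is stated as a reformulation of \cite[Corollary 8.2]{paskunas2005unicity}, accompanied only by the remark that the two formulations are trivially equivalent. Your proposal, by contrast, supplies a complete self-contained argument. The three ingredients (a), (b), (c) are exactly the right ones, and your treatment of (c)---the Clifford-theoretic identification of the fibres of $\Res_{I_F}^{W_F}$ on irreducible $W_F$-representations with unramified-twist classes, via an eigenvector of the Frobenius-twist operator on $\Hom_{I_F}(\varphi_1,\varphi_2)$---is the standard and correct one. Your care over the ``only if'' direction of (a) is well placed: it is precisely what makes the recipe $\iner_G(R(\pi))=\Res_{I_F}^{W_F}\rec_G(\pi)$ well defined, though as you note it follows immediately from the defining property of a $[G,\pi]_G$-type. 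Since the paper's own contribution here is merely the repackaging, there is nothing substantive to compare; your argument is a correct proof of a result the paper simply imports. (Incidentally, nothing in your argument uses essential tameness---Pa\v{s}k\={u}nas's unicity holds for all supercuspidals of $\GLN(F)$---so the restriction to $\Aa_\et$, $\Ii_\et$ is cosmetic here, present only for compatibility with the $\SLN$ story.)
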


Note that while the statement of \cite[Corollary 8.2]{paskunas2005unicity} is not stated in this language, it is trivial to show that the two statements are equivalent. It is the above form of the statement which admits a reasonable generalization to $\bar{G}$.\\

As a notational convenience, we transfer some notation to the setting of $L$-parameters and inertial types. Given $\varphi\in\Ii_\et(\bar{G})$, let $\tilde{\varphi}\in\Ll_\et(G)$ be a lift of some extension of $\varphi$ to $W_F$. Write $\ell_\varphi=\mathrm{length}(\rec_G^{-1}(\tilde{\varphi})|_{\bar{G}})$, and $e_\varphi$ for the lattice period of the hereditary order $\AA$ such that $\rec^{-1}(\tilde{\varphi}$ contains a simple character in $\Aa(\AA,0,\beta)$ for some $\beta$.\\

We come to our main result:

\begin{theorem}[The essentially tame inertial Langlands correspondence for $\SLN(F)$]\label{SLN-inertial}
There exists a unique surjective map $\iner_{\bar{G}}:\Aa_\et(\bar{G})\twoheadrightarrow\Ii_\et(\bar{G})$ with finite fibres such that, for \emph{any} map $T$ assigning to a supercuspidal representation $\bar{\pi}$ of $\bar{G}$ one of the $[\bar{G},\bar{\pi}]_{\bar{G}}$-archetypes, the following diagram commutes:
\[\xymatrix{
\Irr_\et(\bar{G})\ar[r]^-{\rec_{\bar{G}}}\ar[d]_T & \Ll_\et(\bar{G})\ar[d]^-{\Res_{I_F}^{W_F}}\\
\Aa_\et(\bar{G})\ar[r]_-{\iner_{\bar{G}}} & \Ii_\et(\bar{G})
}\]
Each of the fibres of $\iner_{\bar{G}}$ consists of the full orbit under $G$-conjugacy of an archetype, with the fibre above an inertial type $\varphi$ being of cardinality $e_\varphi\ell_\varphi$.\\

Moreover, for any map $R$ assigning to each $[G,\pi]_G$-archetype a $[\bar{G},\bar{\pi}]_{\bar{G}}$-archetype, for $\bar{\pi}$ an irreducible subquotient of $\pi|_{\bar{G}}$, there is a commutative diagram
\[\xymatrix{
\Aa_\et(G)\ar[r]^-{\iner_G}\ar[d]_R & \Ii_\et(G)\ar[d]\\
\Aa_\et(\bar{G})\ar[r]_-{\iner_{\bar{G}}} & \Ii_\et(\bar{G})
}\]
where the map $\Ii_\et(G)\rightarrow\Ii_\et(\bar{G})$ is given by composition with the projection $\GLN(\C)\rightarrow\mathbf{PGL}_N(\C)$.
\end{theorem}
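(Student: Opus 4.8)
The plan is to deduce Theorem \ref{SLN-inertial} from the already-established results on archetypes (Theorem \ref{SLN-unicity}, Theorem \ref{L-packets}, Proposition \ref{restriction-to-Kbar}) together with the inertial correspondence for $G$. I would first \emph{define} $\iner_{\bar{G}}$ on an archetype $(\bar{K},\bar{\tau})$ by choosing an enveloping $[G,\pi]_G$-archetype $(K,\tau)$ with $\bar{\tau}\hookrightarrow\Res_{\bar{K}}^K\tau$ (which exists and has $\bar{K}=K\cap\bar{G}$ after $G$-conjugation, by Corollary and Theorem \ref{L-packets}), and then setting $\iner_{\bar{G}}(\bar{K},\bar{\tau})$ to be the image of $\iner_G(K,\tau)\in\Ii_\et(G)$ under $\GLN(\C)\to\mathbf{PGL}_N(\C)$. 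The first task is to check this is well-defined: if two $[G,\pi]_G$-archetypes $(K,\tau)$ and $(K',\tau')$ both envelope $\bar{\tau}$, then the associated supercuspidals $\pi,\pi'$ of $G$ restrict to $\bar{G}$ containing a common $\bar{\pi}$, so $\pi'\simeq\pi\otimes(\chi\circ\det)$ for some $\chi$; twisting $\rec_G$ by $\chi$ alters the $W_F$-parameter only by the rank-one character $\rec_F(\chi)$, which becomes trivial in $\mathbf{PGL}_N(\C)$, and in particular leaves $\varphi|_{I_F}$ unchanged modulo the centre. This pins down $\iner_{\bar{G}}$ and simultaneously makes the lower-right square of the three-square diagram commute by construction.

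Next I would verify commutativity of the main square involving $\rec_{\bar{G}}$ and an arbitrary section $T$. Given $\bar{\pi}\in\Irr_\et(\bar{G})$, pick $\pi\in\Irr_\et(G)$ with $\bar{\pi}\hookrightarrow\pi|_{\bar{G}}$; then $\rec_{\bar{G}}(\bar{\pi})$ is by definition the image of $\rec_G(\pi)$ in $\Ll_\et(\bar{G})$, and $T(\bar{\pi})$ is, after $G$-conjugation, an irreducible constituent of $\Res_{\bar{K}}^K\tau$ where $(K,\tau)=\iner_G^{-1}$-preimage of $\pi$ — this is exactly the content of Proposition \ref{restriction-to-Kbar} and Theorem \ref{L-packets}. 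So $\iner_{\bar{G}}(T(\bar{\pi}))$ is the image in $\Ii_\et(\bar{G})$ of $\iner_G(K,\tau)=\rec_G(\pi)|_{I_F}$, which is precisely $\rec_{\bar{G}}(\bar{\pi})|_{I_F}$. The independence of the choice of $T$ follows because any two choices differ by replacing $\bar{\pi}$ by a $G$-conjugate, and $G$-conjugate archetypes have the same image under $\iner_{\bar{G}}$ (again by the well-definedness argument, since conjugation does not change the enveloping $[G,\pi]_G$-archetype up to the ambiguity already accounted for).

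Finally I would compute the fibres. Fix $\varphi\in\Ii_\et(\bar{G})$ and a lift $\tilde\varphi\in\Ll_\et(G)$; the $G$-parameters restricting to $\varphi$ on $I_F$ and lying in $\Ll_\et(G)$ are exactly the twists $\tilde\varphi\otimes\chi$ with $\chi\in\XXX_N(F)$, which via $\rec_G$ correspond to the $e_\varphi\cdot(\text{something})$ — more precisely, the fibre of $\iner_G$ above $\tilde\varphi|_{I_F}$ already has a known size, and its image covers all $G$-archetypes enveloping the $L$-packet $\Pi=\rec_{\bar{G}}^{-1}(\varphi\text{-class})$; by Theorem \ref{L-packets} the $\bar{G}$-archetypes above $\varphi$ are exactly the $G$-orbit of a single archetype $(\bar{K},\bar{\tau})$, and counting that orbit against the stabiliser gives, after combining Theorem \ref{SLN-unicity}(iv) (there are $e_{\bar\pi}=e_\varphi$ archetypes per $\bar\pi$) with $|\Pi|=\ell_\varphi$ (so that the orbit breaks into $\ell_\varphi$ isomorphism classes of $\bar\pi$, each contributing $e_\varphi$ archetypes, but with the $G$-action identifying the archetypes across the packet appropriately), the fibre cardinality $e_\varphi\ell_\varphi$. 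The surjectivity of $\iner_{\bar{G}}$ is immediate since $\iner_G$ is surjective and the vertical maps on parameters are surjective. I expect the genuine obstacle to be the fibre count: one must be careful that the $G$-conjugacy orbit of a $\bar{G}$-archetype does not collapse — i.e. that distinct $\bar\pi\in\Pi$ genuinely contribute distinct archetypes and that the $e_\varphi$ archetypes of a single $\bar\pi$ are not further identified by elements of $G$ normalising the relevant data — which requires tracking the precise stabiliser of $(\bar K,\bar\tau)$ in $G$ modulo $\bar G\cdot Z(G)$ and comparing it against the structure of the simple stratum, exactly the kind of bookkeeping carried out in the proof of Theorem \ref{SLN-unicity}(iv).
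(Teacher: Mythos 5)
Your overall strategy is the same as the paper's: the paper defines $\iner_{\bar{G}}$ as $\Res_{I_F}^{W_F}\circ\rec_{\bar{G}}\circ S$, where $S$ sends an archetype $(\bar{K},\bar{\tau})$ to the (irreducible, supercuspidal) representation $\cInd_{\bar{K}}^{\bar{G}}\,\bar{\tau}$; this is the intrinsic form of your definition and sidesteps your well-definedness check with enveloping $[G,\pi]_G$-archetypes, while your verification of the commutativity of both squares via Proposition \ref{restriction-to-Kbar} and Theorem \ref{L-packets} is exactly how the paper argues. Those parts of your proposal are fine.

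The genuine gap is the fibre count, which you explicitly leave unresolved and, more importantly, mislocate: no stabiliser analysis of $(\bar{K},\bar{\tau})$ in $G$ modulo $\bar{G}\cdot Z(G)$ is needed, because the fibre is a set of \emph{archetypes}, i.e.\ of $\bar{G}$-conjugacy classes, not of $G$-orbits. An archetype is by definition a $[\bar{G},\bar{\pi}]_{\bar{G}}$-type, and since $\bar{G}$ has no non-trivial unramified characters, the only irreducible representation of $\bar{G}$ containing it is $\bar{\pi}$ itself (equivalently, $\cInd_{\bar{K}}^{\bar{G}}\,\bar{\tau}\simeq\bar{\pi}$ recovers $\bar{\pi}$); hence archetypes attached to distinct members of the packet $\Pi=\rec_{\bar{G}}^{-1}(\tilde{\varphi})$ are automatically distinct, and the fibre is the \emph{disjoint} union, over the $\ell_\varphi$ members of $\Pi$, of their archetype sets, each of cardinality $e_\varphi$ by Theorem \ref{SLN-unicity}(iv) (the members of $\Pi$ being $G$-conjugate, they all have the same count). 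This gives $e_\varphi\ell_\varphi$ at once. The ``collapse'' you worry about -- elements of $G$ identifying archetypes across the packet or among the $e_\varphi$ archetypes of a single $\bar{\pi}$ -- is not a threat to the count: $G$-conjugacy permutes the archetypes in the fibre (indeed, by Theorem \ref{SLN-unicity}(iii) and Theorem \ref{L-packets} the fibre is a single $G$-orbit, which is the first assertion about the fibres), but two distinct $\bar{G}$-conjugacy classes being $G$-conjugate does not merge them as elements of $\Aa_\et(\bar{G})$. Also, your side remark that the $G$-parameters restricting to $\varphi$ on $I_F$ are exactly the twists $\tilde{\varphi}\otimes\chi$ with $\chi\in\XXX_N(F)$ is not needed and is stated imprecisely; what the argument actually requires is only that $\bar{\pi}$ contains an archetype in the fibre if and only if $\bar{\pi}\in\Pi$, if and only if $\rec_{\bar{G}}(\bar{\pi})|_{I_F}\simeq\varphi$, which follows from Theorem \ref{L-packets} together with the typicality of the archetypes.
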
 

\begin{proof}
Let $S$ be any map which assigns to each archetype $(\bar{K},\bar{\tau})$ in $\Aa_\et(\bar{G})$ the irreducible subrepresentation $\bar{\pi}=\cInd_{\bar{K}}^{\bar{G}}\ \bar{\tau}$. Let $\iner_{\bar{G}}$ denote the composition $\Res_{I_F}^{W_F}\circ\rec_{\bar{G}}\circ S$. Let $\varphi\in\Ii_\et(\bar{G})$, and let $\tilde{\varphi}$ be an extension of $\varphi$ to $W_F$. Let $\Pi=\rec^{-1}(\tilde{\varphi})$. Then $\Pi=\{\bar{\pi}_i\}$ is an $L$-packet of supercuspidal representations of $\bar{G}$ consisting of the set of irreducible subrepresentations of some supercuspidal representation $\pi$ of $G$. By Theorem \ref{L-packets}, the finite set $\{(\mathscr{K}_i,\bar{\tau}_i\}$ of $[\bar{G},\bar{\pi}_i]_{\bar{G}}$-archetypes, as $\bar{\pi}_i$ ranges through $\Pi$ is precisely the set of archetypes given by the irreducible subrepresentations of $(^g\bar{K},{}^g\tau|_{{}^g\bar{K}})$, for $g\in G$. As each $\bar{\pi}_i$ is an archetype, it follows that for all irreducible representations $\bar{\pi}$ of $\bar{G}$, we have that $\bar{\pi}$ contains some $\bar{\tau}_i$ upon restriction to $\mathscr{K}_i$ if and only if $\bar{\pi}\in\Pi$, if and only if $\rec(\bar{\pi})|_{I_F}\simeq\varphi$. So the map $\iner_{\bar{G}}$ is well-defined, and is the unique map map making the first diagram commute.\\

We now consider the fibres of $\iner_{\bar{G}}$. Let $\varphi\in\Ii_\et(\bar{G})$. Each of the archetypes in $\iner^{-1}(\varphi)$ is represented by a representation of the form $\bar{\tau}=\Ind_{\bar{J}}^{\bar{K}}\ \mu$, for some maximal simple type $(\bar{J},\mu)$ contained in an essentially tame supercuspidal representation, and some maximal compact subgroup $\bar{K}$ of $\bar{G}$ which contains $\bar{J}$. Moreover, any $G$-conjugate of $(\bar{K},\bar{\tau})$ is also contained in the fibre above $\varphi$. Conversely, we have seen that any two archetypes in the same fibre of $\iner_{\bar{G}}$ are $G$-conjugate.\\

So it remains only to calculate the cardinality of $\iner_{\bar{G}}^{-1}(\varphi)$. Let $\tilde{\varphi}\in\Ll_\et(\bar{G})$ be an extension of $\tilde{\varphi}$, and let $\bar{\pi}$ be contained in the $L$-packet $\rec_{\bar{G}}^{-1}(\tilde{\varphi})$. The cardinality of this $L$-packet is $\mathrm{length}(\pi|_{\bar{G}})$, where $\pi$ is any representation of $G$ such that $\bar{\pi}\hookrightarrow\pi|_{\bar{G}}$, i.e. $\#\rec_{\bar{G}}^{-1}(\tilde{\varphi})=\ell_\varphi$. So the fibre $\iner_{\bar{G}}^{-1}(\varphi)$ is equal to the disjoint union of the sets of archetypes contained in each of the $\ell_\varphi$ elements of $\rec^{-1}(\tilde{\varphi})$. Since any two elements of this $L$-packets are $G$-conjugate, any two elements admit the same number of archetypes, which is $e_\varphi$ by Theorem \ref{SLN-unicity}. So we conclude that $\#\iner_{\bar{G}}^{-1}(\varphi)=e_\varphi\ell_\varphi$.\\

The commutativity of the second diagram is simply a translation of Theorem \ref{L-packets} into the language of the inertial correspondence.
\end{proof}

\bibliographystyle{amsalpha}
\addcontentsline{toc}{section}{References}
\bibliography{Archetypes}

\end{document}